\title{Local Martingale and Pathwise Solutions for an Abstract Fluids Model}
\author{Arnaud Debussche$^{\sharp}$, Nathan Glatt-Holtz$^\flat$ and Roger Temam$^\flat$}
\date{}
\definecolor{Red}{rgb}{0.7,0,0.1}
\definecolor{Green}{rgb}{0,0.7,0}
\numberwithin{equation}{section}
\newtheorem{Thm}{Theorem}[section]
\newtheorem{Lem}{Lemma}[section]
\newtheorem{Prop}{Proposition}[section]
\newtheorem{Def}{Definition}[section]
\newtheorem{Rmk}{Remark}[section]
\newcommand{\pd}[1]{\partial_{#1}}
\newcommand{\indFn}[1]{1 \! \! 1_{#1}}
\newcommand{\E}{\mathbb{E}}
\newcommand{\Prb}{\mathbb{P}}
\newcommand{\brak}[1]{\langle #1 \rangle}
\newcommand{\arxiv}[1]{#1}
\newcommand{\physD}[1]{}
\begin{document}

\markboth{A. Debussche, N. Glatt-Holtz and R. Temam}
{Local Martingale and Pathwise Solutions for an Abstract Fluids Model}

\markboth{A. Debussche, N. Glatt-Holtz and R. Temam}
{Local Martingale and Pathwise Solutions for an Abstract Fluids Model}

\maketitle

\vskip-4mm

\centerline{\footnotesize{\it $^{\sharp}$IRMAR and ENS Cachan Bretagne }}
\vskip-1mm
\centerline{\footnotesize{\it 35170 Bruz, France}}

\vskip2mm

\centerline{\footnotesize{\it $^\flat$The Institute for Scientific Computing and Applied Mathematics} }
\vskip-1mm
\centerline{\footnotesize{\it Indiana University, Bloomington, IN 47405, USA}}

\begin{center}
\large
\date{\today}
\end{center}

\vskip4mm

\begin{abstract}
  We establish the existence and uniqueness of both local martingale
  and local pathwise solutions of an abstract nonlinear stochastic evolution
  system.  The primary application of this abstract framework is to
  infer the local existence of strong, pathwise solutions to the $3D$
  primitive equations of the oceans and atmosphere forced by a
  nonlinear multiplicative white noise.  Instead of developing our 
  results specifically for the $3D$ primitive equations we choose
  to develop them in a slightly abstract framework which covers 
  many related forms of these equations (atmosphere, oceans, coupled
  atmosphere-ocean, on the sphere, on the $\beta$-plane approximation
  etc and the incompressible Navier-Stokes equations).  In applications,
  all of the details are given for the $\beta$-plane approximation of the oceans
  equations.
\end{abstract}

\section{Introduction}
\label{sec:introduction}

In this work we develop a local existence theory for a class of
abstract stochastic evolution systems of the form:
\begin{equation}\label{eq:AbsEq}
  dU + ( AU + B(U,U) + F(U)) dt = \sigma(U)dW, \quad U(0) = U_0,
\end{equation}
which includes the primitive equations of the ocean as explained 
below.  We choose to develop our setting in a somewhat abstract
framework so that our results may cover some closely related 
equations such as the primitive equations of the atmosphere 
or the coupled atmospheric/oceanic system, equations with chemistry,
equations on the sphere or $\beta$-plane approximations,
etc. We do not expand further on these latter 
applications in this article in order to avoid excessive developments.
The stochastic primitive equations of the ocean have been
previously studied in
\cite{GlattHoltzZiane1, Glattholtz1, GlattHoltzTemam1,
EwaldPetcuTemam, GuoHuang} but none of these works 
address the full 3-d system in the context of
a nonlinear multiplicative noise.

The deterministic primitive equations are widely seen as a fundamental 
model for large scale oceanic and atmospheric systems.  For the oceans
they are derived from the fully compressible Navier-Stokes equations
combined with the Boussinesq and hydrostatic approximations.
See e.g. \cite{Pedlosky} for further physical background.  Given the growing
importance of probabilistic methods in sub-grid scale parameterization,
geophysicists currently devote significant attention to stochastic forms
of the equations of geophysical fluid dynamics; 
see e.g. \cite{EwaldPenland2, 
PenlandSardeshmukh, PenlandEwald1,
Rose1, LeslieQuarini1, MasonThomson,
BernerShuttsLeutbecherPalmer, ZidikheriFrederiksen}.  From the 
mathematical point of view the stochastic primitive equations have
been considered for a two dimensional version of the equations
in \cite{GlattHoltzZiane1,EwaldPetcuTemam,GlattHoltzTemam2}.
In three space dimensions \cite{GuoHuang} has addressed the
case of an additive noise.  In this situation the outcome 
$\omega$ (in the underlying probability space $\Omega$) may be
treated as a parameter in the problem.  Furthermore the methods 
in \cite{GuoHuang} do not allow for physically realistic boundary 
conditions which we are able to treat here with our methods.

Let us recall at this point that the theory of the related stochastic
Navier-Stokes equations have undergone substantial developments; 
see e.g.  \cite{BensoussanTemam, Viot1, Cruzeiro1,
CapinskiGatarek, FlandoliGatarek1, 
ZabczykDaPrato2,
BensoussanFrehse, Breckner, BrzezniakPeszat, FlandoliRomito1, DaPratoDebussche,
MikuleviciusRozovskii2, MikuleviciusRozovskii4, Shirikyan1, FlandoliRomito, GlattHoltzZiane2}.
However to emphasize the notable differences between the stochastic
Navier-Stokes Equations and the primitive equations we recall that
the deterministic Primitive equations are known to be well 
posed in space dimension three 
(see \cite{CaoTiti, Kobelkov, ZianeKukavica}).
This result is not known for the Navier-Stokes 
Equations and is the object of the famous Clay problem.  
On the other hand the primitive equations are 
technically more involved than the Navier- Stokes equations.
For further background concerning the mathematical theory
for the deterministic primitive equations see e.g. the review article
\cite{PetcuTemamZiane}
and the references therein.  The stochastic primitive equations
that we consider in this work are described in detail in 
Section~\ref{sec:examples} below.

In the theory of stochastic evolution equations two notions of solutions are
typically considered namely pathwise (or strong) solutions and martingale (or
weak) solutions.  In the former notion the driving noise is fixed in advance
while in the later case these underlying stochastic elements enter as
an unknown in the problem.    In this work we will consider both notions
for \eqref{eq:AbsEq} and illuminate the relationship between these two
types of solutions. The classical Yamada-Watanabe theorem from
finite dimensional stochastic analysis says that pathwise solutions
exist whenever martingale solutions may be found and pathwise
uniqueness of solutions holds (see e.g. \cite{PrevotRockner}).  Similar
results have later been established along more elementary lines, in
\cite{GyongyKrylov1}, using a simple characterization of convergence
in probability (see Proposition~\ref{thm:GyongyKry} below).  This characterization
may also be employed in the infinite dimensional context and is used 
below to pass from the case of martingale to pathwise solutions.  In
any case to the  best of our knowledge, no one has previously 
established such a `Yamada-Watanabe' type result for the Primitive 
Equations (or for that matter for the $3D$ Navier-Stokes equations). 

The exposition is organized as follows.  In Section~\ref{sec:math-fram}
we make precise the set-up of the abstract problem \eqref{eq:AbsEq}
and briefly recall some relevant mathematical preliminaries from probability
theory and functional analysis.  A Galerkin scheme for
\eqref{eq:AbsEq} is considered in Section~\ref{sec:unif-estim-galerk}.
By making use of an appropriate cut-off function in the formulation
of the equations we are able to establish uniform a-priori estimates 
for the corresponding sequence of approximate solutions.  In
Section~\ref{sec:local-existence} we outline the compactness arguments
that lead to the local existence of martingale solutions.  We turn then
to pathwise solutions in Section~\ref{sec:localmax-pathwisesolns}.
Here the first step is to establish conditions for pathwise uniqueness.
We then revisit the compactness methods described in the previous
section now making use of the additional criteria for convergence in
probability.  In Section~\ref{sec:examples}
we apply the abstract results to the stochastic primitive
equations.  In the
final Sections, \ref{sec:passToLim}, \ref{sec:append-proof-conv}, we
provide the technical details of the passage to the limit from
compactness of Galerkin approximations established earlier in
Sections~\ref{sec:local-existence}, \ref{sec:localmax-pathwisesolns}.

\section{Mathematical Framework}
\label{sec:math-fram}

In this section we set up the abstract system \eqref{eq:AbsEq}, making precise 
the conditions on each of the terms and reviewing the notions of both martingale
and pathwise solutions.  We also recall various results from abstract 
probability theory and functional analysis which play a fundamental role
in the analysis.

\subsection{Abstract Spaces and Operators}
\label{sec:absSpOps}

We begin by fixing a pair of separable Hilbert spaces $H \supset V$,
and assume that the embedding is dense and compact.  We
may thus define the Gelfand inclusions $V \subset H \subset V'$,
where $V'$ is the dual of $V$, relative to $H$.
We denote by $(\cdot,\cdot)$, $|\cdot|$, $((\cdot,\cdot))$ and
$\|\cdot\|$ the norms and inner products of $H$, and  $V$
respectively. The duality product between $V'$ and $V$ is written
$\brak{\cdot,\cdot}$. 

\subsubsection{The Principal Linear Operator}
\label{sec:princ-line-oper}

We now give the precise assumptions on each of the terms appearing
in (\ref{eq:AbsEq}).   They are of course designed to include the case 
of the primitive equations of the ocean, \eqref{eq:PE3DBasic}-
\eqref{eq:basicInitialCond} as we explain below
in Section~\ref{sec:prim-equat-ocean}.
We begin with the linear term supposing that  
$A: D(A) \subset H \rightarrow H$ 
is an unbounded, densely defined, bijective, operator such that
$(AU, U^{\sharp}) = ((U, U^{\sharp}))$  for all  $U, U^{\sharp} \in D(A)$.
As such we see that $A$ is symmetric and may be understood
as bounded operator from $V$ to $V'$ with the duality product given by
\begin{equation}\label{eq:vdualRelA}
  \brak{AU, U^{\sharp}} = ((U, U^{\sharp})), \quad 
  \textrm{ for all } U \in V.
\end{equation}
We further see that $A^{-1}$ is continuous as a map from $H$ into $V$.
Since by assumption $V \subset \subset H$, it follows that $A^{-1}$ is
compact on $H$.  We may also deduce from the given assumptions on $A$
that $A^{-1}$ is symmetric. Applying the classical theory for
symmetric compact operators we infer the existence of a complete orthonormal basis
$\{\Phi_k\}_{k \geq 0}$ for $H$ of eigenfunctions of $A$ so that the associated
sequence of eigenvalues $\{\lambda_k\}_{k \geq 0}$ form an increasing unbounded
sequence.  For the Galerkin scheme below we introduce the finite
dimensional spaces
\begin{displaymath}
  H_n = span \{\Phi_1, \ldots, \Phi_n  \}
\end{displaymath}
and let $P_{n}, Q_{n} = I - P_{n}$ be the projection operators onto $H_n$ and its
orthogonal complement.

Using the basis $\{ \Phi_k\}$ we may also define the fractional powers of
$A$ which are also relevant to the analysis. Given $\alpha > 0,$ take
\begin{displaymath}
  D(A^\alpha) = \left\{
    U \in H:  \sum_k \lambda_k^{2\alpha} |U_k|^2 < \infty \right\}
\end{displaymath}
where $U_k = (U, \Phi_k)$.  On this set we may define $A^\alpha$ according to
\begin{displaymath}
  A^\alpha U  = \sum_{k} \lambda_k^{\alpha} U_k \Phi_{k},
  \quad
  \textrm{ for } U = \sum_{k} U_{k} \Phi_{k}.
\end{displaymath}
Accordingly we equip $D(A^\alpha)$ with the Hilbertian norm
\begin{displaymath}
  |U|_\alpha := |A^\alpha U| = \left( \sum_k \lambda_k^{2\alpha} |U_k|^2 \right)^{1/2}.
\end{displaymath}
Classically we have the following generalized Poincar\'{e} and 
inverse Poincar\'{e} inequalities:
\begin{equation}\label{eq:decompEstimates}
  \begin{split}
     |P_n U |_{\alpha_2} \leq \lambda^{\alpha_2 - \alpha_1}_n |P_n U|_{\alpha_1}, 
     \quad \quad
     |Q_n U |_{\alpha_1} \leq 
      \frac{1}{\lambda^{\alpha_2 - \alpha_1}_n} |Q_n U|_{\alpha_2},\\
  \end{split}
\end{equation}
valid for any $\alpha_1 < \alpha_2$.

Note that as in \cite{Temam4} one may verify that
$D(A^\beta) \subset D(A^\alpha)$ is a compact embedding whenever $\beta
> \alpha$.  Using (\ref{eq:vdualRelA}), 
one may readily verify that $D(A^{1/2}) =
V$ and that $\| U\|= | U |_{1/2}$ for all $U \in V$.  Thus, it is
clear that, in particular, the embedding $D(A) \subset V$ is compact. 

\subsubsection{The Nonlinear Terms}
\label{sec:princ-nonl-term}

We turn next to $B$ which we assume to be a bilinear form mapping $V
\times D(A)$ continuously to $V'$ and $D(A) \times D(A)$
continuously to $H$.  Furthermore we assume the following properties
for $B$:
\begin{equation}\label{eq:Bprop1}
  \begin{split}
  \langle B(U, U^\sharp), U^\sharp \rangle= 0
  \quad \textrm{ for all } U \in V, U^\sharp \in D(A),    
  \end{split}
\end{equation}
\begin{equation}\label{eq:Bprop2}
  |\langle B(U, U^\sharp), U^\flat \rangle|
  \leq c_0 \| U \| | A U^\sharp | \| U^\flat\|
  \quad \textrm{ for all } U, U^\flat \in V, U^\sharp \in D(A),
\end{equation}
\begin{equation}\label{eq:Bprop3}
  \begin{split}
 |\langle B(U, U^\sharp), U^\flat \rangle|
  &\leq c_0 \| U \|^{1/2} | A U |^{1/2} 
  \| U^\sharp \|^{1/2} |A U^\sharp |^{1/2} | U^\flat | \quad
  \textrm{ for all } U, U^\sharp \in D(A), U^\flat \in H.
  \end{split}
\end{equation}
Note that, for brevity of notation, we will sometimes write $B(U)$ for
$B(U,U)$.

We next describe the conditions imposed for $F$ and $\sigma$.  To this 
end we introduce some further notations.
Given any pair of Banach spaces $\mathcal{X}$ and $\mathcal{Y}$ we denote by
$Bnd_u(\mathcal{X}, \mathcal{Y})$, the collection of all continuous mappings 
\begin{displaymath}
  \Psi: [0, \infty) \times \mathcal{X} \rightarrow \mathcal{Y}
\end{displaymath}
so that
\begin{displaymath}
    \| \Psi(x,t) \|_{\mathcal{Y}} \leq c(1 + \|x\|_{\mathcal{X}}), \quad x
    \in \mathcal{X}, t \geq 0\\
\end{displaymath}
where the numerical constant $c$ may be chosen
independently of $t$.  If, in addition,
\begin{displaymath}
   \| \Psi(x,t) - \Psi(y, t)\|_{\mathcal{Y}} \leq c \|x - y \|_{\mathcal{X}},
   \quad x, y \in \mathcal{X}, t \geq 0\\
\end{displaymath}
we say $\Psi$ is in $Lip_u(\mathcal{X}, \mathcal{Y})$.

For $F$ we assume that
\begin{equation}\label{eq:Fmappings}
  \begin{split}
   F: [0, \infty) \times V \rightarrow H.
  \end{split}
\end{equation}
In Section~\ref{sec:local-existence} we assume that
\begin{equation}\label{eq:sublinerConF}
  F \in Bnd_u(V,H).
\end{equation}
Further on in Section~\ref{sec:localmax-pathwisesolns}
\begin{equation}\label{eq:lipConF}
  F \in Lip_u(V,H).
\end{equation}

Similar conditions are also imposed on $\sigma$.  We shall assume throughout this work that
\begin{equation}\label{eq:sigMappings}
  \sigma: [0, \infty) \times H \rightarrow L_2(\mathfrak{U},H).
\end{equation}
Here $\mathfrak{U}$ is an auxiliary Hilbert space and $L_{2}(\mathfrak{U}, H)$ is the
collection of Hilbert-Schmidt operators between $\mathfrak{U}$ and $H$.   See 
Section~\ref{sec:stochasticBackground} for further remarks.
For the case of martingale solutions considered in Section~\ref{sec:local-existence}, we assume that
\begin{equation}\label{eq:BndCondSig}
  \sigma \in Bnd_u(H, L_2(\mathfrak{U}, H)) 
   \cap Bnd_u(V, L_2(\mathfrak{U}, V)) 
   \cap Bnd_u(D(A), L_2(\mathfrak{U}, D(A))).
\end{equation}
On the other hand for pathwise solutions, Section~\ref{sec:localmax-pathwisesolns}, we posit 
\begin{equation}\label{eq:lipCondSig}
  \sigma \in Lip_u(H, L_2(\mathfrak{U}, H)) 
   \cap Lip_u(V, L_2(\mathfrak{U}, V)) 
   \cap Lip_u(D(A), L_2(\mathfrak{U}, D(A))).
\end{equation}

\subsection{The Stochastic Framework}
\label{sec:stochasticBackground}

In order to define the remaining terms in (\ref{eq:AbsEq}), that is 
$\sigma(U) dW$ we must
recall some basic notions and notations from stochastic analysis.  For
more theoretical background on the general theory of stochastic
evolution systems we mention the classical book \cite{ZabczykDaPrato1}
or the more recent treatment in \cite{PrevotRockner}.

To begin we fix a stochastic basis $\mathcal{S}
:= (\Omega, \mathcal{F}, \{\mathcal{F}_t\}_{t \geq 0}, \mathbb{P},
\{W^k\}_{k \geq 1})$,
 that is a filtered probability space with $\{W^k\}_{k
  \geq 1}$ a sequence of independent standard $1$-d Brownian motions
relative to $\mathcal{F}_t$.  In order to avoid unnecessary
complications below we may assume that
$\mathcal{F}_t$ is complete and right continuous (see \cite{ZabczykDaPrato1}).  
Fix a separable
Hilbert space $\mathfrak{U}$ with an associated orthonormal basis $\{e_k\}_{k \geq 1}$. We
may formally define $W$ by taking $W = \sum_k W_k
e_k$.  As such $W$ is a 'cylindrical Brownian' motion evolving over
$\mathfrak{U}$.  

\arxiv{
We next recall some basic definitions and properties of spaces of
Hilbert-Schmidt operators.  For this purpose we suppose that $X$ and
$\tilde{X}$ are any pair of separable Hilbert spaces with the associated 
norms and  inner
products given by $| \cdot |_X$, $| \cdot |_{\tilde{X}}$ and 
$\langle \cdot, \cdot \rangle_{X}$ $\langle \cdot,
\cdot \rangle_{\tilde{X}}$, respectively.  We denote by
\begin{displaymath}
 L_2(\mathfrak{U}, X) = \left\{ R \in \mathcal{L}(\mathfrak{U},X):
   \sum_k |Re_k|^2_{X} < \infty 
   \right\},
\end{displaymath}
the collection of Hilbert-Schmidt operators from $\mathfrak{U}$ to
$X$. By endowing this collection with the inner product
  $\langle R, S \rangle_{L_2(\mathfrak{U}, X)} = \sum_k \langle R e_k, S e_k \rangle_X$,
we may consider $L_2(\mathfrak{U},X)$ as itself being a Hilbert space.
Note that when $R \in L_{2} (\mathfrak{U}, X)$ we shall often denote
$R_k = R e_k$ and we may therefore associate $R$ with the sequence 
$\{R_k\}_{k \geq 1}$.
One may readily show that if $R^{(1)} \in L_2(\mathfrak{U}, X)$ and
$R^{(2)} \in L(X,\tilde{X})$ then indeed $R^{(2)}R^{(1)} \in
L_2(\mathfrak{U}, \tilde{X})$.  

We also define the auxilary space $\mathfrak{U}_0 \supset \mathfrak{U}$ via
\begin{displaymath}
  \mathfrak{U}_0 
  := \left\{ v = \sum_{k \geq 0} \alpha_{k} e_{k} : 
  \sum_k {\alpha^{2}_{k}}{k^2} < \infty \right\},
\end{displaymath}
endowed with the norm
  $| v |_{\mathfrak{U}_{0}}^{2} := \sum_{k} \frac{\alpha^{2}_{k}}{k^{2}}, \quad v = \sum_k \alpha_{k} e_{k}$.
Note that the embedding of $\mathfrak{U} \subset \mathfrak{U}_0$ 
is Hilbert-Schmidt. Moreover, using standard Martingale arguments 
with the fact that each $W_{k}$ is almost surely continuous 
(see \cite{ZabczykDaPrato1}) we have that, for almost every $\omega \in \Omega$,
  $W(\omega) \in C([0,T], \mathfrak{U}_0)$.
}

\physD{Fix a second separable Hilbert space $X$ with its associated
norm denoted by $| \cdot |_{X}$.  Let $L_{2}(\mathfrak{U}, X)$ be the collection
of Hilbert-Schmidt operators between $\mathfrak{U}$ and $X$, endowed with
the usual norm.}
Given an $X$-valued predictable\footnote{For a given  stochastic basis
$\mathcal{S}$, let $\Phi = \Omega \times [0,\infty)$ and take
$\mathcal{G}$ to be the $\sigma$-algebra generated by sets of the
form
\begin{displaymath}
    (s,t] \times F, \quad 0 \leq s< t< \infty, F \in \mathcal{F}_s;
    \quad \quad
    \{0\} \times F, \quad F \in \mathcal{F}_0.
\end{displaymath}
Recall that an $X$ valued process $U$ is called predictable (with
respect to the stochastic basis $\mathcal{S}$) if it is measurable
from $(\Phi,\mathcal{G})$ into $(X, \mathcal{B}(X))$,
$\mathcal{B}(X)$ being the family of Borel sets of $X$.}
process
$G \in L^{2}(\Omega; L^{2}_{loc}
([0, \infty),L_{2}(\mathfrak{U}, X)))$ 
one may define the 
(It\={o}) stochastic integral
\begin{displaymath}
   M_{t} := \int_{0}^{t} G dW = \sum_k \int_0^t G_k dW_k,
   \physD{ \quad \textrm{ where } G_{k} = G e_{k}, }
\end{displaymath}
as an element in $\mathcal{M}^2_X$, that is the space of all
$X$-valued square integrable martingales (see \cite[Section
2.2, 2.3]{PrevotRockner}).  As such $\{M_t \}_{t \geq 0}$ has
many desirable properties.  Most notably for the analysis here,
the Burkholder-Davis-Gundy inequality holds which in the present
context takes the form,
\begin{equation}\label{eq:BDG}
  \E \left(\sup_{t \in [0,T]} \left| \int_0^t G dW  \right|_X^r \right)
  \leq c \E \left(
    \int_0^T |G|_{L_2(\mathfrak{U}, X)}^2 dt \right)^{r/2},
\end{equation}
valid for any $r \geq  1$.  Here $c$ is an absolute constant depending
only on $r$.
We shall also make use of a variation of this inequality, established
in \cite{FlandoliGatarek1} which applies to fractional derivatives of
$M_t$.   For $p \geq 2$ and $\alpha \in [0,1/2)$ we have
\begin{equation}\label{eq:BDGfrac}
  \E \left(  
     \left| \int_0^{t} G dW \right|_{W^{\alpha, p}([0,T];X)}^p \right)
     \leq c \E \left(
    \int_0^T |G|_{L_2(\mathfrak{U}, X)}^p dt \right),
\end{equation}
which holds for all $X$-valued predictable $G \in L^{p}(\Omega; L^{p}_{loc}([0,
\infty),L_{2}(\mathfrak{U}, X)))$.  For the convenience of the reader,
we shall recall the definition of
the spaces $W^{\alpha, p}([0,T], X)$ in
Section~\ref{sec:compEmbedding} below.

\begin{Rmk}\label{rmk:PossibleNoiseStructures}
Under the assumptions, \eqref{eq:BndCondSig}, \eqref{eq:lipCondSig}, 
on $\sigma$, the
stochastic integral $t \mapsto \int_0^t \sigma(U) dW$ may be shown to
be well defined (in the It\={o} sense), taking values in $H$ whenever
$U \in L^{2}(\Omega, L^{2}_{loc}([0,\infty); H))$ and is predictable.
Such terms may be seen
to cover a wide class of examples, including but not
limited to the classical cases of additive and linear 
multiplicative noise, projections of the solution in any
direction, and directional forcings of Lipschitz functionals 
of the solution.  See e.g. \cite{GlattHoltzZiane2} for further details.
\end{Rmk}

In Section~\ref{sec:append-proof-conv} we establish
the following convergence theorem for stochastic integrals.  This result
will be used below to facilitate the passage to the limit in the
Galerkin scheme.  The statement and
proof generalizes ideas found in \cite{Bensoussan1}.
\begin{Lem}\label{thm:ConvThm}
  Let $(\Omega, \mathcal{F}, \mathbb{P})$ be a fixed probability space,
  $X$ a separable Hilbert space. Consider a sequence of stochastic
  bases $\mathcal{S}_n = (\Omega, \mathcal{F},\{\mathcal{F}_t^n \}_{t
    \geq 0}, \mathbb{P}, W^n)$, that is a sequence so that each $W^n$ is
  cylindrical Brownian motion (over $\mathfrak{U}$) with respect to
  $\mathcal{F}_t^n$. Assume that $\{G^{n}\}_{n \geq 1}$ are a
  collection of $X$-valued $\mathcal{F}_t^n$ predictable processes
  such that $G^{n} \in L^2([0,T], L_2(\mathfrak{U}, X))$ a.s.
 Finally consider $\mathcal{S} = (\Omega, \mathcal{F},\{\mathcal{F}_t\}_{t
    \geq 0}, \mathbb{P}, W)$ and $G  \in L^2([0,T],
  L_2(\mathfrak{U}, X))$, which is $\mathcal{F}_t$ predictable. If  
  \begin{subequations}\label{eq:ConvAsmp}
    \begin{gather}
      W^{n} \rightarrow W  \quad
      \textrm{ in probability in }
      C([0,T], \mathfrak{U}_0),
      \label{eq:ConvAsmpNoise}\\
      G^{n} \rightarrow G \quad
      \textrm{ in probability in }
      L^2([0,T]; L_2(\mathfrak{U}, X)),
      \label{eq:ConvAsmpStochIntegrand}
    \end{gather}
  \end{subequations}
  then
  \begin{equation}\label{eq:StochasticVitaliConc}
    \int_0^t G^n dW^n \rightarrow \int_0^t G dW
    \quad \textrm{ in probability in } L^2([0,T]; X).
  \end{equation}
  \end{Lem}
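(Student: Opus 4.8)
The plan is to reduce the convergence of the stochastic integrals to the convergence of a fixed (deterministic-in-law) stochastic integral by using the Skorokhod/Jakubowski-type representation implicit in the hypotheses, but in fact the cleanest route here — since all the bases live on the same probability space $(\Omega, \mathcal F, \mathbb P)$ — is to use the characterization of convergence in probability via subsequences (Proposition~\ref{thm:GyongyKry}): it suffices to show that from every subsequence of $(\mathcal S_n, G^n, W^n)$ one may extract a further subsequence along which $\int_0^\cdot G^n dW^n \to \int_0^\cdot G dW$ in $L^2([0,T];X)$ almost surely, or at least in probability. Along such a subsequence, \eqref{eq:ConvAsmpNoise}--\eqref{eq:ConvAsmpStochIntegrand} upgrade to almost sure convergence, so I may assume $W^n \to W$ a.s.\ in $C([0,T],\mathfrak U_0)$ and $G^n \to G$ a.s.\ in $L^2([0,T];L_2(\mathfrak U, X))$.

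First I would handle the integrands. Split $\int_0^t G^n dW^n - \int_0^t G\, dW = \int_0^t (G^n - G)\, dW^n + \left(\int_0^t G\, dW^n - \int_0^t G\, dW\right)$. For the first term, I would like to apply the Burkholder--Davis--Gundy inequality \eqref{eq:BDG}, but $G^n-G$ need only be in $L^2([0,T];L_2(\mathfrak U,X))$ almost surely, not in expectation, so I first localize: introduce stopping times $\tau_M^n = \inf\{t : \int_0^t |G^n - G|_{L_2}^2\, ds \geq M\} \wedge T$ (predictable-integrand truncation), on which BDG applies and gives $\mathbb E \sup_{t\le T}|\int_0^{t\wedge \tau_M^n}(G^n-G)dW^n|_X^2 \le c\, \mathbb E \int_0^{\tau_M^n}|G^n-G|_{L_2}^2 ds$. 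Since $G^n \to G$ in $L^2([0,T];L_2(\mathfrak U,X))$ in probability, the right side tends to $0$ after we first choose $M$ large so that $\mathbb P(\tau_M^n < T)$ is uniformly small (which follows from tightness of $\int_0^T |G^n|_{L_2}^2$, itself a consequence of convergence in probability). This standard localization argument shows $\int_0^\cdot (G^n-G)dW^n \to 0$ in probability in $C([0,T];X)$, hence in $L^2([0,T];X)$.

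The genuinely delicate term is $\int_0^t G\, dW^n - \int_0^t G\, dW$ with $G$ \emph{fixed} but the driving noise varying: one cannot naively subtract because $\int_0^t G\, dW^n$ is defined relative to the filtration $\mathcal F^n_t$ and $\int_0^t G\, dW$ relative to $\mathcal F_t$, and there is no single filtration making both integrands adapted. This is exactly where the argument ``generalizes ideas found in \cite{Bensoussan1}'': I would approximate $G$ by a sequence of simple (elementary) predictable processes $G^{(j)} = \sum_i \mathbf 1_{(t_i, t_{i+1}]} g_i$ with $g_i$ being $\mathcal F_{t_i}$-measurable $L_2(\mathfrak U, X)$-valued simple functions, so that $G^{(j)} \to G$ in $L^2([0,T];L_2(\mathfrak U,X))$, $\mathbb P$-a.s. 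For each fixed $j$, $\int_0^t G^{(j)} dW^n = \sum_i g_i (W^n_{t_{i+1}\wedge t} - W^n_{t_i \wedge t})$ is a genuine finite sum, and since $W^n \to W$ a.s.\ in $C([0,T],\mathfrak U_0)$ and each $g_i$ has finite-dimensional range (so $g_i: \mathfrak U_0 \to X$ is bounded on the relevant subspace), we get $\int_0^\cdot G^{(j)} dW^n \to \int_0^\cdot G^{(j)} dW$ a.s.\ in $C([0,T];X)$ for each fixed $j$. The errors $\int_0^\cdot (G - G^{(j)})dW^n$ and $\int_0^\cdot(G-G^{(j)})dW$ are controlled uniformly in $n$ by the same BDG-plus-localization estimate as in the previous paragraph (the localization constant $M$ can be taken uniform in $n$ by tightness), and sending $j\to\infty$ after $n\to\infty$ closes the argument via a standard $3\varepsilon$ estimate.

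The main obstacle, and the technical heart of the proof, is precisely the mismatch of filtrations in the middle term: making rigorous the claim that the elementary stochastic integrals converge requires care that the partition points $t_i$ can be chosen as continuity points and that the finite-rank operators $g_i$ transfer the convergence from the weak topology of $\mathfrak U_0$ to the norm topology of $X$; and the uniform-in-$n$ control of the tail errors $\int_0^\cdot(G-G^{(j)})dW^n$ demands that the localizing stopping times and the resulting $L^2(\Omega)$ bounds be chosen independently of $n$, which in turn rests on extracting uniform integrability/tightness from the hypothesis of convergence in probability of the integrands. Everything else — the BDG estimates, the subsequence principle, the $3\varepsilon$ bookkeeping — is routine.
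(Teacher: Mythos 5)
Your diagnosis of the central difficulty --- the mismatch of filtrations --- is exactly right, but your proposed route through it does not survive that difficulty. The decomposition
$\int_0^t G^n\,dW^n-\int_0^t G\,dW=\int_0^t(G^n-G)\,dW^n+\bigl(\int_0^t G\,dW^n-\int_0^t G\,dW\bigr)$
is ill-posed at the outset: $G$ is only $\mathcal{F}_t$-predictable, and $W^n$ is a Brownian motion only with respect to $\mathcal{F}^n_t$, so neither $\int_0^t G\,dW^n$ nor $\int_0^t(G^n-G)\,dW^n$ is a defined It\^{o} integral (one cannot simply pass to $\sigma(\mathcal{F}_t\cup\mathcal{F}^n_t)$, since nothing guarantees that $W^n$ remains a martingale for the enlarged filtration). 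The simple-process approximation does not repair this: the finite sums $\sum_i g_i(W^n_{t_{i+1}\wedge t}-W^n_{t_i\wedge t})$ are perfectly good random variables and do converge pathwise using \eqref{eq:ConvAsmpNoise}, but the tail error $\int_0^{\cdot}(G-G^{(j)})\,dW^n$ that your $3\varepsilon$ argument must control uniformly in $n$ is again a stochastic integral of an $\mathcal{F}_t$-predictable (not $\mathcal{F}^n_t$-predictable) integrand against $W^n$, hence undefined; no BDG or localization estimate is available for it. This is a genuine gap, not a technicality awaiting rigor.

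The paper's proof is organized so that every It\^{o} integral which appears has its integrand adapted to the filtration of its own integrator. One truncates in the mode index $k$ and reduces to the three claims in \eqref{eq:SufficentCond3TermsConvThm}; the tails and the ``diagonal'' error terms $\int_0^t(G^n_k-\mathcal{R}_\rho(G^n_{k}))\,dW^n_k$ and $\int_0^t(\mathcal{R}_\rho(G_{k})-G_k)\,dW_k$ are controlled in probability via the Lenglart-type inequality \eqref{eq:GilSkInEq}, which plays exactly the role of your BDG-plus-localization step (that part of your argument is sound). The cross term is then handled not by simple-function approximation but by the smoothing operator $\mathcal{R}_\rho$ of \eqref{eq:smoothingFunctional}: since $t\mapsto\mathcal{R}_\rho(G_{k})(t)$ is absolutely continuous, integration by parts converts $\int_0^t\mathcal{R}_\rho(G^n_{k})\,dW^n_k-\int_0^t\mathcal{R}_\rho(G_{k})\,dW_k$ into pathwise products such as $\mathcal{R}_\rho(G_k)W_k-\mathcal{R}_\rho(G^n_{k})W^n_k$ together with Lebesgue integrals of $G_kW_k-G^n_kW^n_k$, to which \eqref{eq:ConvAsmpNoise} and \eqref{eq:ConvAsmpStochIntegrand} apply directly. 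If you wish to retain a simple-function flavor, you must likewise arrange that the approximation error is only ever integrated against the noise to which it is adapted; the operator $\mathcal{R}_\rho$ is precisely a device for achieving this.
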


\physD{
In order to pass from martingale to pathwise solutions we make essential
use of an elementary but powerful characterization of convergence in 
probability as given in \cite{GyongyKrylov1}.
Suppose that $\{Y_n \}_{n \geq 0}$ is a sequence of $X$-valued random
variables on a probability space $(\Omega, \mathcal{F}, \mathbb{P})$.
Let $\{\mu_{n,m}\}_{n, m \geq 1}$ be the collection of joint laws of
$\{Y_n \}_{n \geq 1}$, that is
\begin{displaymath}
    \mu_{n,m}(E) := \Prb( (Y_n, Y_m) \in E),
    \quad E \in \mathcal{B}( X \times X ).
\end{displaymath}
The result from \cite{GyongyKrylov1} is the following:
\begin{Prop}\label{thm:GyongyKry}
  A sequence of $X$ valued random variables $\{Y_n\}_{n \geq 0}$ converges 
  in probability if and only if for
  every subsequence of joint probabilities laws, $\{\mu_{n_k, m_k} \}_{ k \geq 0}$, 
  there exists a further subsequence which converges weakly to a probability 
  measure $\mu$ such that
  \begin{equation}\label{eq:diagonalCondGK}
    \mu( \{(x,y) \in X \times X: x = y  \}) = 1.
  \end{equation}
\end{Prop}
}

Finally we describe the assumptions for the initial condition
$U_0$ which may be random in general.  In 
Section~\ref{sec:local-existence}, where we consider
the case of Martingale solutions, since the stochastic basis is an
unknown of the problem we are only able to
specify $U_0$ as an initial probability measure $\mu_0$ on $V$ such that:
\begin{equation}\label{eq:U0CondMartingale}
  \int_V \|U\|^q d \mu_0(U) < \infty.
\end{equation}
Here $q \geq 2$ will be specified below, see Theorem~\ref{thm:MainResult}
as well as Lemma~\ref{thm:UniformEstimateGalerkin}.
On the other hand for pathwise solutions where the stochastic basis
$\mathcal{S}$ is fixed we assume that relative to this basis $U_0$ is
a $V$ valued random variable such that
\begin{equation}\label{eq:U0Cond1Pathwise}
  U_0 \in L^2(\Omega;V) 
  \textrm{ and is } \mathcal{F}_0 \textrm{ measurable.}
\end{equation}

\subsection{Definition of Solutions}
\label{sec:defOfSolns}

We next give the definitions of local and global
solutions of \eqref{eq:AbsEq} for both Martingale and Pathwise
Solutions.
\begin{Def}[Local and Global Martingale Solutions]
  \label{def:MartingaleSoln}
  Suppose $\mu_0$ is probability measure on $V$ satisfying
  \eqref{eq:U0CondMartingale} with $q \geq 8$ 
  and assume that (\ref{eq:sublinerConF})
  and (\ref{eq:BndCondSig})
  hold for $F$ and $\sigma$ respectively.
  \begin{itemize}
  \item[(i)] A triple $(\mathcal{S}, U, \tau)$ is a \emph{local
      Martingale solution} if $\mathcal{S} = (\Omega, \mathcal{F}, \{\mathcal{F}_t\}_{t \geq
    0}, \mathbb{P}, W)$ is a stochastic basis, $\tau$ is stopping time relative to
  $\mathcal{F}_t$ and $U(\cdot) = U(\cdot \wedge \tau): \Omega \times
  [0,\infty) \rightarrow V$ is an $\mathcal{F}_{t}$ adapted process such that:
  \begin{equation}\label{eq:regularityCondMG}
      \begin{split}
        U(\cdot \wedge \tau) \in  L^2(\Omega; C ([0,\infty); V)),\\
        U \indFn{t \leq \tau} \in  L^2(\Omega;
        L^2_{loc}([0,\infty); D(A)));
      \end{split}
  \end{equation}
  the law of $U(0)$ is $\mu_0$ i.e. $\mu_0(E) = \Prb( U(0) \in E)$, for all Borel subsets $E$ of $V$, and
 $U$ satisfies for every $t \geq 0$,
  \begin{equation}\label{eq:spdeAbstracMG}
      \begin{split}
        U(t \wedge \tau)
           + \int_0^{t \wedge \tau} (A U +  B(U) + F(U)) ds
           = U(0)
            + \int_0^{t \wedge \tau} \sigma(U) dW,
     \end{split}
  \end{equation}
  with the equality understood in $H$.
  \item[(ii)] We say that the (Martingale) solution $(\mathcal{S}, U,
  \tau)$ is global if $\tau = \infty$, $\Omega$-a.s.
  \end{itemize}
\end{Def}

We next define pathwise solutions of \eqref{eq:AbsEq} where the
stochastic basis is fixed in advance.
\begin{Def}[Local, Maximal and Global Pathwise Solutions]
  \label{def:Pathwisesolns}
  Let $\mathcal{S} = (\Omega, \mathcal{F}, \{\mathcal{F}_t\}_{t \geq
    0}, \mathbb{P}, W)$ be a fixed stochastic basis and suppose that
  $U_0$ is an $V$ valued random variable (relative to $\mathcal{S}$)
  satisfying (\ref{eq:U0Cond1Pathwise}).  Assume that $F$ satisfies
  (\ref{eq:lipConF}) and that
  (\ref{eq:lipCondSig}) holds for $\sigma$.
  \begin{itemize}
  \item[(i)] A pair $(U, \tau)$ is a \emph{a local pathwise solution} of
    (\ref{eq:AbsEq}) if $\tau$ is a strictly positive stopping
    time and $U(\cdot \wedge \tau)$ is an $\mathcal{F}_{t}$-adapted
    process in $V$ so that (relative to the fixed basis $\mathcal{S}$)
    \eqref{eq:regularityCondMG}, \eqref{eq:spdeAbstracMG}
    hold.
  \item[(ii)] Pathwise solutions of \eqref{eq:AbsEq} are said to be
   \emph{(pathwise) unique} up to a stopping time $\tau > 0$ if given
   any pair of pathwise solutions $(U^1,\tau)$ and $(U^2,\tau)$ which
   coincide at $t= 0$ on a subset $\tilde{\Omega}$ of $\Omega$,
   $\tilde{\Omega} = \{U^1(0) = U^2(0)\}$, then
   \begin{displaymath}
    \Prb \left( \indFn{\tilde{\Omega}} ( U^1(t \wedge \tau) - U^2(t \wedge
      \tau))  = 0; \forall t \geq 0 \right) = 1.
   \end{displaymath}
\item[(iii)] Suppose that $\{\tau_n\}_{n\geq 1}$ is a strictly increasing sequence
    of stopping times converging to a (possibly infinite) stopping time
    $\xi$ and assume that $U$ is a predictable process in $H$.  We
   say that the triple $(U,\xi, \{\tau_n\}_{n\geq 1} )$ is \emph{a maximal
     strong solution} if $(U, \tau_n)$ is a local strong solution for
   each $n$ and
  \begin{equation}\label{eq:FiniteTimeBlowUp}
      \sup_{t \in [0, \xi]} \|U\|^2 + \int_0^{\xi} |A U|^2 ds = \infty
  \end{equation}
  almost surely on the set $\{\xi < \infty\}$.  If, moreover
  \begin{equation}\label{eq:blowUpannouncement}
      \sup_{t \in [0, \tau_n]} \|U\|^2 + \int_0^{\tau_n} |A U|^2 ds = n,
  \end{equation}
  for almost every $\omega \in \{ \xi < \infty \}$ then the sequence $\tau_n$ 
  is said to \emph{announce} any finite time blow up.
  \item[(iii)] If $(U, \xi)$ is a maximal strong solution and $\xi =
    \infty$ a.s. then we say that the solution is global.
  \end{itemize}
\end{Def}

We may now state precisely the main results in the work:
\begin{Thm}\label{thm:MainResult}
  \mbox{}
  \begin{itemize}
  \item[(i)]  Suppose that $\mu_0$ satisfies \eqref{eq:U0CondMartingale},
    for $q \geq 8$ and that $F$ and $\sigma$ maintain
    \eqref{eq:sublinerConF},  \eqref{eq:BndCondSig} respectively.
    Then there exists a local Martingale solution $(\mathcal{S}, U, \tau)$ of (\ref{eq:AbsEq}).
  \item[(ii)] Assume that, relatively to a fixed stochastic basis $\mathcal{S}$, 
    $U_0$ satisfies \eqref{eq:U0Cond1Pathwise} and that 
    $F$ and $\sigma$ fulfill \eqref{eq:lipConF} and \eqref{eq:lipCondSig}.
    Then there exists a unique, maximal pathwise solution, $(U,\xi, \{\tau_n\}_{n\geq 1} )$,
    of (\ref{eq:AbsEq}).
 \end{itemize}
\end{Thm}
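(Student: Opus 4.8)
The plan is to establish both parts by combining the truncated Galerkin scheme of Section~\ref{sec:unif-estim-galerk} with compactness arguments, supplementing the second part with pathwise uniqueness and the characterization of convergence in probability of Proposition~\ref{thm:GyongyKry}. First I would fix $R>0$, pick a smooth nonincreasing cut-off $\theta_R:[0,\infty)\to[0,1]$ equal to $1$ on $[0,R]$ and to $0$ on $[2R,\infty)$, and study on $H_n$ the truncated system
\[
  dU_n+\big(AU_n+\theta_R(\mathcal{E}_n)P_nB(U_n,U_n)+P_nF(U_n)\big)\,dt=P_n\sigma(U_n)\,dW,\qquad
  \mathcal{E}_n(t)=\sup_{s\le t}\|U_n(s)\|^2+\int_0^t|AU_n|^2\,ds,
\]
with $U_n(0)=P_nU_0$. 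The point of cutting off the \emph{energy functional} $\mathcal{E}_n$ (and not $\|U_n\|$ alone) is that wherever $\theta_R(\mathcal{E}_n)\neq 0$ both $\sup_{s\le t}\|U_n(s)\|^2$ and $\int_0^t|AU_n|^2\,ds$ are bounded by $2R$, so the whole nonlinearity becomes tame as a functional of the path and the finite-dimensional (functional) SDE has a unique global solution. The next step is to apply It\^o's formula to $(1+\|U_n\|^2)^{q/2}$, using \eqref{eq:Bprop1} to kill the top-order contribution of $B$, the factor $\theta_R(\mathcal E_n)$ together with \eqref{eq:Bprop2}--\eqref{eq:Bprop3} to control the remaining nonlinear terms, \eqref{eq:sublinerConF} (resp.\ \eqref{eq:lipConF}) for $F$, and \eqref{eq:BDG} with \eqref{eq:BndCondSig} (resp.\ \eqref{eq:lipCondSig}) for the stochastic terms, obtaining
\[
  \E\Big(\sup_{t\le T}\|U_n\|^q+\int_0^T\|U_n\|^{q-2}|AU_n|^2\,ds\Big)\le C,\qquad C\ \text{independent of }n;
\]
this is Lemma~\ref{thm:UniformEstimateGalerkin}, and it is here that $q\ge 8$ is used. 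A companion bound in $W^{\alpha,p}([0,T];H)$ ($\alpha<1/2$) then follows from the integral form of the equation together with \eqref{eq:BDGfrac}.

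For part (i), I would combine these estimates with the compact embeddings recalled in Section~\ref{sec:compEmbedding} (Aubin--Lions, Arzel\`a--Ascoli) to obtain tightness of the laws of $(U_n,W)$ on a suitable path space, then invoke Prokhorov and the Skorokhod representation theorem to produce, on a new probability space, a stochastic basis $\widetilde{\mathcal S}$, copies $\widetilde U_n$ converging a.s.\ to some $\widetilde U$, and cylindrical Brownian motions $\widetilde W_n\to\widetilde W$ in $C([0,T];\mathfrak U_0)$. Passing to the limit in the truncated equation is routine for the linear terms; the nonlinear term is handled using the strong $L^2([0,T];V)$-convergence of $\widetilde U_n$ together with \eqref{eq:Bprop2}--\eqref{eq:Bprop3}, the stochastic integral is handled by Lemma~\ref{thm:ConvThm}, and the a~priori bounds pass to $\widetilde U$ by lower semicontinuity. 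Thus $\widetilde U$ solves the \emph{truncated} equation with $\widetilde U(0)\sim\mu_0$; setting $\tau:=\tau_R:=\inf\{t\ge0:\mathcal{E}(t)\ge R\}$ with $\mathcal E$ the energy functional of $\widetilde U$, the cut-off is inactive on $[0,\tau_R]$, so $(\widetilde{\mathcal S},\widetilde U,\tau_R)$ is a local martingale solution with the regularity \eqref{eq:regularityCondMG} inherited from the bounds. Splitting $\mu_0$ according to the size of $\|\cdot\|$ and reassembling the resulting pieces (on a common basis) finally arranges $\Prb(\tau>0)=1$, proving (i).

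For part (ii) the first task is pathwise uniqueness. Given two local pathwise solutions $(U^1,\tau),(U^2,\tau)$ agreeing at $t=0$ on $\widetilde\Omega$, set $\rho=U^1-U^2$, write $B(U^1)-B(U^2)=B(\rho,U^1)+B(U^2,\rho)$, and apply It\^o to $\indFn{\widetilde\Omega}\|\rho\|^2$ up to the stopping times $\sigma_K$ that bound $\|U^i\|$ and $\int_0^{\cdot}|AU^i|^2$ at level $K$; estimating $\brak{B(\rho,U^1),A\rho}$ and $\brak{B(U^2,\rho),A\rho}$ by \eqref{eq:Bprop3} (Young's inequality absorbing the resulting $|A\rho|^2$ into the dissipation) and the $F$- and $\sigma$-differences by \eqref{eq:lipConF}, \eqref{eq:lipCondSig}, one arrives at $d\|\rho\|^2\le g(t)\|\rho\|^2\,dt+dM_t$ on $[0,\tau\wedge\sigma_K]$ with $g\in L^1_{loc}$ a.s.\ ($g\sim\|U^1\|^2|AU^1|^2+\|U^2\|^2|AU^2|^2+1$) and $M$ a local martingale, so a stochastic Gronwall argument gives $\rho\equiv0$ on $[0,\tau\wedge\sigma_K]$ and $K\to\infty$ finishes it. For local existence, one may reduce to $U_0\in L^\infty(\Omega;V)$ by splitting $\Omega$ along the $\mathcal F_0$-sets $\{\|U_0\|\le m\}$ and gluing via uniqueness; then I would re-run the truncated Galerkin scheme on the fixed basis $\mathcal S$ and apply Proposition~\ref{thm:GyongyKry}: for any subsequence of the joint laws of $(U_n,U_m)$, tightness and Skorokhod produce limits $\widetilde U,\widetilde{\widetilde U}$ driven by a common Brownian motion, both solving the truncated equation with the same initial datum (since $P_nU_0\to U_0$ in $V$), whence pathwise uniqueness of the \emph{truncated} equation --- proved as above, with extra care for the path-dependent cut-off $\theta_R(\mathcal E^1)-\theta_R(\mathcal E^2)$ (handled by Lipschitz continuity of $\theta_R$, the bound $|\mathcal E^1-\mathcal E^2|\le C_K(\sup_{s\le t}\|\rho\|+(\int_0^t|A\rho|^2)^{1/2})$, and a short-time localization exploiting $\int_0^t\|U^2\|^2|AU^2|^2\,ds\to0$ as $t\to0$) --- forces $\widetilde U=\widetilde{\widetilde U}$; the limit law is therefore diagonal and $U_n\to U$ in probability in the path space. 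The limit $U$ is $\mathcal F_t$-adapted, solves the truncated equation on $\mathcal S$, and stopping at $\tau_R$ yields a local pathwise solution $(U,\tau)$ with $\tau>0$ a.s. Finally, running this construction with $R=n$ and $\tau_n:=\inf\{t:\sup_{s\le t}\|U\|^2+\int_0^t|AU|^2\ge n\}$, pathwise uniqueness makes the local solutions consistent, so they define a single $U$ on $[0,\xi)$ with $\xi:=\lim_n\tau_n$; by construction \eqref{eq:FiniteTimeBlowUp} and \eqref{eq:blowUpannouncement} hold on $\{\xi<\infty\}$, and uniqueness of $(U,\xi,\{\tau_n\})$ is inherited from pathwise uniqueness, proving (ii). The technical details of the limit passages are deferred to Sections~\ref{sec:passToLim} and \ref{sec:append-proof-conv}.

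The hard part will be the passage to the limit in the Galerkin scheme: arranging the cut-off --- on the energy functional $\mathcal E$ rather than on $\|U\|$ --- so that the a~priori estimates of Lemma~\ref{thm:UniformEstimateGalerkin} close for \emph{every} $R$, despite the bilinear term being controlled only through \eqref{eq:Bprop2}--\eqref{eq:Bprop3} (which by themselves give a contribution of size $\|U\||AU|^2$, not dominated by the dissipation), and then securing compactness strong enough --- convergence in $L^2([0,T];V)$ --- to identify $B(U)$ in the limit, all while the driving noise is only weakly tight and must be passed through the Skorokhod representation and Lemma~\ref{thm:ConvThm}. In part (ii) the further subtlety is verifying that the weak limit of the joint laws concentrates on the diagonal of $\mathcal X\times\mathcal X$, which is precisely the point where pathwise uniqueness of the truncated problem --- with its path-dependent cut-off --- is needed.
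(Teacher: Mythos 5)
Your overall architecture (truncated Galerkin scheme, uniform estimates, tightness/Prokhorov/Skorokhod for (i), Gy\"ongy--Krylov plus pathwise uniqueness of the truncated problem for (ii), gluing for the maximal solution) is exactly the paper's, but your choice of cut-off creates a genuine gap at the passage to the limit. You truncate with $\theta_R(\mathcal{E}_n)$, where $\mathcal{E}_n(t)=\sup_{s\le t}\|U_n(s)\|^2+\int_0^t|AU_n|^2\,ds$. This does close the a priori estimates for every $R$ (on the support of $\theta_R$ the total drift contribution of $B$ is bounded pathwise by an $R$-dependent constant, since the integrated dissipation is itself cut at $2R$), but the fatal point is elsewhere: the compactness you can extract from Lemma~\ref{thm:UniformEstimateGalerkin} lives in $\mathcal{X}_U=L^2(0,T;V)\cap C([0,T];V')$ (cf.\ \eqref{eq:phaseSpaceStrongConv}), and the functional $\mathcal{E}_n$ is only \emph{lower semicontinuous} there: $\sup_{s\le t}\|\cdot\|^2$ and $\int_0^t|A\cdot|^2\,ds$ are precisely the top-order quantities for which the estimates give only weak or weak-$*$ bounds. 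Along the Skorokhod sequence you therefore get $\mathcal{E}(t)\le\liminf_k\mathcal{E}_{n_k}(t)$ and nothing more, so $\theta_R(\mathcal{E}_{n_k}(t))$ has no identifiable limit and the product $\theta_R(\mathcal{E}_{n_k})B^{n_k}(\tilde U^{n_k})$ cannot be identified with $\theta_R(\mathcal{E})B(\tilde U)$; upgrading the compactness to $C([0,T];V)\cap L^2(0,T;D(A))$ is not available. This is exactly why the paper cuts off $\|U^n-U^n_*\|$, a function of the $V$-norm at a \emph{single} time of the solution shifted by the linear flow \eqref{eq:AuxLinear}: strong convergence in $L^2(0,T;V)$ yields, after thinning, a.e.-in-$(t,\omega)$ convergence of $\|\tilde U^{n_k}-\tilde U^{n_k}_*\|$, which suffices to pass to the limit in the cut-off factor (see the treatment of $J_3^{n_k}$ in Section~\ref{sec:VerEQ}).

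Two further points of comparison. With the paper's cut-off the radius must be small, $\kappa\le 1/(64c_0)$ as in \eqref{eq:kappaCond}, because \eqref{eq:Bprop3} only gives $|\langle B(\bar U),A\bar U\rangle|\le c_0\|\bar U\|\,|A\bar U|^2$ and the factor $\|\bar U\|\le 2\kappa$ must be absorbed into the dissipation; the shift by $U^n_*$ is then what makes the stopping time \eqref{eq:firstExistenceTime} strictly positive for arbitrary $V$-valued data, so no decomposition of $\mu_0$ is needed in part (i) (reassembling martingale solutions constructed on different bases is itself nontrivial). Second, your treatment of $\theta_R(\mathcal{E}^1)-\theta_R(\mathcal{E}^2)$ in the uniqueness step injects $(\int_0^t|A\rho|^2\,ds)^{1/2}$ against terms of size $\|U^1\|\,|AU^1|\,|A\rho|$ and forces a short-time absorption and iteration; the paper's cut-off reduces this difference to $c\|\bar R\|$ pointwise in time (the linear parts cancel on $\Omega_0$), which drops directly into the stochastic Gronwall scheme leading to \eqref{eq:FinalConcForStochGronUnique}. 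If you wish to keep an energy-type truncation you must either cut off a quantity continuous under $L^2(0,T;V)\cap C([0,T];V')$ convergence or establish compactness in the stronger norms, neither of which your estimates provide.
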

The compactness arguments leading to Theorem~\ref{thm:MainResult} 
are carried out in Sections~\ref{sec:local-existence}
and \ref{sec:localmax-pathwisesolns} for (i) and (ii) respectively.
We provide the details of the passage to the limit needed for 
both items in Section~\ref{sec:passToLim}.

\begin{Rmk}\label{Rmk:ProbDependence}
  \begin{itemize}
  \item[(i)] We note that, as we are working at the intersection of
    two fields, the terminology may cause some confusion.  In the
    literature for stochastic differential equations the term ``weak
    solution'' is sometimes used synonymously with the term
    ``martingale solution'' while the designation ``strong solution''
    may be used for a ``pathwise solution''.  See the introductory
    text of {\O}ksendal \cite{Oksendal1} for example. The former
    terminologies are avoided here because it is confusing in the
    context of partial differential equations.  Indeed, from the PDE
    point of view, strong solutions are solutions which are uniformly
    bounded in $H^1$, while weak solutions are those which are merely
    bounded in $L^2$.  In this work we are therefore considering both
    weak and strong solutions in probabilistic sense.  From the PDE
    point of view we may say that we are considering strong type
    solutions since, in the applications considered here $V$ is taken
    to be an appropriate subspace of $H^{1}$ that incorporates the
    boundary conditions, etc., for \eqref{eq:PE3DBasic}.
  \item[(ii)] The notion of global existence, both for the Martingale
    and the Pathwise contexts, are included here for the sake of
    completeness.  Of course, the passage from the maximal to the
    global existence of pathwise solutions is a significant further
    step in the analysis and requires further structure for
    \eqref{eq:AbsEq}.  We refer the reader to \cite{GlattHoltzZiane2}
    and \cite{GlattHoltzTemam1} where this is done for the 2D
    Navier-Stokes Equations and the 2D Primitive Equations
    respectively. Current work, making use of the main result herein,
    treats the global existence of solutions for the stochastic 3D
    Primitive Equations \cite{DebusscheGlattHoltzTemamZiane1}.
  \item[(iii)] In Section~\ref{sec:localmax-pathwisesolns},
    \ref{sec:passToLim} we consider both Martingale and 
    Pathwise solutions of the modified system
    \begin{equation}\label{eq:modFullSystemCut}
       dU + ( AU + \theta(\|U - U_*\|)B(U) + F(U)) dt = \sigma(U)dW, 
        \quad U(0) = U_0,\\
    \end{equation}
    where
    \begin{equation}\label{eq:linearAuxSystem}
      \frac{d}{dt} U_* + AU_* = 0, \quad U(0) = U_0,\\
    \end{equation}
    and $\theta$ is a smooth cut-off function as defined below in
    \eqref{eq:CuttoffDef}.  The notions of solutions for
    \eqref{eq:modFullSystemCut} are, with trivial modifications,
    identical to Definitions~\ref{def:MartingaleSoln},
    \ref{def:Pathwisesolns} given for \eqref{eq:AbsEq} above.
\end{itemize}
\end{Rmk}

\subsection{Compact Embedding Theorems}
\label{sec:compEmbedding}
We shall make use of two compact embedding results taken from
\cite{FlandoliGatarek1} which we restate here.  
See also related results in \cite{Temam4}.
To this end we first recall some spaces of fractional (in time) derivative.  Such 
spaces are natural since we do not expect solutions of stochastic evolution
systems to be differentiable in time but merely Holder continuous of
order strictly less than $1/2$.  

Let $X$ be a separtable Hilbert space and
denote the associated norm by $|\cdot |_X$.  For fixed $p> 1$ and
$\alpha \in (0,1)$ we define
\begin{displaymath}
 W^{\alpha, p} ([0,T]; X) :=
 \left\{ U \in L^p([0,T]; X); 
    \int_0^T \int_0^T 
    \frac{|U(t') - U(t'')|_{X}^p}{|t'-t''|^{1 + \alpha p}}dt' dt''
        < \infty   \right\}.
\end{displaymath}
We endow this space with the norm
\begin{displaymath}
  | U |_{W^{\alpha,p}([0,T]; X)}^p :=  
  \int_0 ^T|U(t')|^p_X dt' +
  \int_0^T \int_0^T  
  \frac{|U(t') - U(t'')|_{X}^p}{|t'-t''|^{1 + \alpha p}}dt' dt''.
\end{displaymath}
For the case when $\alpha = 1$ we take
$W^{1, p} ([0,T]; X) :=
 \{ U \in L^p([0,T]; X); 
           \frac{dU}{dt} \in L^p([0,T]; X) \}$,
to be the classical Sobolev space with its usual norm
\begin{displaymath}
  | U |_{W^{1,p}([0,T]; X)}^p :=  
  \int_0^T |U(t')|^p_X + \left| \frac{dU}{dt}(t') \right|_X^{p} dt'.
\end{displaymath}
Note that for $\alpha \in (0,1)$, $W^{1,p}([0,T]; X) \subset W^{\alpha, p} ([0,t]; X)$ and
   $| U |_{W^{\alpha,p}([0,T]; X)} \leq C | U |_{W^{1,p}([0,T]; X)}$.
With these preliminaries in hand we may now state the compact
embeddings needed below (see \cite{FlandoliGatarek1})
\begin{Lem}\label{thm:FlanGatComp}
  \mbox{}
  \begin{itemize}
  \item[(i)] Suppose that $X_{2} \supset X_{0} \supset X_{1}$ are
    Banach spaces with $X_{2}$ and $X_{1}$ reflexive, and the
    embedding of $X_{1}$ into $X_{0}$ compact.  Then for any $1 < p <
    \infty$ and $0 < \alpha < 1$, the embedding: 
    \begin{equation}\label{eq:cmptEmbdingLpspace}
     L^p([0,T]; X_{1}) \cap W^{\alpha,p}([0,T]; X_{2})       
       \subset \subset L^p([0,T];X_{0})
    \end{equation}
    is compact.
  \item[(ii)] Suppose that $Y_0 \supset Y$ are Banach spaces with $Y$
    compactly embedded in $Y_0$.  Let 
    $\alpha \in (0,1]$ and $p \in (1,\infty)$ be such that
    $\alpha p > 1$ then
    \begin{equation}\label{eq:fracSobEmbeddingContFns}
      W^{\alpha, p}([0,T]; Y) \subset \subset C([0,T],Y_0)
   \end{equation}
    and the embedding is compact.
  \end{itemize}
\end{Lem}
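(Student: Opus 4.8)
The plan is to deduce each of the two compact embeddings from a classical compactness criterion --- a Riesz--Fr\'echet--Kolmogorov--type criterion for vector-valued $L^p$ spaces in part (i), and the Arzel\`a--Ascoli theorem in part (ii) --- after first isolating the ``interpolation'' ingredient that makes the criterion applicable. Since the statements are quoted essentially verbatim from \cite{FlandoliGatarek1}, in the paper it suffices to cite that reference; what follows is the route one would take to reconstruct the arguments.

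For part (i), recall that a bounded family $\mathcal{K} \subset L^p([0,T]; X_0)$ is relatively compact provided that (a) for all $0 < t_1 < t_2 < T$ the set of local integrals $\{\int_{t_1}^{t_2} U(t)\,dt : U \in \mathcal{K}\}$ is relatively compact in $X_0$, and (b) the time-translations converge uniformly over $\mathcal{K}$, i.e.\ $\sup_{U \in \mathcal{K}} \int_0^{T-h} |U(t+h) - U(t)|_{X_0}^p \, dt \to 0$ as $h \downarrow 0$. Property (a) follows from the uniform bound in $L^p([0,T]; X_1)$ together with the compactness of the embedding $X_1 \subset\subset X_0$. For property (b) the key tool is Ehrling's lemma: because $X_1 \subset\subset X_0 \subset X_2$ with the first embedding compact, for every $\eta > 0$ there is a constant $C_\eta$ such that $|w|_{X_0} \le \eta |w|_{X_1} + C_\eta |w|_{X_2}$ for all $w \in X_1$. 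Applying this with $w = U(t+h) - U(t)$ and integrating in $t$, the $X_1$-contribution is bounded by $(2\eta)^p$ times the fixed uniform $L^p([0,T]; X_1)$-bound, while the $X_2$-contribution is dominated by the Slobodeckij seminorm, since by Fubini $\int_0^{T-h}|U(t+h) - U(t)|_{X_2}^p\, dt \le C\, h^{\alpha p}\, |U|_{W^{\alpha,p}([0,T]; X_2)}^p$ for $\alpha \in (0,1)$ (and directly from the definition of $W^{1,p}$ when $\alpha = 1$). Choosing $\eta$ small and then $h$ small yields (b), hence the compact embedding \eqref{eq:cmptEmbdingLpspace}.

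For part (ii), the plan is first to upgrade $W^{\alpha,p}([0,T]; Y)$ to a H\"older space and then to apply Arzel\`a--Ascoli. Since $\alpha p > 1$, the one-dimensional fractional Sobolev embedding gives a continuous inclusion $W^{\alpha,p}([0,T]; Y) \hookrightarrow C^{0,\alpha - 1/p}([0,T]; Y)$; this is proved exactly as in the scalar case, bounding $|U(t') - U(t'')|_Y$ by the Slobodeckij double integral via a Garsia--Rodemich--Rumsey (or direct chaining) estimate. Consequently a bounded subset of $W^{\alpha,p}([0,T]; Y)$ is bounded in $C^{0,\alpha-1/p}([0,T]; Y)$, hence equicontinuous as a family of $Y_0$-valued functions, and for each fixed $t$ its values lie in a bounded --- thus, by hypothesis, relatively compact --- subset of $Y_0$. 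The Arzel\`a--Ascoli theorem for $Y_0$-valued continuous functions then gives relative compactness in $C([0,T]; Y_0)$, establishing \eqref{eq:fracSobEmbeddingContFns}.

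The main obstacle I anticipate is the translation-equicontinuity step (b) in part (i): one must handle the endpoints of $[0,T]$ with care (the shifted function $U(\cdot + h)$ is only defined on $[0, T-h]$, so one restricts or extends accordingly), and one must verify the precise inequality relating $\int_0^{T-h} |U(\cdot + h) - U(\cdot)|_{X_2}^p$ to $h^{\alpha p}$ times the fractional seminorm --- a short but slightly delicate Fubini computation. Everything else (Ehrling's lemma, the fractional-Sobolev-to-H\"older embedding, and Arzel\`a--Ascoli) is classical and can simply be quoted.
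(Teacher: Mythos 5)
The paper does not actually prove this lemma---it is quoted verbatim from \cite{FlandoliGatarek1}---and your outline is a correct reconstruction of the argument in that reference: the Simon/Riesz--Fr\'echet--Kolmogorov compactness criterion combined with Ehrling's lemma and the translation estimate $\int_0^{T-h}|U(t+h)-U(t)|_{X_2}^p\,dt \lesssim h^{\alpha p}|U|^p_{W^{\alpha,p}([0,T];X_2)}$ for part (i), and the embedding $W^{\alpha,p}([0,T];Y)\hookrightarrow C^{0,\alpha-1/p}([0,T];Y)$ for $\alpha p>1$ followed by Arzel\`a--Ascoli for part (ii). One phrase to tighten in part (ii): the values $U(t)$ range over a set bounded in $Y$ (not merely in $Y_0$), and it is precisely the compactness of $Y\subset\subset Y_0$ that makes this set relatively compact in $Y_0$---a bounded subset of $Y_0$ alone would not suffice.
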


\arxiv{
\subsection{Some Tools From Abstract Probability Theory}
\label{sec:ConvThmsAbs}

We next review some classical convergence results for probability
measures defined on separable metric spaces.
In conjuction with the embeddings given in Section~\ref{sec:compEmbedding}, these results
provide some powerful means to address the difficulty of establishing
compactness for the collection of Galerkin approximations associated to 
\eqref{eq:AbsEq}.

Let $(X,d)$ be a complete separable metric space and take $\mathcal{B}(X)$ to be
the associated borel $\sigma$-algebra.   Also, we define $C_b(X)$ to
be the collection of all real valued continuous bounded functions on $X$ and take
$Pr(X)$ to be the set of all probability measures on $(X,
\mathcal{B}(X))$. 
Recall that a collection $\Lambda \subset Pr(X)$ is
said to be \emph{tight} if, for every $\epsilon > 0$, there exists a compact
set $K_\epsilon \subset X$ such that:
\begin{displaymath}
  \mu ( K_\epsilon) \geq 1 - \epsilon \quad \textrm{ for all } \mu \in \Lambda.
\end{displaymath}
On the other hand a sequence $\{ \mu_n \}_{n \geq 0} \subset Pr(X)$ is said to
\emph{converge weakly} to an probability measure $\mu$ if
\begin{displaymath}
  \int f d \mu_n \rightarrow \int f d \mu
\end{displaymath}
over all $f \in C_b(X)$.  We say that a set $\Lambda \subset Pr(X)$ is
weakly compact if every sequence $\{ \mu_n \} \subset \Lambda$ possesses a
weakly convergent subsequence. 

Proofs of the following classical results may be found in e.g.
\cite{ZabczykDaPrato1}.
\begin{Prop}\label{thm:ProhorovSkorhod}
  \begin{itemize}
  \item[(i)] A collection $\Lambda \subset Pr(X)$ is weakly compact if and
    only if it is tight.
  \item[(ii)]  Suppose that a sequence $\{ \mu_n \}_{n \geq 1}$
    converges weakly to a measure $\mu$.  Then there exists a
    probability space $(\tilde{\Omega}, \tilde{\mathcal{F}}, \tilde{\Prb})$ 
    and a sequence of $X$ valued random variables $\{ \tilde{Y}_n \}_{n \geq 0}$ 
    (relative to this space)
    such that $\tilde{Y}_n$ converges almost 
    surely to the random variable $\tilde{Y}$ and
    such that the laws of $\tilde{Y}_n$ and $\tilde{Y}$ are $\mu_{n}$ and $\mu$, i.e.
    $\mu_{n} (E) = \Prb(Y_n \in E)$,  $\mu(E) = \Prb(Y \in E)$,
    for all  $E \in \mathcal{B}(X)$.
  \end{itemize}
\end{Prop}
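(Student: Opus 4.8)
The "final statement" in the excerpt is Proposition~\ref{thm:ProhorovSkorhod}, the combined Prohorov/Skorokhod theorem, which the authors explicitly attribute to standard references (``Proofs of the following classical results may be found in e.g. \cite{ZabczykDaPrato1}''). So rather than reproving it from scratch I will sketch the standard proof that one would cite.

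\bigskip

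The plan is to treat the two items separately, since they are logically independent. For item (i), Prohorov's theorem, the nontrivial direction is that tightness implies weak (sequential) compactness; the converse is an easy consequence of inner regularity of Borel measures on a complete separable metric space together with a diagonal argument, so I would dispatch it first in a sentence. For the main direction, I would first reduce to the case where $X$ is compact: since $\Lambda$ is tight, choose compact sets $K_m$ with $\mu(K_m) \geq 1 - 1/m$ for all $\mu \in \Lambda$, set $K = \overline{\bigcup_m K_m}$, which is a $\sigma$-compact hence separable subspace, and observe that every $\mu \in \Lambda$ is concentrated (up to arbitrarily small mass) on this set. One then embeds $K$ (or rather its completion, a Polish space) homeomorphically into a compact metric space — e.g. the Hilbert cube $[0,1]^{\mathbb N}$ via a countable family of bounded continuous functions separating points — and works there. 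On a compact metric space $\hat X$, $C(\hat X)$ is separable, so the unit ball of its dual $C(\hat X)^*$ is weak-$*$ sequentially compact (Banach--Alaoglu plus metrizability of the ball); extract a subsequence $\mu_{n_k} \to \nu$ weak-$*$, check $\nu$ is a nonnegative functional of norm $\leq 1$, hence by Riesz representation a subprobability measure. Finally, tightness is exactly what upgrades $\nu$ to an honest probability measure concentrated back on $X$: for each $m$, $\nu(K_m) \geq \limsup_k \mu_{n_k}(K_m) \geq 1 - 1/m$ (using that $K_m$ is closed and the portmanteau theorem), so $\nu(X) \geq \nu(\bigcup_m K_m) = 1$. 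The main obstacle here is purely bookkeeping: carefully carrying the measures back and forth between $X$, the completion of $K$, and the compactification while verifying the portmanteau inequalities survive each transfer.

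\bigskip

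For item (ii), the Skorokhod representation theorem, I would proceed by the classical construction. Since $\mu_n \to \mu$ weakly on the separable metric space $X$, one builds the $\tilde Y_n$ on the canonical probability space $(\tilde\Omega, \tilde{\mathcal F}, \tilde{\mathbb P}) = ([0,1], \mathcal B([0,1]), \text{Lebesgue})$. The heart of the matter is a nested family of measurable partitions: for each level $m$ one partitions $X$ into countably many Borel sets $\{E^m_j\}_j$ of diameter $< 2^{-m}$ whose boundaries are $\mu$-null (possible because for each point only countably many concentric ``shells'' can carry positive mass), refining as $m$ increases. Using these one defines, on the unit interval, random variables taking finitely many values in prescribed sets, arranged so that the induced discrete laws match $\mu_n$ restricted to the partition at level $m$ and so that $\tilde Y_n^{(m)}$ and $\tilde Y^{(m)}$ lie in the same partition cell with probability close to $1$; the weak convergence $\mu_n(E^m_j) \to \mu(E^m_j)$ (valid since $\partial E^m_j$ is $\mu$-null) is exactly what makes this matching possible for $n$ large. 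Passing $m \to \infty$ along a suitably fast-growing sequence and taking a.s.\ limits produces $\tilde Y_n$ with law $\mu_n$, $\tilde Y$ with law $\mu$, and $\tilde Y_n \to \tilde Y$ $\tilde{\mathbb P}$-a.s. The delicate point — and the one I expect to be the true obstacle — is the simultaneous coordination of three requirements at every dyadic level (correct marginal law of each $\tilde Y_n^{(m)}$, consistency of the level-$m$ construction with level $m-1$, and the coupling that forces $\tilde Y_n^{(m)}$ near $\tilde Y^{(m)}$), which forces a careful inductive definition and a Borel--Cantelli argument to extract the almost-sure limit.

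\bigskip

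In the write-up for this paper, however, none of the above needs to be reproduced: the statement is invoked purely as a black box, so it suffices to remark that (i) is Prohorov's theorem and (ii) is the Skorokhod representation theorem, both found in \cite{ZabczykDaPrato1} (and, e.g., in Billingsley), and that the separability and completeness of $X$ are precisely the hypotheses under which these classical results apply. The only subtlety worth flagging for the reader is that in our applications $X$ will be one of the product spaces built from $C([0,T];V')$, $L^2([0,T];D(A))$ and $C([0,T];\mathfrak U_0)$ appearing in the compact embedding Lemma~\ref{thm:FlanGatComp}, each of which is indeed Polish, so Proposition~\ref{thm:ProhorovSkorhod} applies verbatim to the laws of the Galerkin approximations.
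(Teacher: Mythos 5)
The paper offers no proof of this proposition at all --- it is stated as a classical result with the single remark that ``Proofs of the following classical results may be found in e.g.\ \cite{ZabczykDaPrato1}'' --- and your proposal correctly identifies this and defers to the standard references in exactly the same way. Your accompanying sketches of the two classical arguments (compactification plus Banach--Alaoglu and Riesz for Prohorov; the nested $\mu$-null-boundary partition construction on $[0,1]$ for Skorokhod) are accurate as far as they go, so there is nothing to correct.
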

Finally we come to an elementary but powerful characterization of convergence in 
probability introduced in \cite{GyongyKrylov1}.
Suppose that $\{Y_n \}_{n \geq 0}$ is a sequence of $X$-valued random
variables on a probability space $(\Omega, \mathcal{F}, \mathbb{P})$.
Let $\{\mu_{n,m}\}_{n, m \geq 1}$ be the collection of joint laws of
$\{Y_n \}_{n \geq 1}$, that is
\begin{displaymath}
    \mu_{n,m}(E) := \Prb( (Y_n, Y_m) \in E),
    \quad E \in \mathcal{B}( X \times X ).
\end{displaymath}
The result from \cite{GyongyKrylov1} is the following:
\begin{Prop}\label{thm:GyongyKry}
  A sequence of $X$ valued random variables $\{Y_n\}_{n \geq 0}$ converges 
  in probability if and only if for
  every subsequence of joint probabilities laws, $\{\mu_{n_k, m_k} \}_{ k \geq 0}$, 
  there exists a further subsequence which converges weakly to a probability 
  measure $\mu$ such that
  \begin{equation}\label{eq:diagonalCondGK}
    \mu( \{(x,y) \in X \times X: x = y  \}) = 1.
  \end{equation}
\end{Prop}
}

\section{The Approximation Scheme}
\label{sec:unif-estim-galerk}

We now implement a Galerkin scheme for (\ref{eq:AbsEq}).  
To this end we introduce the projected operators
\begin{displaymath}
  B^n(U) = P_n B(U), \quad
  F^n(U) = P_n F(U), \quad
  \sigma^n(U) = P_n\sigma(U),
\end{displaymath}
where $U \in V$.  We shall also make use of a 'cut-off' function $\theta:
\mathbb{R} \rightarrow [0,1]$, 
which is $C^\infty$ and such that:
\begin{equation}\label{eq:CuttoffDef}
  \theta(x) = 
  \begin{cases}
    1  & \textrm{ if } | x | \leq \kappa,\\
    0  & \textrm{ if } | x | \geq 2\kappa.\\
  \end{cases}
\end{equation}
Here we choose $\kappa$ to be any positive constant, independent of $n$, such that
\begin{equation}\label{eq:kappaCond}
	\kappa \leq \frac{1}{64 c_0},
\end{equation}
where $c_0$ is the constant appearing in \eqref{eq:Bprop3}.  The 
reason for this choice will be made apparent in the proof of Lemma~\ref{thm:UniformEstimateGalerkin}
(see \eqref{eq:ReallyFuckedCutoff2}, \eqref{eq:nonlinearEstMods}).

We now fix a stochastic basis $\mathcal{S} = (\Omega, \mathcal{F}, \{\mathcal{F}_t\}_{t \geq
  0}, \mathbb{P}, W)$ and an element $U_{0} \in V$ 
with law $\mu_{0}$. We find pathwise
solutions to the Galerkin systems defined by \eqref{eq:GalerkinCutoff} relative 
to this basis below.
Since we allow for an ill-behaved nonlinear term $B$ (that is satisfying
\eqref{eq:Bprop1}, \eqref{eq:Bprop2}, \eqref{eq:Bprop3})
we introduce an auxiliary linear system in order to carry uniform
estimates on the Galerkin systems.  We take $U^n_*$ to be the unique
($H_{n}$ valued) solution of
\begin{equation}  \label{eq:AuxLinear}
  \frac{d}{dt} U^n_* + A U^n_* = 0, \quad U^n_*(0) = P_n U_0.
\end{equation}
One may readily verify that, for any $p \geq 2$, $U^n_*$ satisfies the estimates
\begin{equation}\label{eq:LpTimeEstimatesUstar}
  \begin{split}
  \sup_{t' \leq t} \| U^n_*\|^p &+ \int_0^T |AU^n_*|^2 \| U^n_*\|^{p-2}
  dt'  +\left(\int_0^T |AU^n_*|^2 dt' \right)^{p/2} \leq c\| U_0\|^p.\\
\end{split}
\end{equation}
It is also clear that
\begin{equation}\label{eq:timeDerivative}
  |U^n_*|_{W^{1,2}(0,T;H)} \leq c \int_0^T |AU^n_*|^2 dt'  \leq c \|U_0\|^2.
\end{equation}
With these notations in place we define the \emph{Galerkin system at
  order $n$}
\begin{equation}\label{eq:GalerkinCutoff}
  \begin{split}
    d U^n + [A U^n + \theta( \| U^n - U^n_*  \| ) &B^n(U^n) + F^n(U^n) ]dt
     =\sigma^n(U^n) dW,\\
     U^n(0) &= P_n U_0 := U^n_0.
 \end{split}
\end{equation}
Here $U^n$ is an adapted process in
$C([0,T]; H_n)
\cong C([0,T], \mathbb{R}^n)$. The $U^n_*$ appearing in the cutoff
function $\theta$ are solutions of the linear systems
(\ref{eq:AuxLinear}).  The significance of this addition will become
clear in the proof of Lemma~\ref{thm:UniformEstimateGalerkin} below.
Note that, due to the preserved cancellation property in the nonlinear
portion of the equation, the existence and uniqueness of solutions at
each order is standard.  See, for example, \cite{Flandoli1} for further details.
\begin{Lem}\label{thm:UniformEstimateGalerkin}
  Assume that $F$ and $\sigma$ satifisfy \eqref{eq:sublinerConF}, \eqref{eq:BndCondSig}.
  Fix $U_{0}$, a $V$-valued, $\mathcal{F}_{0}$ measurable random
  variable and consider the associated sequence of solutions
  $\{U^{n}\}_{n \geq 1}$ of the Galerkin system \eqref{eq:GalerkinCutoff},
  \eqref{eq:AuxLinear}.  We suppose that the constant $\kappa$ appearing
  in the cutoff function $\theta$ satisfies \eqref{eq:kappaCond}.
 Let $p \geq 2$ and suppose that
    \begin{equation} \label{eq:DataMomentConditionMartingale}
      \E \|U_0\|^q < \infty  
      \textrm{ for some } q \geq \max\{2p, 4\}.
    \end{equation}
    Then there exists a finite number $K >0$ depending only on $p$, $\E \|U_{0}\|^{q}$ and the 
    the constants in \eqref{eq:Bprop3}, \eqref{eq:sublinerConF}, \eqref{eq:BndCondSig} 
    such that 
  \begin{itemize}
  \item[(i)] for every $n \geq 1$,
 \begin{equation}    \label{eq:UniformGalEst1}
     \E  \left( 
       \sup_{t' \leq T} \|U^n \|^p + 
      \int_0^T |A U^n|^2 \|U^n \|^{p-2} dt'
      \right) \leq K,
    \end{equation}
    and also
 \begin{equation}    \label{eq:UniformGalEst2}
     \E  \left( 
     \int_0^T |A U^n|^2 dt'
      \right)^{p/2} \leq K,
  \end{equation}  
and finally
 \begin{equation}\label{eq:uniformFracTmEstGalerkin}
    \E \left(  
     \left| \int_0^{t} \sigma^n(U^n) dW \right|_{W^{\alpha, p}([0,T];H)}^p
        \right)  \leq K.
 \end{equation}
\item[(ii)] If under the given assumptions we additionally suppose that $p \geq 4$, then we have, for all
$n\geq 1$:
    \begin{equation}\label{eq:UniformDrftyEst}
     \E \left(  
     \left| U^n(t) - \int_0^t\sigma^n(U^n) dW\right|_{W^{1, 2}([0,T];H)}^2
        \right)  \leq K.
  \end{equation} 
 \end{itemize}
\end{Lem}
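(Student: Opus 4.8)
The plan is to carry out an energy estimate not on $U^n$ itself but on the shifted process $V^n := U^n - U^n_*$, since it is exactly $\|V^n\|$ that is controlled by the cut-off $\theta$ in \eqref{eq:GalerkinCutoff}. Subtracting \eqref{eq:AuxLinear} from \eqref{eq:GalerkinCutoff} one gets, in the finite-dimensional space $H_n$,
\[
  dV^n + AV^n\,dt = \bigl(-\theta(\|V^n\|)B^n(U^n) - F^n(U^n)\bigr)dt + \sigma^n(U^n)\,dW, \qquad V^n(0) = 0 ,
\]
so that all the formal computations below become rigorous once one localizes by the stopping times $\tau_R := \inf\{t \ge 0 : \|V^n(t)\| \ge R\} \wedge T$ and sends $R \to \infty$ by monotone convergence at the very end. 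Fixing $p \ge 2$, I would apply It\={o}'s formula to $(\|V^n\|^2)^{p/2}$ (regularizing $x \mapsto x^{p/2}$ near the origin when $2 < p < 4$), using $((V^n, W)) = (AV^n, W)$ for $W \in H_n$. A crucial point is that the It\={o} correction terms should be controlled through the $V$-bound on $\sigma$, not the $H$-bound: since $\sum_k ((V^n, \sigma^n_k(U^n)))^2 \le \|V^n\|^2 \|\sigma(U^n)\|_{L_2(\mathfrak{U},V)}^2 \le c\|V^n\|^2(1 + \|U^n\|^2)$ by \eqref{eq:BndCondSig}, these corrections are bounded by $c\|V^n\|^{p-2}(1 + \|U^n\|^2)$ and in particular reintroduce no factor of $|AV^n|^2$.

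The real work is the nonlinear term. Writing $U^n = V^n + U^n_*$ and expanding bilinearly, $(B(U^n, U^n), AV^n)$ splits into the pairings of $AV^n$ with $B(V^n,V^n)$, $B(V^n,U^n_*)$, $B(U^n_*,V^n)$ and $B(U^n_*,U^n_*)$. For the diagonal piece, \eqref{eq:Bprop3} gives $|(B(V^n,V^n),AV^n)| \le c_0\|V^n\|\,|AV^n|^2$, and on the support of $\theta(\|V^n\|)$ one has $\|V^n\| \le 2\kappa$, so by \eqref{eq:kappaCond} this is at most $\tfrac{1}{32}|AV^n|^2$ and is absorbed by the dissipation. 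For each of the three remaining pieces, \eqref{eq:Bprop3} produces a factor $|AV^n|^{3/2}$ or $|AV^n|$ times powers of $\|V^n\| \le 2\kappa$ and of $\|U^n_*\|$, $|AU^n_*|$; by Young's inequality each is dominated by $\varepsilon|AV^n|^2 + C\|U^n_*\|^2|AU^n_*|^2 + C$. Likewise $|(F(U^n),AV^n)| \le \varepsilon|AV^n|^2 + C(1 + \|U^n\|^2)$ by \eqref{eq:sublinerConF}. Choosing $\varepsilon$ small, multiplying through by $\|V^n\|^{p-2}$, and absorbing all $\|V^n\|^{p-2}|AV^n|^2$ terms on the left, I arrive at a differential inequality of the form
\[
  \|V^n(t)\|^p + c\int_0^t \|V^n\|^{p-2}|AV^n|^2\,ds
    \;\le\; C\int_0^t\!\bigl(1 + \|V^n\|^p + \|U^n_*\|^p + \|V^n\|^{p-2}\|U^n_*\|^2|AU^n_*|^2\bigr)ds + M_t ,
\]
where $M_t := p\int_0^t \|V^n\|^{p-2}(AV^n, \sigma^n(U^n)\,dW)$ is a martingale on $[0,\tau_R]$.

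To deduce (i), take $\sup_{t\le T}$ and then $\E$. The martingale term is handled by \eqref{eq:BDG}: the quadratic variation density of $M$ is $\lesssim \|V^n\|^{2(p-2)}\|V^n\|^2(1 + \|U^n\|^2)$, so \eqref{eq:BDG} followed by Young's inequality gives $\E\sup_{t \le T}|M_t| \le \tfrac12 \E\sup_{t\le T}\|V^n\|^p + C\,\E\int_0^T(1 + \|V^n\|^p + \|U^n_*\|^p)\,ds$, the first term being absorbed on the left. For the $U^n_*$-contributions I would invoke \eqref{eq:LpTimeEstimatesUstar}: in particular $\sup_t\|U^n_*\|^p \le c\|U_0\|^p$ and $\int_0^T\|U^n_*\|^2|AU^n_*|^2\,ds \le \sup_t\|U^n_*\|^2 \int_0^T|AU^n_*|^2\,ds \le c\|U_0\|^4$, whence by Young $\E\int_0^T \|V^n\|^{p-2}\|U^n_*\|^2|AU^n_*|^2\,ds \le \tfrac14 \E\sup_t\|V^n\|^p + C\,\E\|U_0\|^{2p}$, which is finite precisely because $q \ge 2p$ in \eqref{eq:DataMomentConditionMartingale}. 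After these absorptions, Gr\"onwall's lemma applied to $g(t) := \E\sup_{s\le t}\|V^n(s)\|^p$ yields $g(T) \le K$ with $K$ independent of $n$ (and of $R$), hence also $\E\int_0^T\|V^n\|^{p-2}|AV^n|^2\,ds \le K$; passing back through $U^n = V^n + U^n_*$ and \eqref{eq:LpTimeEstimatesUstar} gives \eqref{eq:UniformGalEst1}. For \eqref{eq:UniformGalEst2} one instead uses the case $p = 2$ of the identity above, solves for $\int_0^T|AV^n|^2\,ds$, raises to the power $p/2$, takes $\E$, and bounds the resulting martingale term via \eqref{eq:BDG} and the already-established control of $\E\sup_t\|V^n\|^p$; adding the $U^n_*$ part via \eqref{eq:LpTimeEstimatesUstar} finishes it. Finally, \eqref{eq:uniformFracTmEstGalerkin} follows at once from the fractional-time inequality \eqref{eq:BDGfrac}, whose right-hand side is $c\,\E\int_0^T \|\sigma^n(U^n)\|_{L_2(\mathfrak{U},H)}^p\,ds \le c\,\E\int_0^T (1 + \|U^n\|^p)\,ds$, finite by \eqref{eq:UniformGalEst1}.

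For part (ii), set $Z^n(t) := U^n(t) - \int_0^t \sigma^n(U^n)\,dW = U^n_0 - \int_0^t (AU^n + \theta(\|V^n\|)B^n(U^n) + F^n(U^n))\,ds$, so that $\tfrac{d}{dt}Z^n = -(AU^n + \theta(\|V^n\|)B^n(U^n) + F^n(U^n))$ lies in $L^2([0,T];H)$ a.s. I would estimate $\E\int_0^T |\tfrac{d}{dt}Z^n|^2\,ds$ in three pieces: $\E\int_0^T|AU^n|^2\,ds \le K$ by \eqref{eq:UniformGalEst2}; $\E\int_0^T|F^n(U^n)|^2\,ds \le c\,\E\int_0^T(1 + \|U^n\|^2)\,ds \le K$ by \eqref{eq:sublinerConF} and \eqref{eq:UniformGalEst1}; and for the nonlinearity, \eqref{eq:Bprop3} gives $|B^n(U^n)| \le c_0\|U^n\|\,|AU^n|$, while on the support of $\theta$ one has $\|U^n\| \le \|U^n_*\| + 2\kappa \le c(1 + \|U_0\|)$, so $\theta^2|B^n(U^n)|^2 \le c(1+\|U_0\|^2)|AU^n|^2$ and hence $\E\int_0^T \theta^2 |B^n(U^n)|^2\,ds \le c\,\bigl(\E(1+\|U_0\|^2)^2\bigr)^{1/2}\bigl(\E(\int_0^T|AU^n|^2ds)^2\bigr)^{1/2} \le K$, using \eqref{eq:UniformGalEst2} (with $p\ge4$) and \eqref{eq:DataMomentConditionMartingale}. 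Combined with the trivial bound $\E\int_0^T|Z^n|^2\,ds \le c\,\E\int_0^T\|U^n\|^2\,ds + cT\,\E\sup_t\bigl|\int_0^t\sigma^n(U^n)\,dW\bigr|^2 \le K$ (Cauchy--Schwarz, \eqref{eq:BDG} and \eqref{eq:UniformGalEst1}), this yields \eqref{eq:UniformDrftyEst}. The step I expect to be the main obstacle is the control of the nonlinearity: it is only the combination of the shift by $U^n_*$, the smallness condition \eqref{eq:kappaCond} on $\kappa$, and the $V$-regularity of the noise in \eqref{eq:BndCondSig} that makes the otherwise fatal term $c_0\|V^n\|\,|AV^n|^2$ absorbable while preventing the It\={o} correction from reintroducing $|AV^n|^2$.
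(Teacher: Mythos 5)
Your proposal is correct and follows essentially the same route as the paper: the same shift $\bar U^n = U^n - U^n_*$, the same application of It\={o}'s formula to $\|\bar U^n\|^p$ with the diagonal term $c_0\|\bar U^n\|\,|A\bar U^n|^2$ absorbed via the cutoff and the smallness condition \eqref{eq:kappaCond}, the same treatment of the cross terms and the martingale term via \eqref{eq:Bprop3}, \eqref{eq:BDG}, \eqref{eq:LpTimeEstimatesUstar} and Gr\"onwall, the same $p=2$ identity raised to the power $p/2$ for \eqref{eq:UniformGalEst2}, and the same integration of the equation for part (ii). The only differences (Young's inequality giving $\E\|U_0\|^{2p}$ where the paper uses the cutoff bound $\|\bar U^n\|^{p-2}\le(2\kappa)^{p-2}$ directly, and a Cauchy--Schwarz in $\Omega$ in part (ii)) are cosmetic and remain within the stated moment hypotheses.
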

\begin{proof}
  Define $\bar{U}^n := U^n - U^n_*$.   We may readily observe that $\bar{U}^n$ satisfies 
  \begin{equation}\label{eq:smallDataSystem}
    \begin{split}
      d \bar{U}^n+ [A  \bar{U}^n 
           + \theta( \| \bar{U}^n \|) B^n( \bar{U}^n + U^n_*) 
           &+ F^n( \bar{U}^n + U^n_*)] dt
           =  \sigma^n (\bar{U}^n + U^n_*) dW,\\
      \bar{U}^n(0) &= 0.\\
    \end{split}
  \end{equation}
  We apply $A^{1/2}$ to this system.  With the It\={o} formula
  we infer, for $p \geq 2$ that,
  \begin{equation}\label{eq:ItoEvolutoinShiftedEqn}
    \begin{split}
      d \| \bar{U}^n&\|^p+ p|A \bar{U}^n|^2 \| \bar{U}^n\|^{p-2}dt\\
         =& - p\langle F^n( \bar{U}^n + U^n_*), A \bar{U}^n \rangle 
              \| \bar{U}^n\|^{p-2}dt
             + \frac{p}{2} |\sigma^n (\bar{U}^n +U^n_*)|^2_{L_2(\mathfrak{U}, V)} 
                 \| \bar{U}^n\|^{p-2}dt\\
          &+ \frac{p(p-2)}{2} \langle \sigma^n (\bar{U}^n + U^n_*)  , A \bar{U}^n \rangle^2
          \| \bar{U}^n\|^{p-4}dt
         - p \theta( \| \bar{U}^n \|) \langle B^n( \bar{U}^n + U^n_*)
                  , A \bar{U}^n \rangle  \| \bar{U}^n\|^{p-2}dt\\
         &+ p \| \bar{U}^n\|^{p-2} 
                \langle \sigma^n (\bar{U}^n + U^n_*) 
                   , A \bar{U}^n \rangle  dW\\
         =:& (J_{1}^p + J_{2}^p + J_{3}^p + J_{4}^p) dt + J_{5}^p dW.
   \end{split}
  \end{equation}
  We are able to estimate the first four deterministic terms pointwise in time.  Using
  \eqref{eq:sublinerConF} we observe that
  \begin{equation}    \label{eq:sublinearEst1}
   \begin{split}
     |J_1^p| \leq&c (1 + \|U^n_* \| + \| \bar{U}^n\|) |A \bar{U}^n|  
                      \|\bar{U}^n\|^{p-2}
             \leq \frac{p}{8} |A \bar{U}^n|^2 \|\bar{U}^n\|^{p-2}
                      + c (1 + \| U^n_*\| + \| \bar{U}^n\|)^2 \|\bar{U}^n\|^{p-2} \\
            \leq& \frac{p}{8} |A \bar{U}^n|^2 \|\bar{U}^n\|^{p-2}
                      + c (1 + \| U^n_* \|)^p
                     + c\|\bar{U}^n\|^p.
   \end{split}
  \end{equation}
  The terms $J_2^p$ and $J_3^p$ are also estimated directly using \eqref{eq:BndCondSig}
  \begin{equation}    \label{eq:easyEstJ3J4}
   \begin{split}
      |J_2^p| + |J_3^p| \leq&  c((1 + \|U^n_* \|)^2 + \| \bar{U}^n\|^2) \|\bar{U}^n\|^{p-2}
                      \leq c (1 + \| U^n_* \|)^p + c\|\bar{U}^n \|^p.
  \end{split}
  \end{equation}
  Using the bilinearity of $B$ the term $J_4^p$ splits according to:
  \begin{equation}    \label{eq:nonlinearBspliting}
   \begin{split}
    | J_4^p | \leq& p
    \theta( \| \bar{U}^n \|)  \| \bar{U}^n\|^{p-2} |A \bar{U}^n|
    ( | B( U^n_*)| + |B( \bar{U}^n,U^n_*)| +
      |B( U^n_*, \bar{U}^n)| + |B(\bar{U}^n)|)\\
      :=& J_{4,1}^p +J_{4,2}^p +J_{4,3}^p +J_{4,4}^p.
    \end{split}
  \end{equation}
  We estimate each of these terms using \eqref{eq:Bprop3}.  
  For $J_{4,1}^p$ we have
  \begin{equation}\label{eq:nonlinearFuckingAround1}
    \begin{split}
      |J_{4,1}^p| \leq&
      c_0 \theta( \| \bar{U}^n \|)  
      \| \bar{U}^n\|^{p-2}|A\bar{U}^n| \|U^n_*\| |A U^n_*|
      \leq  \frac{p}{8}\| \bar{U}^n\|^{p-2} |A\bar{U}^n|^2 
                    +  c \theta( \| \bar{U}^n \|)  \| \bar{U}^n\|^{p-2}
                        \|U^n_*\|^2 |A U^n_*|^2\\
      \leq&  \frac{p}{8}\| \bar{U}^n\|^{p-2} |A\bar{U}^n|^2 
                    + c\kappa^{p-2} \|U^n_*\|^2 |A U^n_*|^2.
   \end{split}
  \end{equation}
  For the next two terms we estimate
  \begin{equation}    \label{eq:fuckoffEstimatenum2}
   \begin{split}
     |J_{4,2}^p| + |J_{4,3}^p| \leq& c
    \theta( \| \bar{U}^n \|)  
     \| \bar{U}^n\|^{p-2} |A\bar{U}^n|^{3/2}
     \|U^n_*\|^{1/2} |A U^n_*|^{1/2} \| \bar{U}^n\|^{1/2}\\
     \leq& \frac{p}{8}\| \bar{U}^n\|^{p-2} |A\bar{U}^n|^{2}
                 + c  \theta( \| \bar{U}^n \|)
                 \| \bar{U}^n\|^{p}\|U^n_*\|^{2} |A U^n_*|^{2}\\
     \leq& \frac{p}{8}\| \bar{U}^n\|^{p-2} |A\bar{U}^n|^{2}
                 + c \kappa^p \|U^n_*\|^{2} |A U^n_*|^{2}.\\
  \end{split}
  \end{equation}
  The last term yields to the bounds
  \begin{equation}      \label{eq:ReallyFuckedCutoff2}
 \begin{split}
   |J_{4,4}^p| \leq c_0 p
   \theta( \| \bar{U}^n \|)  
    \| \bar{U}^n\|^{p-1} |A\bar{U}^n|^{2}
    \leq 2 \kappa p  c_0  \| \bar{U}^n\|^{p-2} |A\bar{U}^n|^{2}
    \leq \frac{p}{8} \| \bar{U}^n\|^{p-2} |A\bar{U}^n|^{2}.
 \end{split}
 \end{equation}
Note here that the last inequality follows from the requirement \eqref{eq:kappaCond}
imposed on $\kappa$.
 
 Finally we address the stochastic terms.  Observe that for any pair of
 stopping times $0 \leq \tau_a \leq \tau_b \leq T$,  the BDG inequality, 
 \eqref{eq:BDG} with $r = 1$, implies that
 \begin{equation}\label{eq:randomizedWanking}
  \begin{split}
     \E \sup_{\tau_a \leq  t \leq \tau_b} \left|\int_{\tau_a}^{t}
       J_5^{p} dW\right|
     &\leq c
     \E  \left( \int_{\tau_a}^{\tau_b}\| \bar{U}^n\|^{2(p-2)} 
                \langle \sigma^n (\bar{U}^n + U^n_*) 
                   , A \bar{U}^n \rangle ^2ds \right)^{1/2}\\
     &\leq c 
    \E \left( \int_{\tau_a}^{\tau_b}\| \bar{U}^n\|^{2(p-1)} 
           (1  + \|U^n_*\| + \|\bar{U}^n\|)^2 ds
                  \right)^{1/2}\\
          &\leq c \left(
    \E \sup_{\tau_a \leq  t \leq \tau_b}  \| \bar{U}^n\|^{p-1} 
   \left( \int_{\tau_a}^{\tau_b}
          (1  + \|U^n_*\| + \|\bar{U}^n\|)^2 ds
                 \right)^{1/2} \right) \\ 
            &\leq 
   \frac{1}{2} 
   \E \left( 
     \sup_{\tau_a \leq  t \leq \tau_b}  \|\bar{U}^n\|^{p} 
     \right)
     +
   c\E\left( \int_{\tau_a}^{\tau_b}
         (1  + \|U^n_*\| + \|\bar{U}^n\|)^2 ds
                \right)^{p/2}\\ 
            &\leq 
   \frac{1}{2} 
   \E \left( 
     \sup_{\tau_a \leq  t \leq \tau_b}  \|\bar{U}^n\|^{p} 
     \right)
     + c\E
    \int_{\tau_a}^{\tau_b}
         ((1  + \|U^n_*\|)^p + \|\bar{U}^n\|^p) ds.
                \\ 
   \end{split}
 \end{equation}
Combining the estimates \eqref{eq:sublinearEst1},
\eqref{eq:easyEstJ3J4}, \eqref{eq:nonlinearBspliting},
\eqref{eq:nonlinearFuckingAround1}, \eqref{eq:fuckoffEstimatenum2},
\eqref{eq:ReallyFuckedCutoff2}, \eqref{eq:randomizedWanking}  we find, 
for any $t \in (0,T]$,
\begin{equation}
  \begin{split}
     \E  \biggl( 
       \sup_{t' \in [0, t]} \|\bar{U}^n \|^p &+ 
      \int_{0}^{t}  |A \bar{U}^n|^2 \|\bar{U}^n \|^{p-2} dt'
      \biggr)
      \leq
      c
      \E \int_{0}^{t} \left(
      	\|\bar{U}^{n}\|^{p}  + (1 + |AU^n_*|^{2} \|U^n_*\|^{2}  + \| U^n_*\|^p )
	\right) dt'\\
	      \leq&
      c
      \int_{0}^{t} \left( \E  \sup_{s \in [0, t']}
      	\|\bar{U}^{n}\|^{p}  +  \E (1 + |AU^n_*|^{2} \|U^n_*\|^{2}  + \| U^n_*\|^p )
	\right) dt'.\\
  \end{split}
\end{equation}
Applying then the Gronwall
inequality yields
\begin{equation}\label{eq:FinalConcluUbar}
  \begin{split}
    \E  \left( 
       \sup_{ 0 \leq t' \leq T} \|\bar{U}^n \|^p + 
      \int_0^T |A \bar{U}^n|^2 \|\bar{U}^n \|^{p-2} dt'
      \right)
      &\leq c
      \E \int_0^T (1 + |AU^n_*|^{2} \|U^n_*\|^{2}  + \| U^n_*\|^p )dt'\\
      & \leq c \E (1 + \|U_0\|)^{\max\{ p, 4\}}.
  \end{split}
\end{equation}
The second inequality follows from \eqref{eq:LpTimeEstimatesUstar}.
We also note that the term involving $|AU^n_*|^2\|U^n_*\|^2$ is
responsible for the first part of the moment condition
\eqref{eq:DataMomentConditionMartingale}.

In order to complete the proof of (\ref{eq:UniformGalEst1}) we observe that
\begin{equation}\label{eq:UnifromGalEstMoreProblems}
  \begin{split}
   \E  \biggl(&
      \sup_{0 \leq t' \leq T} \|U^n \|^p +
     \int_0^T |A U^n|^2 \|U^n \|^{p-2} dt'
     \biggr) 
     \leq c
     \E \left( \sup_{0 \leq t' \leq T} \|U^n \|^p 
  + \left(\int_0^T |A U^n|^2dt' \right)^{p/2} \right)\\
    \leq& c
     \E \left( \sup_{0 \leq t' \leq T} \|U^n_* \|^p 
   + \left(\int_0^T |A U^n_*|^2dt' \right)^{p/2} \right)
    +c \E \left( \sup_{0 \leq t' \leq T} \|\bar{U}^n \|^p 
  + \left(\int_0^T |A \bar{U}^n|^2dt' \right)^{p/2} \right).\\
\end{split}
\end{equation}
Given the estimates \eqref{eq:FinalConcluUbar} 
for $\bar{U}^{n}$ and \eqref{eq:LpTimeEstimatesUstar} for $U_{*}^{n}$ it therefore remains
to estimate the last term, i.e. to prove the analogue of (\ref{eq:UniformGalEst2}) for $\bar{U}$.  
Returning to (\ref{eq:ItoEvolutoinShiftedEqn}) for the case $p =2$ we must therefore find suitable estimates
for the left hand side of the expression
\begin{equation}\label{eq:returntoOZahh}
  \begin{split}
   \E \biggl(\int_0^T |A \bar{U}|^2 dt' \biggr)^{p/2}
       \leq&
       \E \left( \int_0^T |J_1^2|  
       	+ |J_2^2|  + |J_{2}^{4}|ds
         + \sup_{t \in [0,T]} \left| \int_0^t J_5^2 dW
       \right| \right)^{p/2}.
    \end{split}
\end{equation}
Note that, when $p =2$, $|J_3^p| =  0$.  By treating $|J_{1}^{2}|$, $|J_{2}^{2}|$
in a similar manner to (\ref{eq:sublinearEst1}), (\ref{eq:easyEstJ3J4}),
we infer
\begin{equation}\label{eq:returntoOZahhI1}
  \begin{split}
  		|J_{1}^{2}| + |J_{2}^{2}| 
                 \leq 2^{-(2 + 2/p)}|A \bar{U}^{n}|^{2}
                 	         + c (1+ \|U_{*}^{n}\|^{2}) 
	         	        +c\|\bar{U}^{n}\|^{2}.
  \end{split}
\end{equation} 
For $|J_{4}^{2}|$ we 
estimate similarly to \eqref{eq:nonlinearBspliting}, \eqref{eq:nonlinearFuckingAround1},
\eqref{eq:fuckoffEstimatenum2}, \eqref{eq:ReallyFuckedCutoff2}
to deduce
\begin{equation}\label{eq:nonlinearEstMods}
	|J_{4}^{2}|
		\leq 2^{-(3+2/p) } |A \bar{U}^{n}|^{2}
		         + c \|U_{*}^{n}\|^{2}|AU_{*}^{n}|^{2}
		         + 4 \kappa c_0 |A\bar{U}|^{2} 
                 \leq 2^{-(2 + 2/p)}|A \bar{U}^{n}|^{2}
                 	         + c \|U_{*}^{n}\|^{2}|AU_{*}^{n}|^{2}.
\end{equation}
The constant $c_0$ after the first inequality is
from \eqref{eq:Bprop3}.  
Thus, the assumption \eqref{eq:kappaCond} justifies the second inequality.
For the stochastic intergral term in \eqref{eq:returntoOZahh} 
we apply the BDG inequality, \eqref{eq:BDG}, and deduce:
\begin{equation}\label{eq:BDGp2InInEqul}
  \begin{split}
    \E \sup_{t \in [0,T]} &\left| \int_0^t J_5^2 dW
       \right|^{p/2} \\
       \leq& c\E \left(\int_0^T  \langle \sigma^n (\bar{U}^n + U^n_*) 
                   , A \bar{U}^n \rangle ^2 dt'\right)^{p/4}
        \leq c\E \left(\int_0^T  (1 + \|\bar{U}^n\|^2 + \|U^n_*\|^2)
                  \|\bar{U}^n\|^2  dt'\right)^{p/4}\\
        \leq& c\E \left(\int_0^T  (1 + \|\bar{U}^n\|^4 + \|U^n_*\|^4)dt'\right)^{p/4}
        \leq c\E \int_0^T  (1 + \|\bar{U}^n\|^p + \|U^n_*\|^p)dt'.
 \end{split}
\end{equation}
Applying \eqref{eq:returntoOZahhI1}, \eqref{eq:nonlinearEstMods}
and \eqref{eq:BDGp2InInEqul}  to \eqref{eq:returntoOZahh} we have
\begin{equation}\label{eq:returntoOZahhIConclusion}
  \begin{split}
    \E \biggl(\int_0^T& |A \bar{U}|^2 dt' \biggr)^{p/2}\\
    \leq& \frac{1}{2} \E
     \left( \int_0^T |A\bar{U}|^2 ds \right)^{p/2} \! \! \!
        +
       c \E  \int_0^T  (1+ \|U_*^n\|^p  +  \| \bar{U}^n \|^p) dt'
       +c\E \left( \int_{0}^{T} \|U^n_*\|^2 |AU^n_*|^2 ds \right)^{p/2}\\
    \leq& \frac{1}{2} \E
     \left( \int_0^T |A\bar{U}|^2 ds \right)^{p/2} 
        \! +
       c \E \sup_{t \in [0,T]} (1 + \|U_*^n\|^p  +  \| \bar{U}^n \|^p)
       + \E \|U_0\|^{2p}.
\end{split}
\end{equation}
Note that the terms involving
$\|U^n_*\|^2|AU^n_*|^2$ are treated in the final
inequality using \eqref{eq:LpTimeEstimatesUstar} and are responsible
for the second part of the moment condition \eqref{eq:DataMomentConditionMartingale}.
Applying \eqref{eq:returntoOZahhIConclusion} in turn to
\eqref{eq:UnifromGalEstMoreProblems} we finally conclude
\eqref{eq:UniformGalEst1}.  With \eqref{eq:LpTimeEstimatesUstar},
\eqref{eq:UniformGalEst2} also now follows from 
\eqref{eq:returntoOZahhIConclusion}

The bound~\eqref{eq:uniformFracTmEstGalerkin} is a
direct application of \eqref{eq:BDGfrac} with \eqref{eq:BndCondSig}:
\begin{equation}\label{eq:BDGfrackapplication}
  \begin{split}
   \E \left(  
   \left| \int_0^{t} \sigma^n(U^n) dW \right|_{W^{\alpha, p}([0,T];H)}^p
      \right)
       \leq&c
         \E
       \int_0^{T} |\sigma^n(U^n)|_{L_2(\mathfrak{U}, H)}^pdt 
       \leq c
         \E 
       \int_0^{T} (1 + |U^n|^p) dt.\\
\end{split}
\end{equation}
We finally establish~\eqref{eq:UniformDrftyEst}.
Integrating (\ref{eq:GalerkinCutoff}) we observe that
\begin{equation}\label{eq:IntegratedMinusNoise}
  \begin{split}
  U^n&(t) - \int_0^t\sigma^n(U^n) dW  
  =   U^n_0 +
    \int_0^t [A U^n + \theta( \| U^n - U^n_*  \| ) B^n(U^n) + F^n(U^n) ]dt.
  \end{split}
\end{equation}
With,  (\ref{eq:sublinerConF}), (\ref{eq:Bprop3}) we infer:
\begin{equation}  \label{eq:W12NonStochasticEst}
  \begin{split}
   \biggl| U^n(t) - \int_0^t\sigma^n(U^n) dW \biggr|_{W^{1, 2}([0,T];H)}^2
    \leq&
    c |U_0|^2 + c\int_0^T (|AU^n|^2 + |B^n(U^n)|^2 + |F^n(U^n)|^2)ds\\
    \leq&
    c |U_0|^2 + c\int_0^T (|AU^n|^2 + |B(U^n)|^2 + |F(U^n)|^2)ds\\
    \leq&
    c |U_0|^2 + c\int_0^T (1 + \|U^n\|^2)(1 +|AU^n|^2) ds.
  \end{split}
\end{equation}
Taking expected values in this expression and applying \eqref{eq:UniformGalEst1} (i) for
the case $p =4$ gives \eqref{eq:UniformDrftyEst}.  The
proof is now complete.
\end{proof}

\section{Local Existence of Martingale Solutions}
\label{sec:local-existence}

In this section we establish the existence of a Martingale solution of
\eqref{eq:AbsEq}.  The first step is to make use of the uniform estimates
established in Lemma~\ref{thm:UniformEstimateGalerkin} we infer
the compactness (in certain spaces) of the probability laws associated
to the Galerkin approximations.  We then change the underlying
probabilistic basis in order to find a new sequence of random elements
equal in law to the original Galerkin approximations but which
converge almost surely. The technical details of the passage 
to the limit, which is used also below for the case of pathwise solutions,
is carried out in Section~\ref{sec:passToLim} below.

\subsection{Compactness Arguments}
\label{sec:compArgs}

For a given initial distribution $\mu_0$ on $V$ we fix a stochastic basis 
$\mathcal{S} = (\Omega, \mathcal{F},$ $\{\mathcal{F}_t\}_{t \geq
  0}, \mathbb{P}, W)$ upon which is defined an $\mathcal{F}_0$ measurable
random element $U_0$ with distribution $\mu_0$.  Consider the sequence
of Galerkin approximations $\{U^n\}$ solving \eqref{eq:GalerkinCutoff}
relative to this basis and initial condition. We consider the phase
space:
\begin{equation}\label{eq:phaseSpaceStrongConv}
  \begin{split}
  \mathcal{X}_U = (L^2(0,T; V) \cap C([0,T], V'), \quad
  \mathcal{X}_W = C([0,T],\mathfrak{U}_0), \quad
     \mathcal{X} &= \mathcal{X}_U \times \mathcal{X}_W.
  \end{split}
\end{equation}
We may think of the first component, $\mathcal{X}_U$, of this phase space as 
the set where the
solution $U^n$ lives and the second component, $\mathcal{X}_W$,
as being the set on which the driving Brownian motions are defined. We
consider the probability measures 
\begin{equation}\label{eq:solnMeasures}
  \mu^n_{U}(\cdot) = \mathbb{P}( U^n \in \cdot ) \in Pr(  L^2([0,T]; V) \cap C([0,T],V')),
\end{equation}
and
\begin{equation}\label{eq:BMMeasures}
  \mu_W(\cdot) = \mu^n_W(\cdot) = \mathbb{P}(W \in \cdot) \in Pr( C([0,T], \mathfrak{U}_0)).
\end{equation}
This defines a sequence of probability measures
$\mu^n =
\mu^n_{U} \times \mu^n_{W}$ on the phase space $\mathcal{X}$.  By
making appropiate use of Lemma~\ref{thm:UniformEstimateGalerkin} we will
now show that this sequence is tight.  More precisely:
\begin{Lem}\label{thm:tightness1pt0}
  Suppose that $\mu_{0}$ satisfies \eqref{eq:U0CondMartingale} with $q \geq 8$. Consider 
  the measures $\mu^n$ on $\mathcal{X}$ defined according to \eqref{eq:solnMeasures}, 
  \eqref{eq:BMMeasures}. Then the sequence $\{\mu^{n}\}_{n \geq 1}$ is tight and therefore weakly
  compact over the phase space $\mathcal{X}$.
\end{Lem}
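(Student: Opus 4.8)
The plan is to establish tightness of $\{\mu^n\}_{n\geq 1}$ on $\mathcal{X} = \mathcal{X}_U \times \mathcal{X}_W$ by treating the two factors separately, since $\mu^n = \mu^n_U \times \mu^n_W$. The $\mathcal{X}_W$ component is trivial: $\mu^n_W = \mu_W$ is a single fixed measure (the law of $W$ in $C([0,T],\mathfrak{U}_0)$), and a single Borel probability measure on a Polish space is automatically tight. So the entire burden is on showing that $\{\mu^n_U\}_{n\geq 1}$ is tight on $\mathcal{X}_U = L^2(0,T;V)\cap C([0,T],V')$.

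For the $\mathcal{X}_U$ component I would produce, for each $\varepsilon>0$, a compact set $K_\varepsilon \subset \mathcal{X}_U$ with $\mu^n_U(K_\varepsilon)\geq 1-\varepsilon$ for all $n$. The natural choice is a ball in a space that embeds compactly into $\mathcal{X}_U$, built from the uniform estimates of Lemma~\ref{thm:UniformEstimateGalerkin}. Concretely, I would take $q=8$, hence $p=4$ is admissible in Lemma~\ref{thm:UniformEstimateGalerkin}, and write $U^n = \left(U^n - \int_0^\cdot \sigma^n(U^n)\,dW\right) + \int_0^\cdot \sigma^n(U^n)\,dW$. The estimates \eqref{eq:UniformGalEst1}, \eqref{eq:UniformGalEst2} give a uniform bound on $\E\!\left(\int_0^T |AU^n|^2\,dt\right)$, i.e. a uniform bound in $L^1(\Omega; L^2(0,T;D(A)))$; the estimate \eqref{eq:UniformDrftyEst} gives a uniform bound on the drift-plus-initial part in $L^2(\Omega; W^{1,2}(0,T;H))$; and \eqref{eq:uniformFracTmEstGalerkin} gives a uniform bound on the stochastic integral part in $L^p(\Omega; W^{\alpha,p}(0,T;H))$ for any $\alpha\in(0,1/2)$. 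Adding the last two, $U^n$ is uniformly bounded in expectation in $W^{\alpha,p}(0,T;H)$ for a suitable $\alpha, p$ (taking $p=2$, any $\alpha<1/2$, or $p=4$ via \eqref{eq:BDGfrac}). Thus for a constant $M>0$ the set
\begin{displaymath}
  K_M := \left\{ U : \int_0^T |AU|^2\,dt \leq M,\ |U|_{W^{\alpha,p}(0,T;H)} \leq M \right\}
\end{displaymath}
captures all but $\varepsilon$ of the mass of $\mu^n_U$ uniformly in $n$, by Chebyshev applied to the two moment bounds (choosing $M$ large relative to $1/\varepsilon$ times the uniform moment constants $K$).

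It then remains to check that $K_M$ is compact in $\mathcal{X}_U = L^2(0,T;V)\cap C([0,T],V')$. For the $L^2(0,T;V)$ factor I would invoke Lemma~\ref{thm:FlanGatComp}(i) with $X_1 = D(A)$, $X_0 = V$, $X_2 = H$: since $D(A)\subset\subset V$ and all three spaces are reflexive Hilbert spaces, $L^2(0,T;D(A))\cap W^{\alpha,2}(0,T;H)\subset\subset L^2(0,T;V)$. For the $C([0,T],V')$ factor I would use Lemma~\ref{thm:FlanGatComp}(ii) with $Y = H$, $Y_0 = V'$ (noting $H\subset\subset V'$ since $V\subset\subset H$): provided $\alpha p>1$, $W^{\alpha,p}(0,T;H)\subset\subset C([0,T],V')$; with $p=4$ one only needs $\alpha>1/4$, which is compatible with $\alpha<1/2$ in \eqref{eq:BDGfrac}. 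Bounded closed subsets of these intersections are thus precompact, and $K_M$ is closed, hence compact in $\mathcal{X}_U$. Taking products of the $U$-side compact sets with the (fixed, tight) $W$-side, and using that a finite/countable product of compacts is compact, yields the compact sets for $\mathcal{X}$; weak compactness of $\{\mu^n\}$ then follows from Prokhorov's theorem, Proposition~\ref{thm:ProhorovSkorhod}(i).

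The main technical point — not really an obstacle but the step requiring care — is bookkeeping the exponents: one must pick $p$ and $\alpha$ (with $\alpha<1/2$ forced by the fractional BDG inequality \eqref{eq:BDGfrac}, and $\alpha p>1$ forced by the compact embedding into $C([0,T],V')$) consistently, which is why $q\geq 8$ is assumed so that $p=4$ is available in Lemma~\ref{thm:UniformEstimateGalerkin}; then $\alpha\in(1/4,1/2)$ works. One also has to be mildly careful that $U^n$ is a.s. continuous into $H$ (hence into $V'$), so that $U^n$ genuinely defines a random variable valued in $\mathcal{X}_U$ and the laws $\mu^n_U$ in \eqref{eq:solnMeasures} make sense; this is immediate from the regularity of the Galerkin solutions in $C([0,T],H_n)$.
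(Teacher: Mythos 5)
Your proposal is correct and follows essentially the same route as the paper: product structure of $\mu^n$, Chebyshev with the uniform estimates \eqref{eq:UniformGalEst1}, \eqref{eq:UniformDrftyEst}, \eqref{eq:uniformFracTmEstGalerkin}, the decomposition of $U^n$ into drift and stochastic-integral parts, and the two Flandoli--Gatarek embeddings of Lemma~\ref{thm:FlanGatComp}. The only cosmetic difference is that the paper keeps the two pieces of $U^n$ in separate balls and uses the sumset $B_R^{2,1}+B_R^{2,2}$ for the $C([0,T],V')$ compactness, whereas you absorb the $W^{1,2}$ drift bound into a single $W^{\alpha,p}$ norm via the (true, but worth verifying) embedding $W^{1,2}([0,T];H)\subset W^{\alpha,4}([0,T];H)$ for $\alpha<1/2$.
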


\begin{proof}
By applying Lemma~\ref{thm:FlanGatComp}, (i) with $X_{-1} = H$, $X_0 = V$,
$X_1 = D(A)$, $p = 2$ and $\alpha = 1/4$ we deduce that
\begin{displaymath}
  L^2([0,T]; D(A)) \cap W^{1/4,2}([0,T]; H)       
       \subset \subset L^2([0,T];V).
\end{displaymath}
For $R > 0$ we define the set
\begin{displaymath}
\begin{split}
  B_R^1 = \{U \in  L^2([0,T]; D(A)) &\cap W^{1/4,2}([0,T]; H) : 
  |U|^2_{L^2([0,T]; D(A))}  + |U|^2_{W^{1/4,2}([0,T]; H)} \leq R^2\}
\end{split}
\end{displaymath}
which is thus compact in $L^2([0,T],V)$.  Due to the Chebyshev
inequality and the uniform estimates (\ref{eq:UniformGalEst1}), \eqref{eq:UniformDrftyEst},
(\ref{eq:uniformFracTmEstGalerkin}) in the case $p = 2$, we estimate,
\begin{equation}\label{eq:ChebBndTight1}
  \begin{split}
    \mu^n_{U}( (B_R^1)^C) 
    =& \Prb( |U^n|^2_{L^2([0,T]; D(A))}  + |U^n|^2_{W^{1/4,2}([0,T];
     H)} \geq R^2)\\
    \leq& \Prb( |U^n|^2_{L^2([0,T]; D(A))}  \geq R^2/2) +
           \Prb(|U^n|^2_{W^{1/4,2}([0,T];H)} \geq R^2/2)\\
    \leq& \frac{2}{R^2}
    \E\left( \int_0^T |AU^n|^2 dt'  + \left| U^n
      \right|_{W^{\frac{1}{4},2}([0,T];H)}^2 \right)
    \leq \frac{c}{R^2},
  \end{split}
\end{equation}
where the numerical constant $c$ is independent of $n$.

Choose $\alpha \in (1/q, 1/2)$ so that $\alpha q > 1$.  By Lemma~\ref{thm:FlanGatComp}, (ii) with
$Y_0 = V' = D(A^{-1/2})$ and $Y
= H$ we infer the compact embeddings 
\begin{displaymath}
  W^{1, 2}([0,T]; H) \subset \subset C([0,T],V'), \quad
  W^{\alpha, q}([0,T]; H) \subset \subset C([0,T],V').
\end{displaymath}
For $R > 0$, we take $B_R^{2,1}$ and $B_R^{2,2}$ to be the balls of
radius $R$ in $W^{1, 2}([0,T],H)$ and $W^{\alpha, q}([0,T], H)$
respectively.  It follows that for  $R > 0$, $B_R^2 := B_R^{2,1} +
B_R^{2,2}$ is compact in $C([0,T],V')$.  Since indeed,
\begin{displaymath}
   \{ U^{n} \in B_{R}^{2} \} \supset
   \left\{ U^n(t) - \int_0^t \sigma^n (U^n)dW \in B_R^{2,1} \right\} \cap 
   \left\{   \int_0^{t} \sigma^n(U^n) dW \in B_R^{2,2}\right\},
\end{displaymath}
we may apply Chebyshev's inequality and then the
uniform estimates \eqref{eq:UniformDrftyEst}
\eqref{eq:uniformFracTmEstGalerkin} to infer
\begin{equation}\label{eq:ChebBndTight2}
  \begin{split}
    \mu^n_{U}( (B_R^2)^C) 
       \leq& 
       \Prb \left( 
         \left|U^n(t) - \int_0^t \sigma^n (U^n)dW
         \right|_{W^{1,2}([0,T];H)}^2 
         \geq R^2 \right)\\
       &+ 
       \Prb \left(  \left| \int_0^{t} \sigma^n(U^n) dW
         \right|_{W^{\alpha, q}([0,T];H)}^q
         \geq R^q \right)\\
       \leq& \frac{c}{R^2}.
  \end{split}
\end{equation}
As above the $c$ is independent of $n$.

It is not hard to see\footnote{One need only verify that if
  $\{U^n\}_{n\geq 0} \subset L^2(0,T; V) \cap C([0,T], V')$ and if
   $U^n \rightarrow U$  in $L^2(0,T; V)$,
    $U^n \rightarrow \tilde{U}$  in  $C([0,T], V')$
that $U = \tilde{U}$} 
that $B^1_R \cap B^2_R$ is compact in
$L^2(0,T; V) \cap C([0,T], V')$ for every $R > 0$.   As a consequence
of (\ref{eq:ChebBndTight1}) and (\ref{eq:ChebBndTight2}) we have
\begin{displaymath}
  \mu^n_{U}((B_R^1 \cap B_R^2)^C) \leq \mu^n_{U}((B_R^1)^C) + \mu^n_{U}((B_R^2)^C)
  \leq \frac{c}{R^2}
\end{displaymath}
We therefore take $A_\epsilon := B^1_{\sqrt{2c/\epsilon}} \cap
B^2_{\sqrt{2c/\epsilon}}$, with $c$ the constant which appears on the left
hand side immediately
above.  With this definition we infer that for $\epsilon > 0$,
\begin{equation}\label{eq:ChebFinal1}
  \mu^n_{U}(A_\epsilon) \geq 1-\frac{\epsilon}{2},
\end{equation}
over all $n$.

We next turn to the sequence $\{\mu^n_W\}_{n\geq 0}$.  This sequence is
constantly equal to one element and is thus
weakly compact.  Hence, as a consequence of \physD{Prohorov's Theorem (see e.g. \cite{ZabczykDaPrato1}),}\arxiv{Proposition~\ref{thm:ProhorovSkorhod}, (i)} $\{\mu^n_W\}_{n\geq 0}$ must be
tight.  We therefore infer the existence of collection of compact sets
$\tilde{A}_\epsilon \subset C([0,T], \mathfrak{U}_0)$ so that
\begin{equation}\label{eq:ChebFinal2}
  \mu^n_W(\tilde{A}_\epsilon) \geq 1 - \frac{\epsilon}{2}    
\end{equation}
for all $n$.

We now have everything in hand to conclude the tightness and therefore
the weak compactness of $\{\mu^n\}_{n \geq 0}$.  For $\epsilon > 0$ we
define $\mathcal{K}_\epsilon := A_\epsilon \times \tilde{A}_\epsilon$
which are compact in $\mathcal{X}$.  By (\ref{eq:ChebFinal1}) and
(\ref{eq:ChebFinal2}) we infer that, for any $\epsilon > 0$ and every
$n$, 
\begin{displaymath}
  \mu^n(\mathcal{K}_\epsilon) \geq 1 -\epsilon
\end{displaymath}
and thus that $\{\mu^n\}_{n \geq 0}$ is tight in $\mathcal{X}$.    Prohorov's theorem\arxiv{, given herein
as Proposition~\ref{thm:ProhorovSkorhod}} therefore implies that
$\mu^n$ is weakly compact.  The proof is therefore complete.
\end{proof}

\subsubsection{Strong Convergence on the Skorohod Space}
\label{sec:strong-conv-skor}

Given $\mu_0$ (satisfying \eqref{eq:U0CondMartingale} with $q \geq 8$) we
have shown that the sequence of measures $\{\mu^n\}_{n \geq 1}$ associated to the Galerkin sequence
$(U^n,W)$ are weakly compact on $\mathcal{X}$.  Passing to a weakly convergent 
subsequence $\mu^{n_k}$ we now 
apply the Skorohod embedding theorem, 
\arxiv{Proposition~\ref{thm:ProhorovSkorhod},}\physD{see e.g. \cite{ZabczykDaPrato1},}
 to infer the following Proposition.
\begin{Prop}\label{thm:SubsequentialConv}
  Suppose that $\mu_0$ is a probability measure on $V$ 
  satisfying \eqref{eq:U0CondMartingale} with $p > 4$.   Then there
  exists a probability space $(\tilde{\Omega}, \tilde{\mathcal{F}},
  \tilde{\Prb})$ and a
  subsequence $n_k$ and a sequence of $\mathcal{X}$ valued random
  variables $(\tilde{U}^{n_k}, \tilde{W}^{n_k})$ such that
  \begin{itemize}
  \item[(i)] $(\tilde{U}^{n_k}, \tilde{W}^{n_k})$ converges almost
    surely, in the topology of $\mathcal{X}$, to an element $(\tilde{U}, \tilde{W})$.
  \item[(ii)] $\tilde{W}^{n_k}$ is a cylindrical Wiener process,
    relative to the filtration $\mathcal{F}_t^{m_k}$, given by the
    completion of $\sigma(\tilde{W}^{m_k}(s),$ $\tilde{U}^{m_k}(s); s
    \leq t)$.
\item[(iii)] Each pair $(\tilde{U}^{n_k}, \tilde{W}^{n_k})$ satisfies
    \begin{equation}\label{eq:GalerkinCutoffNewBasis}
      \begin{split}
        d \tilde{U}^{n_k} + [A \tilde{U}^{n_k} + \theta( \| \tilde{U}^{n_k} - \tilde{U}_*^{n_k} \| ) 
           &B^{n_k}(\tilde{U}^{n_k}) + F^{n_k}(\tilde{U}^{n_k}) ]dt
           =\sigma^{n_k}(\tilde{U}^{n_k}) d\tilde{W}^{n_k},\\
           \tilde{U}^{n_k}(0) &= P_{n_k} \tilde{U}(0)^{n_k} := \tilde{U}^{n_k}_0,
      \end{split}
    \end{equation}
    where we define $\tilde{U}_*^{n_k}$ by:
    \begin{equation}\label{eq:AuxLinearmodNewBasis}
      \frac{d}{dt} \tilde{U}_*^{n_k} + A \tilde{U}_*^{n_k} = 0 
      \quad \tilde{U}_*^{n_k}(0) = \tilde{U}^{n_k}_0.
    \end{equation}
  \end{itemize}
\end{Prop}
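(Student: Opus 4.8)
The plan is to upgrade the tightness established in Lemma~\ref{thm:tightness1pt0} to almost sure convergence via a Skorohod representation, and then to re-identify the limiting random variables as solutions of the Galerkin system posed over the new stochastic basis. First, by Lemma~\ref{thm:tightness1pt0} the family $\{\mu^{n}\}_{n\geq 1}$ is weakly compact on $\mathcal{X}$; passing to a weakly convergent subsequence (which we continue to label $\mu^{n_k}$), with limit $\mu$, and invoking Proposition~\ref{thm:ProhorovSkorhod}~(ii) produces a probability space $(\tilde{\Omega},\tilde{\mathcal{F}},\tilde{\Prb})$ carrying $\mathcal{X}$-valued random variables $(\tilde{U}^{n_k},\tilde{W}^{n_k})$, each with law $\mu^{n_k}$ (in particular with the same joint law as $(U^{n_k},W)$ on $\mathcal{X}$), such that $(\tilde{U}^{n_k},\tilde{W}^{n_k})\to(\tilde{U},\tilde{W})$ $\tilde{\Prb}$-almost surely in $\mathcal{X}$; this is item~(i). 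Since $\mu^{n_k}_U$ is supported on paths taking values in the finite-dimensional space $H_{n_k}$ and evaluation at a time $t$ is continuous on $\mathcal{X}_U$ through its $C([0,T];V')$ factor, each $\tilde{U}^{n_k}$ is again an $H_{n_k}$-valued, $V'$-continuous process; thus $\tilde{U}^{n_k}(0)$ is meaningful with $P_{n_k}\tilde{U}^{n_k}(0)=\tilde{U}^{n_k}(0)$ and the same law as $P_{n_k}U_0$, while $\tilde{U}_*^{n_k}$ is defined from it pathwise through \eqref{eq:AuxLinearmodNewBasis}.

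For item~(ii), set $\tilde{\mathcal{F}}_t^{n_k}$ equal to the completion of $\sigma(\tilde{W}^{n_k}(s),\tilde{U}^{n_k}(s):s\leq t)$. The key point is that the property ``$\tilde{W}^{n_k}$ is a cylindrical Wiener process over $\mathfrak{U}$ relative to $\tilde{\mathcal{F}}_t^{n_k}$'' depends only on the joint law of $(\tilde{U}^{n_k},\tilde{W}^{n_k})$ on $\mathcal{X}$, hence holds because it holds for $(U^{n_k},W)$ on the original basis: there $U^{n_k}$ is $\mathcal{F}_t$-adapted, so $\sigma(W(r),U^{n_k}(r):r\leq s)\subset\mathcal{F}_s$, and since $W$ is a cylindrical Wiener process for $\mathcal{F}_t$ it is one for this smaller filtration, hence also for its completion. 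I would record this through the identities $\E[(W_j(t)-W_j(s))\,\gamma]=0$ and $\E[((W_j(t)-W_j(s))(W_l(t)-W_l(s))-(t-s)\delta_{jl})\,\gamma]=0$, valid for $0\leq s<t$, all $j,l$, and every bounded continuous functional $\gamma$ of the restriction of $(U^{n_k},W)$ to $[0,s]$; the integrands are measurable with Gaussian-controlled tails, so these expectations are determined by the law on $\mathcal{X}$ and transfer verbatim to $(\tilde{U}^{n_k},\tilde{W}^{n_k})$. Lévy's characterization then yields~(ii).

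Item~(iii) is the crux and demands the most care, because the It\={o} integral against $W$ is not recovered by a continuous map on path space, and \eqref{eq:GalerkinCutoff} therefore cannot simply be pushed through the Skorohod construction. I would introduce
\[
\tilde{M}^{n_k}(t):=\tilde{U}^{n_k}(t)-\tilde{U}^{n_k}(0)+\int_0^t\bigl(A\tilde{U}^{n_k}+\theta(\|\tilde{U}^{n_k}-\tilde{U}_*^{n_k}\|)B^{n_k}(\tilde{U}^{n_k})+F^{n_k}(\tilde{U}^{n_k})\bigr)\,ds,
\]
an $H_{n_k}$-valued, $\tilde{\mathcal{F}}_t^{n_k}$-adapted continuous process (the drift integrand is continuous and at most polynomially growing in $\tilde{U}^{n_k}$, and the bounds of Lemma~\ref{thm:UniformEstimateGalerkin}, valid for $\tilde{U}^{n_k}$ by equality of laws, supply the integrability). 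On the original basis, integrating \eqref{eq:GalerkinCutoff} shows that the analogous process $M^{n_k}(t):=U^{n_k}(t)-U^{n_k}(0)+\int_0^t(\ldots)\,ds$ equals $\int_0^\cdot\sigma^{n_k}(U^{n_k})\,dW$, which is an $\mathcal{F}_t$-martingale whose cross-variation with each component $W_j$ is $\int_0^\cdot\sigma^{n_k}(U^{n_k})e_j\,ds$ and whose angle-bracket process equals $\int_0^\cdot\sigma^{n_k}(U^{n_k})(\sigma^{n_k}(U^{n_k}))^{*}\,ds$; each such assertion is again expressible through expectations of (bounded continuous functionals of) the paths and so passes to $\tilde{M}^{n_k},\tilde{U}^{n_k},\tilde{W}^{n_k}$. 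Since $\tilde{W}^{n_k}$ is now an $\tilde{\mathcal{F}}_t^{n_k}$-cylindrical Wiener process and $\tilde{U}^{n_k}$ is adapted, $\int_0^t\sigma^{n_k}(\tilde{U}^{n_k})\,d\tilde{W}^{n_k}$ is well defined, and a standard computation shows that the continuous martingale $\tilde{M}^{n_k}(t)-\int_0^t\sigma^{n_k}(\tilde{U}^{n_k})\,d\tilde{W}^{n_k}$ has vanishing quadratic variation and is therefore identically zero, which is precisely \eqref{eq:GalerkinCutoffNewBasis}. An essentially equivalent alternative is to identify $\int_0^t\sigma^{n_k}(U^{n_k})\,dW$ with the $L^2$-limit of its left-endpoint Riemann sums, which are continuous functionals of $(U^{n_k},W)$ and hence have matching laws, and then to pass to an almost surely convergent subsequence on $\tilde{\Omega}$.

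The principal obstacle is exactly step~(iii): the Galerkin identity must be re-established on $(\tilde{\Omega},\tilde{\mathcal{F}},\tilde{\Prb})$ by indirect means, and the technical weight lies in carefully verifying that the martingale property together with the cross- and angle-bracket variations of the relevant processes (or, in the alternative route, the Riemann-sum identities) are genuinely determined by the law on $\mathcal{X}$, and in the attendant measurability and uniform-integrability checks at each finite level $n_k$. By contrast, item~(i) is immediate from Lemma~\ref{thm:tightness1pt0} and Proposition~\ref{thm:ProhorovSkorhod}, and item~(ii) rests only on the elementary fact that the Galerkin solution is adapted to the Brownian filtration.
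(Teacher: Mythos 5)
Your proposal is correct and follows essentially the same route as the paper: item (i) is obtained exactly as in the text from Lemma~\ref{thm:tightness1pt0} together with Prohorov's and Skorohod's theorems, while for items (ii) and (iii) the paper merely defers to the literature (Remark~\ref{rmk:ProofStrongConv}, citing Bensoussan), and the martingale-identification argument you supply --- transferring the L\'evy characterization, the cross-variations with each $W_j$, and the angle bracket of $\tilde{M}^{n_k}$ by equality of laws, then killing the quadratic variation of the difference --- is precisely the standard argument of that reference. No gaps; you have in fact written out more detail than the paper does.
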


With this proposition established the existence of a local
Martingale solution follows once 
we have shown that $(\tilde{U}, \tilde{W})$ and an
appropriately defined stopping time $\tau$ 
(see \eqref{eq:firstExistenceTime}) satisfy 
\eqref{eq:AbsEq}.  This passage to the limit 
argument, which is technical and delicate, is carried out
in Section~\ref{sec:passToLim} below.

\begin{Rmk}\label{rmk:ProofStrongConv}
While Proposition~\ref{thm:SubsequentialConv}, (i)
follows directly from \arxiv{Proposition~\ref{thm:ProhorovSkorhod}, (ii)}\physD{Skorohod's theorem (see, for example, \cite{ZabczykDaPrato1}),}
further steps are required
to establish (ii), (iii).  These technical points may be demonstrated
in a similar manner to previous works.  See \cite{Bensoussan1}.
\end{Rmk}

\section{Local Pathwise Solutions}
\label{sec:localmax-pathwisesolns}

We turn now to study Pathwise solutions of \eqref{eq:AbsEq}.    Here the 
key step is to apply Proposition~\ref{thm:GyongyKry} in order to show that 
$(U^{n}, W)$ converges almost surely in $L^{2}([0,T]; V) \cap C([0,T], V')$ 
relative to the initial stochastic basis.  The diagonal condition,
\eqref{eq:diagonalCondGK} translates to a question of pathwise uniqueness
which we address first.

\subsection{Local Pathwise Uniqueness}
\label{sec:local-pathw-uniq}

The following proposition establishes the uniqueness, pathwise, for any pair of
solutions of the modified system \eqref{eq:modFullSystemCut}.   Such
solutions appear in an intermediate step in the compactness arguments 
in Section~\ref{sec:compRevist} below.
\begin{Prop}\label{thm:Uniqueness}
  Suppose that $(\mathcal{S}, U^{(1)})$ and $(\mathcal{S}, U^{(2)})$ are two global
  Martingale solutions of \eqref{eq:modFullSystemCut} relative to the same stochastic basis
  $\mathcal{S} := (\Omega, \mathcal{F}, \{\mathcal{F}_t\}_{t \geq
    0},$ $\mathbb{P}, W)$.  Assume that, in addition to the conditions imposed
  in Definition~\ref{def:MartingaleSoln}, $F$ and $\sigma$ satisfy the
  Lipschitz conditions (\ref{eq:lipConF}) and (\ref{eq:lipCondSig}). Define
  \begin{equation}\label{eq:InitialDataEqSet}
    \Omega_0 = \{U^{(1)}(0) = U^{(2)}(0) \}.
  \end{equation}
  Then $U^{(1)}$ and $U^{(2)}$ are indistinguishable on $\Omega_0$ 
  in the sense that
   \begin{equation}\label{eq:uniquenessCutoffSolns}
    \Prb \left( \indFn{\Omega_0} ( U^1(t) - U^2(t)  = 0; \forall t
      \geq 0 \right) = 1.
  \end{equation}
\end{Prop}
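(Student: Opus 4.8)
The plan is to derive an equation for the difference $U := U^{(1)} - U^{(2)}$, apply the It\^o formula to $\|U\|^2$ (equivalently to $|A^{1/2}U|^2$), and then close a Gronwall-type estimate after localizing by a suitable sequence of stopping times. Setting $U := U^{(1)} - U^{(2)}$ and $U_i := U^{(i)}$, subtracting the two copies of \eqref{eq:modFullSystemCut} gives
\begin{equation*}
  dU + \bigl(AU + \theta(\|U_1 - U_*\|)B(U_1) - \theta(\|U_2 - U_*\|)B(U_2) + F(U_1) - F(U_2)\bigr)dt = (\sigma(U_1) - \sigma(U_2))dW,
\end{equation*}
with $U(0) = 0$ on $\Omega_0$. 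Note that both solutions share the same auxiliary process $U_*$ (same basis, and on $\Omega_0$ the same initial data feeding \eqref{eq:linearAuxSystem}), which is what makes the cutoff arguments line up. First I would apply $A^{1/2}$ and use the It\^o formula in $V$ to obtain, schematically,
\begin{equation*}
  d\|U\|^2 + 2|AU|^2\,dt = \bigl(-2\langle \text{(nonlinear difference)}, AU\rangle - 2\langle F(U_1)-F(U_2), AU\rangle + |\sigma(U_1)-\sigma(U_2)|^2_{L_2(\mathfrak U,V)}\bigr)dt + dM,
\end{equation*}
where $M$ is the martingale coming from the stochastic term.

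The main obstacle is the nonlinear term, since we only have the bilinear estimates \eqref{eq:Bprop2}, \eqref{eq:Bprop3} and there is no a priori $L^\infty_t D(A)$ control on the solutions — only $L^2_t D(A)$ and $L^\infty_t V$. The key device is the cutoff: I would write the nonlinear difference as
\begin{equation*}
  \theta(\|U_1-U_*\|)\bigl(B(U_1) - B(U_2)\bigr) + \bigl(\theta(\|U_1-U_*\|) - \theta(\|U_2-U_*\|)\bigr)B(U_2),
\end{equation*}
and expand $B(U_1) - B(U_2) = B(U, U_1) + B(U_2, U)$ by bilinearity. Pairing against $AU$ and using \eqref{eq:Bprop2} in the form $|\langle B(U,U_1),AU\rangle| \le c_0\|U\|\,|AU_1|\,\|AU\|$ — wait, that has the wrong regularity on the last slot; instead I would use \eqref{eq:Bprop3}-type bounds, e.g. $|\langle B(U_2,U),AU\rangle| \le c_0\|U_2\|^{1/2}|AU_2|^{1/2}\|U\|^{1/2}|AU|^{1/2}|AU|$, absorbing a $\frac14|AU|^2$ on the left and leaving a factor $\|U_2\|^2|AU_2|^2\|U\|^2$ which is integrable in time by the a priori bounds on $U_2$. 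For the term $B(U,U_1)$ paired with $AU$, \eqref{eq:Bprop2} gives $c_0\|U\|\,|AU_1|\,\|U\|$ — no wait, \eqref{eq:Bprop2} requires the middle argument in $D(A)$, so $|\langle B(U,U_1),AU\rangle|$ does not directly apply; the safe route is to always arrange the middle slot to be whichever of $U_1,U_2$ carries the $|A\cdot|$ and the outer slots to be $U$ and $AU$, splitting $B(U_1)-B(U_2)$ accordingly so that each term reads $B(U, U_i)$ or $B(U_i, U)$ with the $D(A)$-regular factor being $U_i$. The cutoff-difference term is handled by $|\theta(\|U_1-U_*\|)-\theta(\|U_2-U_*\|)| \le \|\theta'\|_\infty \|U\|$ together with the fact that on the support of $\theta(\|U_2-U_*\|)$ we have $\|U_2 - U_*\| \le 2\kappa$, hence $\|U_2\|$ is controlled by $\|U_*\| + 2\kappa$, and then \eqref{eq:Bprop3} gives $|B(U_2)| \le c_0\|U_2\|^{1/2}|AU_2|^{3/2}$, pairing with $AU$ and absorbing $\frac14|AU|^2$ to leave something like $\|U\|^2$ times an integrable coefficient.

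The Lipschitz term $F(U_1)-F(U_2)$ paired with $AU$ is bounded by \eqref{eq:lipConF} as $c\|U\|\,|AU| \le \frac14|AU|^2 + c\|U\|^2$, and the It\^o correction $|\sigma(U_1)-\sigma(U_2)|^2_{L_2(\mathfrak U,V)} \le c\|U\|^2$ by \eqref{eq:lipCondSig}. After absorbing the $|AU|^2$ contributions into the good term on the left, I arrive at
\begin{equation*}
  d\|U\|^2 + |AU|^2\,dt \le g(t)\|U\|^2\,dt + dM, \qquad g(t) := c\bigl(1 + \|U_1\|^2|AU_1|^2 + \|U_2\|^2|AU_2|^2 + \ldots\bigr),
\end{equation*}
where $g \in L^1([0,T])$ almost surely by the regularity \eqref{eq:regularityCondMG} of martingale solutions. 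To make this rigorous I would introduce $\tau_R = \inf\{t : \int_0^t(|AU_1|^2 + |AU_2|^2)ds \ge R\} \wedge R$, multiply by $\indFn{\Omega_0}$, stop at $\tau_R$, take expectations (the stopped martingale has zero mean, and the $\indFn{\Omega_0}$ factor is $\mathcal F_0$-measurable so it passes through), and apply the deterministic Gronwall inequality $\omega$-wise or in expectation using that $\int_0^{\tau_R} g\,dt$ is bounded. This yields $\E(\indFn{\Omega_0}\sup_{t \le \tau_R}\|U(t)\|^2) = 0$ for every $R$; since $\tau_R \uparrow \infty$ a.s. (as $U_1, U_2$ are global and lie in $L^2_{loc}D(A)$ a.s.), we conclude $\indFn{\Omega_0}U \equiv 0$ for all $t$, a.s., which is \eqref{eq:uniquenessCutoffSolns}. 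A minor technical point worth addressing is the justification of the It\^o formula in $V$ for the difference, which follows from the regularity in \eqref{eq:regularityCondMG} (namely $U_i \in C_t V \cap L^2_t D(A)$) by a standard approximation argument, or by applying the infinite-dimensional It\^o formula of \cite{ZabczykDaPrato1} to $|A^{1/2}U|^2$ directly.
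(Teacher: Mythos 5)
Your overall architecture is the same as the paper's: form $R=U^{(1)}-U^{(2)}$, apply It\^o to $\|R\|^2$, split the nonlinearity into $\theta(\cdot)(B(U^{(1)})-B(U^{(2)}))$ plus a cutoff-difference term, use bilinearity together with \eqref{eq:Bprop3} so that every bad coefficient has the form $\|U^{(i)}\|^2|AU^{(i)}|^2$, use the Lipschitz property of $\theta$, $F$, $\sigma$, localize by stopping times, and close with Gronwall. Two steps, however, do not work as you have written them. First, your bound for the cutoff-difference term is wrong: \eqref{eq:Bprop3} with $U=U^\sharp=U^{(2)}$ gives $|B(U^{(2)})|\le c_0\|U^{(2)}\|\,|AU^{(2)}|$, not $c_0\|U^{(2)}\|^{1/2}|AU^{(2)}|^{3/2}$. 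With your (weaker) bound, Young's inequality leaves a Gronwall coefficient proportional to $\|U^{(2)}\|\,|AU^{(2)}|^{3}$, which is \emph{not} integrable in time under the regularity \eqref{eq:regularityCondMG} and cannot be controlled by any stopping time built from $\int|AU^{(i)}|^2$. With the correct bound the coefficient becomes $\|U^{(2)}\|^2|AU^{(2)}|^2$, exactly as in the paper's estimate of $J_1$, and the aside about $\|U^{(2)}\|$ being controlled on the support of $\theta$ is then unnecessary. Relatedly, your stopping time should be chosen to bound $\int_0^t(\|U^{(1)}\|^2|AU^{(1)}|^2+\|U^{(2)}\|^2|AU^{(2)}|^2)\,ds$ directly (as in \eqref{eq:pointwiseControl}), so that the time integral of the Gronwall coefficient up to the stopping time is deterministically bounded; bounding only $\int|AU^{(i)}|^2$ leaves a random factor $\sup_t\|U^{(i)}\|^2$ in the way.

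Second, the concluding Gronwall step cannot be carried out as ``take expectations and apply the deterministic Gronwall inequality.'' After taking expectations you face $\E\int_0^{t\wedge\tau_R} g(s)\|\bar R(s)\|^2\,ds$ with $g$ random and merely $L^1$ in time; this is not bounded by $\int_0^t h(s)\,\E\|\bar R(s\wedge\tau_R)\|^2\,ds$ for any deterministic integrable $h$, and the crude bound $\int_0^{\tau_R}g\le R$ only yields $R\cdot\E\sup_{s\le t\wedge\tau_R}\|\bar R\|^2$, which cannot be absorbed. A pathwise Gronwall is equally unavailable because of the martingale increment. This is precisely why the paper sets up the estimate on arbitrary subintervals $[\tau_a,\tau_b]\subset[0,\tau^{(n)}]$, controls the stochastic integral by the Burkholder--Davis--Gundy inequality \eqref{eq:BDG} with a supremum in time (so as to reach $\E\sup_t\|\bar R\|^2$, not just $\E\|\bar R(t)\|^2$ for fixed $t$), and then invokes the stochastic Gronwall lemma of \cite{GlattHoltzZiane2}, which iterates over finitely many subintervals on which $\int g$ is small. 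Your argument needs this lemma (or an equivalent iteration); as stated, the final step fails. Everything else --- the cancellation of $U^{(1)}_*-U^{(2)}_*$ on $\Omega_0$, the use of \eqref{eq:Bprop3} for $B(\bar R,U^{(1)})$ and $B(U^{(2)},\bar R)$, and the treatment of the $F$ and $\sigma$ terms --- matches the paper.
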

\begin{Rmk}\label{rmk:pathwiseUniqueness}
  We note that, with trivial modifications to the proof that follows,
  one may establish that Pathwise solutions of \eqref{eq:AbsEq} are
  unique in the sense of Definition~\ref{def:Pathwisesolns}, (ii).
\end{Rmk}
\begin{proof}[Proof of Proposition~\ref{thm:Uniqueness}]
  Define $R = U^{(1)} - U^{(2)}$ and let $\bar{R} =
  \indFn{\Omega_0}R$.  Note that, by definition,
  $\bar{R} \in
  C([0,\infty); V) \cap L^2_{loc}([0,\infty); D(A))$, a.s.
  Due to the bilinear term $B$, when we attempt to estimate
  $\bar{R}$, stray terms arise that involve only $U^{(1)}$ or
  $U^{(2)}$.   See \eqref{eq:cuttTermsLipUse}, \eqref{eq:UniquePathEstJ1} 
  below. To remedy this situation we define the stopping times
 \begin{equation}\label{eq:pointwiseControl}
   \tau^{(n)}   := \inf_{t \geq 0} \left\{ \int_0^{t} \|U^{(1)}\|^2
   |AU^{(1)}|^2 + \|U^{(2)}\|^2 |AU^{(2)}|^2ds  \geq n \right\}.
 \end{equation}
 Clearly this is an increasing sequence. 
 Futhermore, since $U^{(1)}$, $U^{(2)}$ are global solutions,
 we may infer that $lim_{n \rightarrow \infty} \tau^{(n)} =\infty$
 from \eqref{eq:regularityCondMG}.
 Hence, the desired result will follow if we show 
 that for any $n$,  $T > 0$,
\begin{equation}\label{eq:suffConForUniqueness}
  \E \left( \sup_{[0,\tau^{(n)} \wedge T]}\|\bar{R}\|^2 \right)  =0.
\end{equation}

Subtracting the equations (c.f. \eqref{eq:modFullSystemCut}) for $U^{(2)}$ from that
for $U^{(1)}$ we arrive at the following equation for $R$:
\begin{equation}\label{eq:EvolutionOfDifferenceOfSoln}
  \begin{split}
  d R + (AR &+ \theta( \| U^{(1)} - U^{(1)}_*\| ) B(U^{(1)}) -
                        \theta( \| U^{(2)} - U^{(2)}_*\| ) B(U^{(2)}) \\
      &+ F(U^{(1)}) - F(U^{(2)}))dt
         = (\sigma(U^{(1)}) - \sigma(U^{(2)}))dW,\\
     R(0) &= U^{(1)}(0) - U^{(2)}(0).
  \end{split}
\end{equation}
It\={o}'s lemma yields the following evolution equation for $\|R\|^2$:
\begin{equation}\label{eq:RHTypeNorm}
  \begin{split}
    d \|R\|^2 + 2|AR|^2
    =& 2\langle
       \theta( \| U^{(2)} - U^{(2)}_*\| ) B(U^{(2)}) 
       - \theta( \| U^{(1)} - U^{(1)}_*\| )  B(U^{(1)}), AR \rangle\\
      &+ 2\langle F(U^{(2)}) - F(U^{(1)}), AR\rangle dt
        + \|\sigma(U^{(1)}) - \sigma(U^{(2)})\|_{L_2(\mathfrak{U}, V )}^2 dt\\
      & + 2 \langle \sigma(U^{(1)}) - \sigma(U^{(2)}), AR \rangle dW.\\
 \end{split}
\end{equation}
Fix $n$ and stopping times $\tau_a, \tau_b$, such that $0 \leq \tau_a \leq \tau_b \leq \tau^{(n)}$. Integrating in time and
taking supremums, multiplying by $\indFn{\Omega_0}$ and finally taking
an expected value we arrive at the expression
\begin{equation}\label{eq:supTforRV1}
  \begin{split}
    \E \biggl(&\sup_{t \in [\tau_a, \tau_b]} \|\bar{R}\|^2 + \int_{\tau_a}^{\tau_b} |A \bar{R}|^2ds
       \biggr)\\
    \leq& \E \|\bar{R}(\tau_a)\|^2
       + 2 \E \int_{\tau_a}^{\tau_b} |\langle 
       (\theta( \| U^{(1)} - U^{(1)}_*\| ) - 
       \theta( \| U^{(2)} -U^{(2)}_*\| )) B(U^{(1)}),  A \bar{R}\rangle| dt\\
       &\quad + 2 \E \int_{\tau_a}^{\tau_b} |\langle 
      B(U^{(1)}) - 
       B(U^{(2)}) , A \bar{R}\rangle| dt
      +2 \E \int_{\tau_a}^{\tau_b} |\langle F(U^{(1)}) - F(U^{(2)}), A \bar{R}\rangle| dt\\
      &\quad +2 \E \sup_{t \in [\tau_a, \tau_b]}  \left|
       \int_{\tau_a}^{t}\langle \sigma(U^{(1)}) - \sigma(U^{(2)}), A \bar{R} \rangle dW
       \right|
        +\E \int_{\tau_a}^{\tau_b}\indFn{\Omega_0} 
      \|\sigma(U^{(1)}) - 
          \sigma(U^{(2)})\|_{L_2(\mathfrak{U}, V)}^2ds\\
  :=& \E \|\bar{R}(\tau_a)\|^2 + J_1 +  J_2 + J_3 + J_4 + J_5.
  \end{split}
\end{equation}
For $J_1$, we use that $\theta$ is Lipschitz.  See \eqref{eq:CuttoffDef}.
Applying (\ref{eq:Bprop3}), we have
\begin{equation}\label{eq:cuttTermsLipUse}
  \begin{split}
    J_1 \leq& 
    c \E \int_{\tau_a}^{\tau_b} 
    \| (U^{(1)} - U^{(2)})  -  (U_*^{(1)} - U_*^{(2)})  \| 
   |\langle B(U^{(1)}),  A \bar{R}\rangle| dt
    \leq  c \E \int_{\tau_a}^{\tau_b}     \| \bar{R} \|
        \|U^{(1)} \| | AU^{(1)}| |A \bar{R}| dt\\   
     \leq& \frac{1}{4} \E \int_{\tau_a}^{\tau_b}   |A\bar{R} |^{2} ds
           + c
   \E \int_{\tau_a}^{\tau_b} \|U^{(1)}\|^{2} |AU^{(1)} |^{2}  \|\bar{R}\|^{2} ds. 
  \end{split}
\end{equation}
Note that since both $U_*^{(1)}$, $U_*^{(2)}$ satisfy the linear
equation \eqref{eq:linearAuxSystem} it is clear that, for every $t \geq 0$
$\indFn{\Omega_0} (U_*^{(1)}(t) - U_*^{(2)}(t)) = 0$ almost surely.
For $J_2$ the bilinearity of $B$ and \eqref{eq:Bprop3} imply:
\begin{equation}\label{eq:UniquePathEstJ1}
  \begin{split}
    J_2 = 2\E &\int_{\tau_a}^{\tau_b} |\langle 
   B(U^{(1)} - U^{(2)}, U^{(1)})
  + B(U^{(2)}, U^{(1)} - U^{(2)}) , A \bar{R}\rangle| dt\\
   \leq&
   2\E \int_{\tau_a}^{\tau_b}  |\langle 
    B(\bar{R}, U^{(1)})
  + B(U^{(2)}, \bar{R}) , A \bar{R}\rangle| dt\\
     \leq& c
  \E \int_{\tau_a}^{\tau_b}    
  (\|U^{(1)}\|^{1/2} |AU^{(1)} |^{1/2}  +  \|U^{(2)}\|^{1/2} |AU^{(2)} |^{1/2})
  \|\bar{R}\|^{1/2} |A\bar{R} |^{3/2}ds\\
     \leq& \frac{1}{4} \E \int_{\tau_a}^{\tau_b}   |A\bar{R} |^{2} ds
           + c
   \E \int_{\tau_a}^{\tau_b} (\|U^{(1)}\|^{2} |AU^{(1)} |^{2}  +  \|U^{(2)}\|^{2} |AU^{(2)} |^{2})
  \|\bar{R}\|^{2} ds.
  \end{split}
\end{equation}
The terms $J_3$ and $J_5$ are estimated directly making use of
(\ref{eq:lipConF}) to infer
\begin{equation}\label{eq:FLipUniqueEst}
  \begin{split}
    J_3 \leq& c
    \E \int_{\tau_a}^{\tau_b} \indFn{\Omega_0} 
        |F(U^{(1)}) - F(U^{(2)}) | |A \bar{R}|ds\\
        \leq&  c
        \E \int_{\tau_a}^{\tau_b} 
       \|\bar{R}\|
       |A \bar{R}|ds
        \leq
        \frac{1}{4} \E \int_{\tau_a}^{\tau_b}        |A \bar{R} |^2ds
        +c\E \int_{\tau_a}^{\tau_b} \|\bar{R}\|^2ds,
   \end{split}
\end{equation}
and making use of (\ref{eq:lipCondSig}) to deduce
\begin{equation}  \label{eq:GLipUniqueEst}
 \begin{split}
   J_5 \leq c \E \int_{\tau_a}^{\tau_b} \|\bar{R}\|^2ds.
  \end{split}
\end{equation}
Finally, $J_4$ is addressed using (\ref{eq:BDG}), with $r =1$ and then 
 (\ref{eq:lipCondSig})
\begin{equation}\label{eq:J3StochasticUniquenessThm}
  \begin{split}
  J_4 \leq& 
    c\E  \left(
  \int_{\tau_a}^{\tau_b} \indFn{\Omega_0} \langle \sigma(U^{(1)}) -
  \sigma(U^{(2)}),
   A \bar{R} \rangle^{2} ds
  \right)^{1/2}\\
\leq& c \E  \left(
  \int_{\tau_a}^{\tau_b} \indFn{\Omega_0}\|\sigma(U^{(1)}) -
  \sigma(U^{(2)})\|^2_{L_2(\mathfrak{U},V)} 
  \| \bar{R}\|^2 ds
  \right)^{1/2}\\
\leq& c
\E  \left(
 \int_{\tau_a}^{\tau_b} \|\bar{R}\|^4
 \right)^{1/2}
\leq \frac{1}{2} \E  
 \sup_{t \in [\tau_a, \tau_b]} \|\bar{R}\|^2
+c \E  \int_{\tau_a}^{\tau_b} \| \bar{R}\|^2 ds.
\end{split}
\end{equation}
Applying the estimates in \eqref{eq:cuttTermsLipUse}, \eqref{eq:UniquePathEstJ1}, \eqref{eq:FLipUniqueEst},
\eqref{eq:GLipUniqueEst}, \eqref{eq:J3StochasticUniquenessThm} to \eqref{eq:supTforRV1}
we infer
\begin{equation}\label{eq:FinalConcForStochGronUnique}
  \begin{split}
    \E \biggl(\sup_{t \in [\tau_a, \tau_b]} &\|\bar{R}\|^2 + \int_{\tau_a}^{\tau_b} |A \bar{R}|^2ds
       \biggr)\\
    \leq& c\E \|\bar{R}(\tau_a)\|^2 
     + c \E \int_{\tau_a}^{\tau_b} (\|U^{(1)}\|^{2} |AU^{(1)} |^{2}  +  \|U^{(2)}\|^{2} |AU^{(2)} |^{2} + 1)
  \|\bar{R}\|^{2} ds.
   \end{split}
\end{equation}
With this estimate we may finally apply the stochastic Gronwall lemma,
as in \cite{GlattHoltzZiane2} to conclude (\ref{eq:suffConForUniqueness}).
The proof is complete.
\end{proof}

\subsection{Compactness Revisited}
\label{sec:compRevist}

We return to the sequence $\{U^{n}\}$ of Galerkin solutions of
(\ref{eq:GalerkinCutoff}) defined relative to the given stochastic
basis $\mathcal{S}$.  We assume throughout this section that
$\E \|U_{0}\|^{q} < \infty$ for some $q \geq 8$.  Once we have 
established the existence of local pathwise solutions for all 
initial data in this class the general case, \eqref{eq:U0Cond1Pathwise}
may be established via a localization argument.  See e.g.
\cite{GlattHoltzZiane2}.

In pursuit of Proposition~\ref{thm:GyongyKry} we
consider the collection of joint distributions $\mu^{n,m}_U$ given by
$(U^{n}, U^{m})$.  For this purpose we define the extended phase space
(cf. \eqref{eq:phaseSpaceStrongConv})
\begin{equation}\label{eq:phaseSpaceStrongSolnConvExtend}
  \begin{split}
   \mathcal{X}^{J} 
      = \mathcal{X}_U \times \mathcal{X}_U \times \mathcal{X}_W,  
   &\quad 
   \mathcal{X}^{J}_U := \mathcal{X}_U \times \mathcal{X}_U, \\
   \mathcal{X}_U = L^2(0,T; V) \cap C([0,T], V'), 
   \quad \quad&
   \mathcal{X}_W = C([0,T],\mathfrak{U}_0).      
 \end{split}
\end{equation}
As above in \eqref{eq:solnMeasures}, \eqref{eq:BMMeasures} 
we let $\mu^n_U(E) = \Prb( U^n \in E)$ for $E \in \mathcal{X}_U$
and $\mu_W(E)  = \Prb( W \in E)$ for $E \in \mathcal{X}_W$.
Take
\begin{equation}\label{eq:solutionandStochLaws2}
 \begin{split}
 \mu^{n,m}_U = \mu^n_{U} \times \mu^m_{U}, \quad
 \nu^{n,m} = \mu^n_{U} \times \mu^m_{U} \times \mu_W.
\end{split}
\end{equation}
Similarly to Lemma~\ref{thm:tightness1pt0} we prove:
\begin{Lem}\label{thm:tightness2pt0}
  Suppose $\E \|U_{0}\|^{q} < \infty$ for some $q \geq 8$.
  The collection $\{\nu^{n,m}\}$ (and hence any subsequence
  $\{\nu^{n_k, m_k}\}$) is tight (and hence compact) on 
  $\mathcal{X}^{J}$.
\end{Lem}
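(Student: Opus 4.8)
The plan is to obtain this as an immediate consequence of the compactness already extracted in the proof of Lemma~\ref{thm:tightness1pt0}, combined with the elementary fact that a finite product of tight families of probability measures is tight. The key observation is that $\nu^{n,m} = \mu^n_U \times \mu^m_U \times \mu_W$ is a \emph{product} measure, so no joint compactness in the pair $(U^n,U^m)$ is required and it suffices to control each marginal separately.

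First I would note that the uniform estimates of Lemma~\ref{thm:UniformEstimateGalerkin} in the case $p=2$, namely \eqref{eq:UniformGalEst1}, \eqref{eq:UniformGalEst2}, \eqref{eq:uniformFracTmEstGalerkin} and \eqref{eq:UniformDrftyEst}, hold with a constant $K$ that is independent of the Galerkin index. Hence, exactly as in \eqref{eq:ChebBndTight1}--\eqref{eq:ChebFinal1}, for every $\epsilon > 0$ one may choose a single set $A_\epsilon = B^1_{\sqrt{3c/\epsilon}} \cap B^2_{\sqrt{3c/\epsilon}}$, compact in $\mathcal{X}_U = L^2(0,T; V) \cap C([0,T],V')$, such that $\mu^n_U(A_\epsilon) \geq 1 - \epsilon/3$ for all $n \geq 1$; that the bound $K$ does not depend on $n$ is precisely what allows the same $A_\epsilon$ to serve both $U$-marginals. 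Since $\{\mu_W\}$ is a constant sequence it is tight (Proposition~\ref{thm:ProhorovSkorhod}), so there is a compact set $\tilde A_\epsilon \subset \mathcal{X}_W = C([0,T],\mathfrak{U}_0)$ with $\mu_W(\tilde A_\epsilon) \geq 1 - \epsilon/3$.

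Next I would set $\mathcal{K}_\epsilon := A_\epsilon \times A_\epsilon \times \tilde A_\epsilon$, which is compact in $\mathcal{X}^{J}$, and estimate by a union bound
\[
  \nu^{n,m}\bigl((\mathcal{K}_\epsilon)^C\bigr)
   \leq \mu^n_U\bigl((A_\epsilon)^C\bigr) + \mu^m_U\bigl((A_\epsilon)^C\bigr) + \mu_W\bigl((\tilde A_\epsilon)^C\bigr)
   \leq \epsilon
\]
for all $n, m \geq 1$. This shows that $\{\nu^{n,m}\}$ is tight on $\mathcal{X}^{J}$, and therefore, by Prohorov's theorem (Proposition~\ref{thm:ProhorovSkorhod}), weakly compact; the same conclusion holds trivially for any subsequence $\{\nu^{n_k,m_k}\}$. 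I do not anticipate any genuine obstacle here: the substantive compactness estimates were carried out in Lemma~\ref{thm:tightness1pt0}, and the only point worth a word of justification is that the uniform-in-$n$ character of the bounds in Lemma~\ref{thm:UniformEstimateGalerkin} permits a common choice of compact set for the two solution marginals.
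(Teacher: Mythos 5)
Your proposal is correct and follows essentially the same route as the paper: both exploit the product structure of $\nu^{n,m}$ together with the uniform-in-$n$ bounds from Lemma~\ref{thm:UniformEstimateGalerkin} to select a single compact set $A_\epsilon$ serving both $U$-marginals, and take $\mathcal{K}_\epsilon = A_\epsilon \times A_\epsilon \times \tilde{A}_\epsilon$. The only (immaterial) difference is that you bound $\nu^{n,m}((\mathcal{K}_\epsilon)^C)$ by a union bound on complements, whereas the paper multiplies the marginal probabilities directly, obtaining $(1-\tfrac{\epsilon}{4})^2(1-\tfrac{\epsilon}{2}) \geq 1-\epsilon$.
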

\begin{proof}
  The proof is nearly identical to Lemma~\ref{thm:tightness1pt0}.  We
  determine the sets $B^1_R, B^2_R$ exactly as previously.  With trivial
  modifications (see (\ref{eq:ChebFinal1}) and remarks immediately
  above) we can therefore choose $A_\epsilon$, $\tilde{A}_\epsilon$
  compact in $\mathcal{X}_U$ and $\mathcal{X}_W$ respectively so that
  $\mu^n_U(A_\epsilon) \geq 1 - \frac{\epsilon}{4}$,  and
  $\mu^n_W(\tilde{A}_\epsilon) \geq 1 - \frac{\epsilon}{2}$,
  for every $n$.   Taking $\mathcal{K}_\epsilon := A_\epsilon \times
  A_\epsilon \times \tilde{A}_\epsilon$, which is compact in
  $\mathcal{X}^J$ we see that
  $\nu^{n,m}(\mathcal{K}_\epsilon) \geq 
    \left(1 -\frac{\epsilon}{4}\right)^2 \left(1 -\frac{\epsilon}{2}\right)
    \geq 1 - \epsilon$,
 which holds for every $0 < \epsilon< 1$ over all $m, n$.  The proof is complete.
\end{proof}

Suppose now that $\{\nu^{n_k, m_k}_U\}_{k \geq 0}$ is
any subsequence.  By Lemma~\ref{thm:tightness2pt0}, $\{\nu^{n_k,
  m_k}_U\}_{k \geq 0}$ is tight and hence by
\arxiv{Proposition~\ref{thm:ProhorovSkorhod} (i),}\physD{Prohorov's Theorem} we may choose as subsequence
$k'$ so that $\nu^{n_k',m_k'}$ converges weakly to an element $\nu'$.
By applying \arxiv{Proposition~\ref{thm:ProhorovSkorhod}, (ii)}\physD{Skorohod's Theorem} we next infer
the existence of a probability space $(\tilde{\Omega},
\tilde{\mathcal{F}}, \tilde{\Prb})$ upon which are defined a sequence
of random elements $(\tilde{U}^{n_k'},\tilde{\tilde{U}}^{m_k'}, \tilde{W}^{k'})$ converging almost
surely in $\mathcal{X}^J$ to an element $(\tilde{U},\tilde{\tilde{U}}, \tilde{W})$ 
in such a way that
$\tilde{\Prb}\left( (\tilde{U}^{n_k'},\tilde{\tilde{U}}^{m_k'}, \tilde{W}^{k'}) \in \cdot \right)
= \nu^{n_k', m_k'}(\cdot)$ and 
$\tilde{\Prb}\left( (\tilde{U},\tilde{\tilde{U}}, \tilde{W})  \in \cdot \right) = \nu^{'}(\cdot)$.
Let  $\tilde{Z}_{k'} = (\tilde{U}^{n_k'},
\tilde{W}^{k'})$, $\tilde{\tilde{Z}}_{k'} =
(\tilde{\tilde{U}}^{n_k'}, \tilde{W}^{k'})$, $\tilde{Z} = (\tilde{U},
\tilde{W})$ and $\tilde{Z}= (\tilde{\tilde{U}}, \tilde{W})$.
Note that in particular $\mu^{n_k,m_k}_U$ converges weakly to the measure $\mu_U$ defined by
\begin{equation}\label{eq:FinalJointPrbMeasure}
  \mu_U( \cdot ) :=  \tilde{\Prb}( (\tilde{U}, \tilde{\tilde{U}}) \in \cdot)
\end{equation}

Exactly as for Proposition~\ref{thm:SubsequentialConv},  we may establish the
conditions for Proposition~\ref{thm:PassageToLimitThm} below for both 
$\tilde{Z}_{k'}$, $\tilde{Z}$ and $\tilde{\tilde{Z}}_{k'}$, $\tilde{\tilde{Z}}$.  
As such we infer that both $\tilde{U}$ and
$\tilde{\tilde{U}}$ are global Martingale solutions of \eqref{eq:modFullSystemCut} over
the same stochastic basis $\tilde{S} = (\Omega, \mathcal{F},
\{\mathcal{F}_t \}_{t \geq 0}, \Prb, \tilde{W})$.  
Since it may be readily shown from the above convergences that 
$\tilde{U}(0) = \tilde{\tilde{U}}(0)$ a.s. we infer from Proposition~\ref{thm:Uniqueness} 
that $\tilde{U} = \tilde{\tilde{U}}$  in $\mathcal{X}_{U}$ a.s.  In other words
\begin{displaymath}
  \mu(\{ (x,y) \in \mathcal{X}^J_U \times \mathcal{X}^J_U : x = y\})
  = \tilde{\Prb}( \tilde{U} = \tilde{\tilde{U}}  \textrm{ in } \mathcal{X}_{U}) =1.
\end{displaymath}
With this conclusion, Proposition~\ref{thm:GyongyKry}, now implies
that the original sequence $U^n$ defined on the initial probability
space $(\Omega, \mathcal{F}, \Prb)$ converges to an element $U$, 
in the topology of $\mathcal{X}_{U}$.   By a final application of 
Proposition~\ref{thm:PassageToLimitThm}\footnote{Actually,
in contrast to previous cases above, the convergence is more 
straightforward since in this case we need to consider only one fixed
driving brownian motion $W$ throughout.} below we may infer that
$U$ is a global pathwise solution of \eqref{eq:modFullSystemCut}.
Hence, taking $\tau$ as in \eqref{eq:firstExistenceTime} below we 
conclude that $(U, \tau)$ is a local pathwise solution of \eqref{eq:AbsEq}.

The passage from a local to a maximal pathwise solution in the 
sense of Definition~\ref{def:Pathwisesolns}, (iii), may 
now be carried out as in \cite{GlattHoltzZiane1}.  See also \cite{Jacod1}.
This completes the proof of Theorem~\ref{thm:MainResult}.

\section{Example: The Primitive Equations of the Ocean}
\label{sec:examples}
\label{sec:prim-equat-ocean}

As discussed in the introduction the primary motivation in the
development of the abstract theory was to be able to treat 
the existence of local, pathwise solutions of the stochastic 
primitive equations of the ocean considered 
in the $\beta$-plane approximation.  We now recall this system
of equations and show how Theorem~\ref{thm:MainResult} 
applies to these equations.

The stochastic primitive equations of the ocean in their $\beta$-plane approximation
take the form
\begin{subequations}\label{eq:PE3DBasic}
  \begin{gather}
    \begin{split}
    \pd{t} \mathbf{v} 
    + (\mathbf{v} \cdot \nabla)\mathbf{v} + w \pd{z}\mathbf{v}
    + \frac{1}{\rho_0} \nabla p 
    + f \mathbf{k} \times \mathbf{v} 
    - \mu_{\mathbf{v}} \Delta \mathbf{v} 
    - \nu_{\mathbf{v}} \pd{zz} \mathbf{v}
    = F_{\mathbf{v}} + \sigma_{\mathbf{v}}(\mathbf{v},T,S) \dot{W}_1,
    \end{split}
    \label{eq:MomentumPE}\\
    \pd{z} p = - \rho g,
    \label{eq:HydroStaticPE}\\
    \nabla \cdot \mathbf{v} + \pd{z} w = 0,
    \label{eq:divFreeTypeCondPE}\\
    \pd{t} T + (\mathbf{v}\cdot \nabla) T
             + w \pd{z} T
             - \mu_{T} \Delta T
             - \nu_{T} \pd{zz} T
             = F_{T} + \sigma_{T}(\mathbf{v},T,S) \dot{W}_2,
    \label{eq:diffEqnTempPE}\\
    \pd{t} S + (\mathbf{v}\cdot \nabla) S
             + w \pd{z} S
             - \mu_{S} \Delta S
             - \nu_{S} \pd{zz} S
             = F_{S} + \sigma_{S}(\mathbf{v},T,S) \dot{W}_3,
    \label{eq:diffEqnSaltPE}\\
    \rho = \rho_0 ( 1 - \beta_T( T - T_r) + \beta_S(S- S_r)).
    \label{eq:linearDensityDependence}
  \end{gather}
\end{subequations}
Here, $\mathbf{v},T,S, p, \rho$ represent the horizontal
velocity,temperature, salt concentration, pressure and density of the 
fluid under consideration; $\mu_{\mathbf{v}}$, $\nu_{\mathbf{v}}$, $\mu_{T}$, 
$\nu_{T}$, $\mu_{S}$, $\nu_{S}$ are (possibly anisotropic) coefficients of the 
eddy viscosity and of the heat and salt diffusivity respectively; $f$ is the 
Coriolis parameter appearing
in the antisymmetric term in \eqref{eq:PE3DBasic} and accounts for the
earth's rotation in the momentum budget. The stochastic terms are
driven by white noise processes $\dot{W}_{j}$ and are understood in
the It\={o} sense.   The equations as given above model oceanic flows, however
equations of a quite similar structure may be given that describe the
atmosphere and the coupled oceanic atmospheric system, or the same
equations on the sphere.

Of course \eqref{eq:PE3DBasic} is supplemented
with appropriate boundary conditions which, among other considerations
must account for the coupling at the oceans surface with the
atmosphere.  The evolution occurs over a cylindrical domain
$\mathcal{M} =
\mathcal{M}_0 \times (-h, 0)$,  where $\mathcal{M}_0$ is an open
bounded subset of $\mathbb{R}^2$ with smooth boundary $\partial
\mathcal{M}_0$.  We denote by $\mathbf{n}_H$ the outward unit normal to
$\partial \mathcal{M}_0$  The boundary $\partial \mathcal{M}$ is partitioned
into the top $\Gamma_i = \mathcal{M}_0 \times \{0\}$, bottom
$\Gamma_b= \mathcal{M}_0 \times \{-h\}$  and sides $\Gamma_l = \partial
\mathcal{M}_0  \times (-h, 0)$. We prescribe (see \cite{LionsTemamWang2},
\cite{PetcuTemamZiane})
\begin{equation}\label{eq:3dPEPhysicalBoundaryCondTop}
  \begin{split}
    \nu_{\mathbf{v}}\pd{z} \mathbf{v} + \alpha_{\mathbf{v}} \mathbf{v}= 0, \quad w = 0 \quad
    \nu_{T}\pd{z} T + \alpha_{T} T = 0,\quad
    \pd{z} S = 0, \quad \textrm{ on } \Gamma_i,
  \end{split}
\end{equation}
\begin{equation}\label{eq:3dPEPhysicalBoundaryCondBottom}
  \begin{split}
  \mathbf{v} = 0, \quad w = 0, \quad
  \pd{z} T = 0, \quad \pd{z} S = 0, \quad \textrm{ on } \Gamma_b,    
  \end{split}
\end{equation}
\begin{equation}\label{eq:3dPEPhysicalBoundaryCondSide}
  \mathbf{v} = 0, \quad \pd{\mathbf{n}_H}T = 0, \quad \pd{\mathbf{n}_H}S = 0
  \quad \textrm{ on } \Gamma_l.
\end{equation}
The equations and boundary conditions \eqref{eq:PE3DBasic},
\eqref{eq:3dPEPhysicalBoundaryCondTop},
\eqref{eq:3dPEPhysicalBoundaryCondBottom},
\eqref{eq:3dPEPhysicalBoundaryCondSide} are supplemented 
by initial conditions for $\mathbf{v}$, $T$ 
and $S$, that is
\begin{equation}\label{eq:basicInitialCond}
   \mathbf{v} = \mathbf{v}_0, \quad
   T = T_0, \quad
   S = S_0, \quad
   \textrm{ at } t = 0.
\end{equation}
With the boundary conditions, 
\eqref{eq:3dPEPhysicalBoundaryCondTop}, 
\eqref{eq:3dPEPhysicalBoundaryCondBottom},
\eqref{eq:3dPEPhysicalBoundaryCondSide},
we may reformulate \eqref{eq:PE3DBasic} according to
\begin{subequations}\label{eq:PE3DInt}
  \begin{gather}
    \begin{split}
    \pd{t} \mathbf{v} 
    &+ (\mathbf{v} \cdot \nabla)\mathbf{v} + w(\mathbf{v}) \pd{z}\mathbf{v}
    + \frac{1}{\rho_0} \nabla p_s 
    - g \int_z^0 \left( \beta_T  \nabla T  + \beta_S  \nabla S  \right) d \bar{z}\\
    &+ f \mathbf{k} \times \mathbf{v} 
    - \mu_{\mathbf{v}} \Delta \mathbf{v} 
    - \nu_{\mathbf{v}} \pd{zz} \mathbf{v}
    = F_{\mathbf{v}} + \sigma_{\mathbf{v}}(\mathbf{v},T,S) \dot{W}_1,\\
    \end{split}
    \label{eq:MomentumPEint}\\
    w(\mathbf{v}) = \int^0_z \nabla \cdot \mathbf{v} d\bar{z},  \quad
    \int_{-h}^0 \nabla \cdot \mathbf{v} d\bar{z} = 0, 
    \label{eq:divFreeTypeCondPEInt}\\
    \pd{t} T + (\mathbf{v}\cdot \nabla) T
             + w(\mathbf{v}) \pd{z} T
             - \mu_{T} \Delta T
             - \nu_{T} \pd{zz} T
             = F_{T} + \sigma_{T}(\mathbf{v},T,S) \dot{W}_2,
    \label{eq:diffEqnTempPEInt}\\
    \pd{t} S + (\mathbf{v}\cdot \nabla) S
             + w(\mathbf{v}) \pd{z} S
             - \mu_{S} \Delta S
             - \nu_{S} \pd{zz} S
             = F_{S} + \sigma_{S}(\mathbf{v},T,S) \dot{W}_3,
    \label{eq:diffEqnSaltPEInt}
 \end{gather}
\end{subequations}
which is the basis for the functional framework that we next recall.
Our presentation and notations closely follow the recent survey
\cite{PetcuTemamZiane} which covers the deterministic setting.

We will denote by $U$ the triple of prognostic variables, $U = (\mathbf{v}, T,S)$
(comprising four scalar variables), and we set
\begin{displaymath}
  \begin{split}
    H = \biggl\{ (\mathbf{v}, T,S) \in L^{2}(\mathcal{M})^{4} :
	 \nabla \cdot \int_{-h}^{0} \mathbf{v} dz = 0 \textrm{ over } \mathcal{M}_{0},
	 \mathbf{n}_H \cdot  \int_{-h}^{0} \mathbf{v} dz = 0 \textrm{ over } \partial \mathcal{M}_{0}, 
	 \int_{\mathcal{M}} S d\mathcal{M} = 0 \biggr\},
   \end{split}
\end{displaymath}
which we equip with the classical $L^{2}$ inner product\footnote{One sometimes also finds the more general definition
  $(U, U^{\sharp}) := \int_{\mathcal{M}} (\mathbf{v}\cdot
  \mathbf{v}^\sharp d + \kappa_{T}  T T^\sharp   + \kappa_{S} S S^{\sharp} )d
  \mathcal{M}$ with $\kappa_{T}, \kappa_{S} > 0$ fixed constants. These parameters $\kappa_{T}, \kappa_{S}$
  are useful for the coherence of physical dimensions and for (mathematical)
  coercivity. Since this is not needed here we take $\kappa_{T} = \kappa_{S} =1$.
  Similar remarks also apply to the space $V$.}.
Define $P_{H}$ to be the Leray type projection operator from $L^{2}(\mathcal{M})^{4}$ onto $H$.   For $H^{1}(\mathcal{M})^{4}$
we consider the subspace:
\begin{displaymath}
  \begin{split}
    V = \biggl\{ (\mathbf{v}, T,S) \in H^{1}(\mathcal{M})^{4} :
	 \nabla \cdot \int_{-h}^{0} \mathbf{v} dz = 0 \textrm{ over } \mathcal{M}_{0},
	 \mathbf{v} = 0  \textrm{ on } \Gamma_{l} \cup \Gamma_{b},
	 \int_{\mathcal{M}} S d\mathcal{M} = 0 \biggr\}.
  \end{split}
\end{displaymath}
We equip $V$ with the inner product 
\begin{displaymath}
  \begin{split}
    ((U, U^{\sharp})) :=& ((\mathbf{v},\mathbf{v}^{\sharp}))_{1} 
          +((T,T^{\sharp}))_{2} 
          +((S,S^{\sharp}))_{3},\\
    ((\mathbf{v},\mathbf{v}))_{1} :=&
    \int_{\mathcal{M}} \left( \mu_{\mathbf{v}}
     \nabla\mathbf{v} \cdot \nabla\mathbf{v}^{\sharp}+ 
     \nu_{\mathbf{v}}
     \pd{z}\mathbf{v} \cdot \pd{z}\mathbf{v}^{\sharp}
     \right) d \mathcal{M}
     +  \alpha_{\mathbf{v}} \int_{\Gamma_i}
     \mathbf{v}\mathbf{v}^{\sharp} d \Gamma_i,\\
     ((T,T^{\sharp}))_{2} :=&
   \int_{\mathcal{M}} \left(\mu_{T}
    \nabla T \cdot \nabla T^{\sharp}+ \nu_{T}
    \pd{z} T \cdot \pd{z} T^{\sharp}
    \right) d \mathcal{M}
    +  \alpha_{T} \int_{\Gamma_i}
     T T d \Gamma_i,\\
     ((S,S^{\sharp}))_{3} :=&
   \int_{\mathcal{M}} \left(\mu_{S}
    \nabla S \cdot \nabla S^{\sharp}+ \nu_{S}
    \pd{z} S \cdot \pd{z} S^{\sharp}
    \right) d \mathcal{M}.
\end{split}
\end{displaymath}
Note that under these definitions a Poincar\'{e} type 
inequality  $|U|\leq C \|U\|$ holds for all $U \in V$.  
We take $V_{(2)}$ to be the closure of $V$ in the $H^2(\mathcal{M})^4$ norm
and
equip this space with the classical $H^2(\mathcal{M})$ norm and inner
product.

The main linear portion of the equation is defined by\footnote{In \cite{PetcuTemamZiane}
a slightly different non-selfadjoint operator also called $A$ is given which corresponds to
$A + A_{p}$ (cf. \eqref{eq:linLowerOrderDef}) in the present  manuscript.}
\begin{displaymath}
  AU = P_H \left(
  \begin{split}
    -\mu_{\mathbf{v}} \Delta \mathbf{v} - \nu_{\mathbf{v}} \pd{zz} \mathbf{v} \\
    -\mu_{T} \Delta T - \nu_{T} \pd{zz} T \\
    -\mu_{S} \Delta S - \nu_{S} \pd{zz} S \\
 \end{split}
  \right), \quad U = (\mathbf{v}, T, S) \in D(A).
\end{displaymath}
where we take:
\begin{displaymath}
  \begin{split}
    D(A) = \{ U = (\mathbf{v}, T) \in V_{(2)}: \;&  
       \nu_{\mathbf{v}} \pd{z} \mathbf{v} +\alpha_{\mathbf{v}} \mathbf{v} = 0,
       \nu_{T} \pd{z} T + \alpha_T T = 0, 
       \pd{z} S = 0 \textrm{ on } \Gamma_i,\\
    &\pd{\mathbf{n}_H} T =  \pd{\mathbf{n}_H} S = 0 \textrm{ on } \Gamma_l,  
       \pd{z} T = \pd{z} S = 0 \textrm{ on } \Gamma_b  
       \}.
  \end{split}
\end{displaymath}
We observe that $A$ satisfies the conditions given in
Section~\ref{sec:absSpOps}.  Note also that due to \cite{Ziane1} 
(see also \cite{PetcuTemamZiane}) $|AU| \cong  |U|_{H^2}$.

We next turn to the quadratically nonlinear terms appearing in \eqref{eq:PE3DInt}.
Noting that there is no momentum equation for $w$ in
\eqref{eq:PE3DInt} and in accordance with \eqref{eq:divFreeTypeCondPEInt}
we \emph{define} the diagnostic function:
\begin{displaymath}
  w(U) = w(\mathbf{v}) = \int^0_z \nabla \cdot \mathbf{v} d \bar{z}, \quad
  U = (\mathbf{v}, T, S) \in V.
\end{displaymath}
Take, for $U, U^{\sharp} \in D(A)$:
\begin{equation}\label{eq:NLTerm1}
  B_1(U,U^\sharp) :=  P_H \left( 
  \begin{split}
    (\mathbf{v} \cdot \nabla)\mathbf{v}^\sharp\\
   (\mathbf{v} \cdot \nabla) T^\sharp\\
   (\mathbf{v} \cdot \nabla) S^\sharp\\
  \end{split}
  \right), \quad
  B_2(U,U^\sharp) :=  P_H \left( 
  \begin{split}
    w(\mathbf{v}) \pd{z} \mathbf{v}^\sharp\\
    w(\mathbf{v}) \pd{z} T^\sharp \\
    w(\mathbf{v}) \pd{z} S^\sharp \\
  \end{split}
  \right)
 \end{equation}
and let $B(U, U^\sharp) := B(U, U^\sharp) + B(U, U^\sharp)$.   
As in \cite{PetcuTemamZiane} one may show that $B$
is well defined as an element in $H$ for any $U, U^{\sharp} \in D(A)$.
Furthermore $B$ satisfies the conditions
\eqref{eq:Bprop1}, \eqref{eq:Bprop2},  \eqref{eq:Bprop3} relative to
the definitions of $A$, $D(A)$, $V$ and $H$ given here. For the second
component of the pressure in \eqref{eq:MomentumPEint} we take
\begin{equation}\label{eq:linLowerOrderDef}
  A_p U =   P_H \left(
  \begin{array}{c}
    - g \int_z^0 \left( \beta_T  \nabla T  + \beta_S  \nabla S  \right) d \bar{z} \\
    0\\
    0
  \end{array}
  \right), \quad U \in V.
\end{equation}
and capture the Coriolis forcing in
\begin{equation}
  \label{eq:CorTerm}
  E U = P_H \left(
  \begin{array}{c}
    f \mathbf{k} \times v \\
    0\\
    0
  \end{array}
  \right), \quad U \in H.
\end{equation}
Finally we set
\begin{equation}\label{eq:DetForcingTerm}
    F_{U} = P_H \left(
  \begin{array}{c}
    F_{\mathbf{v}}\\
    F_{T}\\
    F_{S}\\
  \end{array}
  \right).
\end{equation}
We may therefore define
\begin{equation}\label{eq:lowerorderLin}
  F(U) = A_p U + E U + F_{U}
\end{equation}
and observe that $F: V \rightarrow H$ and satisfies 
the requirement \eqref{eq:lipConF}.  Finally we define
\begin{equation}\label{eq:nonLinearNoisePE}
    \sigma((\mathbf{v}, T,S)) = \sigma(U) = P_{H}
    \left(
      \begin{array}{c}
    \sigma_{\mathbf{v}}(U)\\
    \sigma_{T}(U)\\
    \sigma_{S}(U)\\
  \end{array}
      \right),  \quad U \in H,
\end{equation}
and assume either \eqref{eq:BndCondSig}  or
\eqref{eq:lipCondSig} for the consideration of
Martingale or Pathwise solutions receptively.  

With the above definitions in place we may write \eqref{eq:PE3DBasic}
supplemented by the boundary conditions 
\eqref{eq:3dPEPhysicalBoundaryCondTop},
\eqref{eq:3dPEPhysicalBoundaryCondBottom},
\eqref{eq:3dPEPhysicalBoundaryCondSide}
in the abstract form \eqref{eq:AbsEq} and conclude via 
Theorem~\ref{thm:MainResult}:
\begin{Thm}\label{thm:MainResPE}
    Assume that $F_{\mathbf{v}}$, $F_{T}$ and
    $F_{S}$ are in $L^{2}_{loc}([0,\infty), L^{2}(\mathcal{M}))$
    and suppose that $\sigma(\cdot)$ associated to 
    $\sigma_{\mathbf{v}}(\cdot)$, $\sigma_{T}(\cdot)$,
    $\sigma_{S}(\cdot)$ via  \eqref{eq:nonLinearNoisePE}
    satisfies \eqref{eq:BndCondSig}.  Finally 
    suppose that $(\mathbf{v}_{0}, T_{0}, S_{0})$ takes values 
    in $V$ and that
    $\mu_{0}(\cdot) = \mathbb{P}( (\mathbf{v}, T, S) \in \cdot )$ 
    satisfies the moment condition \eqref{eq:U0CondMartingale}
    with $q \geq 8$.  Then:
    \begin{itemize}
    	\item[(i)] There exists a local martingale solution
	of \eqref{eq:PE3DBasic}, the primitive equations of
	the ocean.
	\item[(ii)] If we additionally assume \eqref{eq:lipCondSig}
	for $\sigma$ and allow of the relaxation of 
	\eqref{eq:U0CondMartingale} to cover any $q \geq 2$ then
	there exists a unique maximal, pathwise solution of 
	\eqref{eq:PE3DBasic}.
    \end{itemize}
\end{Thm}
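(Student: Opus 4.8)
The statement is a corollary of the abstract Theorem~\ref{thm:MainResult}: all that is required is to check that the concrete objects $H$, $V$, $D(A)$, $B = B_1 + B_2$, $F$ and $\sigma$ assembled in this section satisfy, one by one, the structural hypotheses laid out in Section~\ref{sec:math-fram}, and then to invoke Theorem~\ref{thm:MainResult} directly. Thus the plan is a verification, carried out term by term. For the spaces and the principal linear operator: since $\mathcal{M}$ is bounded with smooth boundary, Rellich--Kondrachov gives $H^1(\mathcal{M}) \subset\subset L^2(\mathcal{M})$, hence the compact dense embedding $V \subset\subset H$; the Poincar\'e inequality $|U| \le C\|U\|$ noted above makes $\|\cdot\|$ a genuine norm, so the form $((\cdot,\cdot))$ is coercive on $V$, and Lax--Milgram yields that the induced operator is an isomorphism $V \to V'$, which gives \eqref{eq:vdualRelA}. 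To identify $D(A)$ and obtain bijectivity $D(A) \to H$ together with symmetry and $(AU, U^\sharp) = ((U, U^\sharp))$ I would invoke the $H^2$-regularity $|AU| \cong |U|_{H^2}$ of \cite{Ziane1} (see also \cite{PetcuTemamZiane}); the eigenbasis $\{\Phi_k\}$, the fractional powers, and the compact embeddings $D(A^\beta) \subset\subset D(A^\alpha)$ of Section~\ref{sec:princ-line-oper} then come for free.

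Next I would turn to the quadratic term $B = B_1 + B_2$ of \eqref{eq:NLTerm1}. That $B$ is bilinear and continuous from $V \times D(A)$ to $V'$ and from $D(A) \times D(A)$ to $H$ is established in \cite{PetcuTemamZiane}. The cancellation \eqref{eq:Bprop1} I would verify by integration by parts, using $w(\mathbf{v}) = \int_z^0 \nabla\cdot\mathbf{v}\, d\bar z$, the constraint $\int_{-h}^0 \nabla\cdot\mathbf{v}\, d\bar z = 0$, and the boundary conditions $\mathbf{v} = 0$ on $\Gamma_l \cup \Gamma_b$ and $w = 0$ on $\Gamma_i \cup \Gamma_b$ encoded in $V$ and $D(A)$, which force all boundary contributions to vanish. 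The quantitative bounds \eqref{eq:Bprop2} and \eqref{eq:Bprop3} are precisely the anisotropic estimates underlying the global well-posedness of the deterministic 3D primitive equations (\cite{CaoTiti,Kobelkov,ZianeKukavica,PetcuTemamZiane}); I would quote them in the $V$--$D(A)$--$H$ triple form required here, with $|AU| \cong |U|_{H^2}$ serving to translate the $H^2$-language of those references into the $D(A)$-language of Section~\ref{sec:princ-nonl-term}.

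For $F(U) = A_p U + EU + F_U$ I would observe that the pressure term $A_p$ of \eqref{eq:linLowerOrderDef} is bounded linear from $V$ to $H$ (a $z$-primitive of an $L^2$ horizontal gradient stays in $L^2$, followed by $P_H$), that the Coriolis operator $E$ of \eqref{eq:CorTerm} is bounded linear from $H$ to $H$, and that $F_U = P_H(F_{\mathbf{v}}, F_T, F_S)$ is the affine data term supplying the ``$1 +$'' in the bounds; since bounded linear maps are Lipschitz, $F$ satisfies \eqref{eq:lipConF}, hence a fortiori \eqref{eq:sublinerConF}. The conditions \eqref{eq:BndCondSig} and \eqref{eq:lipCondSig} on $\sigma$ of \eqref{eq:nonLinearNoisePE} are assumed outright in the hypotheses. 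With all the requirements of Section~\ref{sec:math-fram} in force, part (i) then follows from Theorem~\ref{thm:MainResult}(i) with $q \ge 8$, and part (ii) from Theorem~\ref{thm:MainResult}(ii), whose pathwise conclusion requires only $U_0 \in L^2(\Omega; V)$, that is $q \ge 2$, by virtue of the localization argument recalled at the beginning of Section~\ref{sec:compRevist}.

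The one step carrying genuine content is the verification of \eqref{eq:Bprop3}: this is exactly the estimate that makes the 3D primitive equations globally well-posed in the deterministic theory, and the only real work here is to keep careful track of the boundary conditions built into $D(A)$ so that the relevant integrations by parts are licit and so that $|AU| \cong |U|_{H^2}$ genuinely holds on that domain. Everything else is the routine transcription of the known deterministic functional framework into the abstract template of Section~\ref{sec:math-fram}.
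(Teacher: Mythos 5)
Your proposal is correct and follows essentially the same route as the paper: Section~\ref{sec:examples} is precisely a term-by-term verification that $H$, $V$, $A$, $B=B_1+B_2$, $F=A_pU+EU+F_U$ and $\sigma$ satisfy the hypotheses of Section~\ref{sec:math-fram}, with the $H^2$-regularity $|AU|\cong |U|_{H^2}$ taken from \cite{Ziane1} and the properties \eqref{eq:Bprop1}--\eqref{eq:Bprop3} of $B$ imported from \cite{PetcuTemamZiane}, followed by a direct appeal to Theorem~\ref{thm:MainResult} (with the relaxation to $q\geq 2$ in part (ii) handled by the localization argument noted at the start of Section~\ref{sec:compRevist}). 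You supply somewhat more detail on the individual verifications than the paper does, but the structure of the argument is the same.
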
 

\begin{Rmk}
As noted in the introduction the Stochastic Navier-Stokes equations
have been extensively studied.  Initially these equations were considered
with an additive noise.  
See \cite{BensoussanTemam} 
and later,  \cite{ZabczykDaPrato2, MenaldiSritharan}.  
In this case, a classical transformation,
allows one to treat $\omega \in \Omega$ as a parameter in the problem.  
For nonlinear multiplicative noise, the Navier-Stokes equations were 
initially studied in the context of Martingale
solutions.  See,  for example, \cite{Viot1,Cruzeiro1,
CapinskiGatarek, Flandoli1, MikuleviciusRozovskii4}.  
These works typically considered, from the PDE point of view, weak 
solutions evolving in time only in $L^2_x$.
More recently, pathwise solutions (both local and global in time) for a multiplicative noise have
been investigated.  See \cite{Breckner, BensoussanFrehse, BrzezniakPeszat,
MikuleviciusRozovskii2}.  None of these works addressed
the existence and uniqueness of a local pathwise solution in $3D$ evolving in $H^{1}_x$. 
This case, whose deterministic analogue would be considered classical,
was established only recently in \cite{GlattHoltzZiane2}.
As in the present work a key difficulty involves finding suitable compactness 
methods to pass to the limit.  In \cite{GlattHoltzZiane2}  the necessary compactness
is established by directly showing that the sequence of Galerkin solutions
are Cauchy.   For an application of this approach
to the 2D Primitive equations see \cite{GlattHoltzTemam1, GlattHoltzTemam2}.
In any case the work here provides an alternative proof of the results in
\cite{GlattHoltzZiane2}.  The reader may readily check that the abstract framework 
developed in Section~\ref{sec:math-fram} applies to the Navier-Stokes equations
on a bounded domain with Dirichlet boundary conditions and with $H$ and $V$
corresponding, approximately speaking, to $L^{2}_x$ and $H^{1}_x$.
\end{Rmk}

\section{The Passage to the Limit}
\label{sec:passToLim}

In this section we provide the details of the passage to the limit,
which is used in the proof of the existence of both
martingale solutions and pathwise solutions.  See
Proposition~\ref{thm:SubsequentialConv} and 
Section~\ref{sec:compRevist} above.

\begin{Prop}\label{thm:PassageToLimitThm}
  Let $Z_k = (\tilde{U}^{m_k},  \tilde{W}^{m_k})$ be a sequence of $\mathcal{X}$
  valued random elements mapping from a probability space  $(\tilde{\Omega}, \tilde{\mathcal{F}},
  \tilde{\Prb})$.  We assume that
  \begin{itemize}
  \item[(i)] $Z_k$ converges almost surely to an element $Z$ in the
    topology of $\mathcal{X}$, i.e.
    \begin{subequations}
      \begin{gather}
        \tilde{U}^{m_k} \rightarrow \tilde{U}  \quad \textrm{ in }
        L^2([0,T], V) \cap C([0,T]; V'),
        \label{eq:meaningoftopX}\\
        \tilde{W}^{m_k} \rightarrow \tilde{W} \quad \textrm{ in }
        C([0,T]; \mathfrak{U}_0).
        \label{eq:meaningoftopW}
      \end{gather}
    \end{subequations}
  \item[(ii)] Each $\tilde{W}^{m_k}$ is a cylindrical Wiener process
    relative to a filtration $\mathcal{F}_t^{m_k}$ that contains 
    $\sigma((\tilde{W}^{m_k}(s), \tilde{U}^{m_k}(s));$ $s \leq t)$.
 \item[(iii)] Each pair $(\tilde{U}^{n_k}, \tilde{W}^{n_k})$ satisfies
    \begin{equation}\label{eq:GalerkinCutoffGivenbasis}
      \begin{split}
        d \tilde{U}^{n_k} + [A \tilde{U}^{n_k} + \theta( \| \tilde{U}^{n_k} - \tilde{U}_*^{n_k} \| ) 
           &B^{n_k}(\tilde{U}^{n_k}) + F^{n_k}(\tilde{U}^{n_k}) ]dt
           =\sigma^{n_k}(\tilde{U}^{n_k}) d\tilde{W}^{n_k},\\
           \tilde{U}^{n_k}(0) &= P_{n_k} \tilde{U}_0 := \tilde{U}^{n_k}_0
      \end{split}
    \end{equation}
    where we define $ \tilde{U}_*^{n_k}$ by
    \begin{equation}\label{eq:AuxLinearmodGivenBasis}
      \frac{d}{dt} \tilde{U}_*^{n_k} + A \tilde{U}_*^{n_k} = 0 
      \quad \tilde{U}_*^{n_k}(0) = \tilde{U}^{n_k}_0,
    \end{equation}
    and assume, for some $p >4$, that
    \begin{equation}\label{eq:dataCondForConv}
        \E \|\tilde{U}_{0}\|^{p} < \infty.
    \end{equation}
 \end{itemize}
 Let $\tilde{\mathcal{S}} = (\tilde{\Omega}, \tilde{\mathcal{F}},\{\tilde{\mathcal{F}}_t\}_{t\geq 0},
  \tilde{\Prb}, \tilde{W})$, defining $\tilde{\mathcal{F}}_t$ as the
 completion of  $\sigma(\tilde{W}(s), \tilde{U}(s); s \leq
    t)$.  Then $(\tilde{\mathcal{S}},\tilde{U})$ is a global martingale solution of \eqref{eq:modFullSystemCut}.
    Moreover if we define the stopping time:
    \begin{equation}\label{eq:firstExistenceTime}
      \tau := \inf_{t \geq 0} \left\{ \|\tilde{U} - \tilde{U}_* \| \geq \kappa \right\},
    \end{equation}
    where $\kappa$ is constant appearing in the definition of $\theta$, \eqref{eq:CuttoffDef},
    then $(\tilde{\mathcal{S}}, \tilde{U}, \tilde{\tau})$ is a local martingale solution of (\ref{eq:AbsEq}).
\end{Prop}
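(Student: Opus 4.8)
The plan is to transfer the uniform bounds of Lemma~\ref{thm:UniformEstimateGalerkin} to the new variables, to upgrade the almost sure convergence produced by the Skorohod construction to convergences strong enough to identify each term, and then to pass to the limit in a weak form of \eqref{eq:GalerkinCutoffGivenbasis} tested against the basis $\{\Phi_j\}$. Since each pair $(\tilde{U}^{m_k},\tilde{W}^{m_k})$ has, by construction, the law of a Galerkin pair, the estimates \eqref{eq:UniformGalEst1}, \eqref{eq:UniformGalEst2}, \eqref{eq:uniformFracTmEstGalerkin}, \eqref{eq:UniformDrftyEst} hold verbatim for $\tilde{U}^{m_k}$ with a constant independent of $k$ (using \eqref{eq:dataCondForConv}). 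Combined with $\tilde{U}^{m_k}\to\tilde{U}$ a.s.\ in $L^2([0,T];V)$ and Vitali's theorem, this yields $\tilde{U}^{m_k}\to\tilde{U}$ in $L^q(\tilde{\Omega};L^2([0,T];V))$ for every $q<p$ and, by weak lower semicontinuity, $\tilde{U}\in L^2(\tilde{\Omega};L^2([0,T];D(A)))\cap L^p(\tilde{\Omega};L^\infty([0,T];V))$ with $\tilde{U}^{m_k}\rightharpoonup\tilde{U}$ weakly in $L^2(\tilde{\Omega}\times[0,T];D(A))$. Moreover $\tilde{U}^{m_k}_*(t)=e^{-tA}P_{m_k}\tilde{U}_0$, and since $P_{m_k}\tilde{U}_0\to\tilde{U}_0$ in $V$ (a.s.\ and in $L^p(\tilde{\Omega};V)$) one gets $\tilde{U}^{m_k}_*\to\tilde{U}_*$ in $C([0,T];V)\cap L^2([0,T];D(A))$, where $\tilde{U}_*$ solves \eqref{eq:AuxLinearmodGivenBasis} with datum $\tilde{U}(0)$. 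The continuity assertion of \eqref{eq:regularityCondMG} for $\tilde{U}$ then follows from the decomposition $\tilde{U}=\bigl(\tilde{U}-\int_0^\cdot\sigma(\tilde{U})\,d\tilde{W}\bigr)+\int_0^\cdot\sigma(\tilde{U})\,d\tilde{W}$: the first summand lies in $L^2_tD(A)$ with time derivative in $L^2_tH$, hence in $C([0,T];V)$ by classical parabolic regularity, while the second lies in $C([0,T];D(A))\subset C([0,T];V)$ a.s.\ by \eqref{eq:BDG} with $X=D(A)$ together with \eqref{eq:BndCondSig}.

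To identify the drift I would test \eqref{eq:GalerkinCutoffGivenbasis} against a fixed $\Phi_j$; for $m_k\ge j$ the projections disappear, e.g.\ $\langle B^{m_k}(\tilde{U}^{m_k}),\Phi_j\rangle=\langle B(\tilde{U}^{m_k}),\Phi_j\rangle$. Writing $B(\tilde{U}^{m_k})-B(\tilde{U})=B(\tilde{U}^{m_k}-\tilde{U},\tilde{U}^{m_k})+B(\tilde{U},\tilde{U}^{m_k}-\tilde{U})$, bounding the first piece by \eqref{eq:Bprop2} and, for the second, first using the cancellation identity $\langle B(U,a),b\rangle=-\langle B(U,b),a\rangle$ (a consequence of \eqref{eq:Bprop1}) and then \eqref{eq:Bprop2}, one obtains
\[
\int_0^T\bigl|\langle B(\tilde{U}^{m_k})-B(\tilde{U}),\Phi_j\rangle\bigr|\,ds
\le c\,|A\Phi_j|\bigl(\|\tilde{U}^{m_k}\|_{L^2_tD(A)}+\|\tilde{U}\|_{L^2_tV}\bigr)\,\|\tilde{U}^{m_k}-\tilde{U}\|_{L^2_tV}\longrightarrow 0
\]
a.s. Since $\|\tilde{U}^{m_k}-\tilde{U}^{m_k}_*\|\to\|\tilde{U}-\tilde{U}_*\|$ for a.e.\ $t$ (a.s.) and $0\le\theta\le1$, dominated convergence then gives convergence of $\int_0^t\theta(\|\tilde{U}^{m_k}-\tilde{U}^{m_k}_*\|)\langle B(\tilde{U}^{m_k}),\Phi_j\rangle\,ds$ to the corresponding limit expression, for each $t$, a.s. The linear term is immediate because $\langle A\tilde{U}^{m_k},\Phi_j\rangle=((\tilde{U}^{m_k},\Phi_j))$, and for $F$ the continuity of $F:V\to H$, the growth bound \eqref{eq:sublinerConF}, a.e.\ convergence and Vitali give $F^{m_k}(\tilde{U}^{m_k})\to F(\tilde{U})$ in $L^2([0,T];H)$ a.s.

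For the stochastic term Lemma~\ref{thm:ConvThm} is the tool. From $\tilde{U}^{m_k}\to\tilde{U}$ a.s.\ in $L^2([0,T];H)$, the continuity of $\sigma:H\to L_2(\mathfrak{U},H)$, the bound \eqref{eq:BndCondSig}, Vitali, and $P_{m_k}\to I$ strongly, I get $\sigma^{m_k}(\tilde{U}^{m_k})\to\sigma(\tilde{U})$ in probability in $L^2([0,T];L_2(\mathfrak{U},H))$; with $\tilde{W}^{m_k}\to\tilde{W}$ in probability in $C([0,T];\mathfrak{U}_0)$ this is exactly the hypothesis of Lemma~\ref{thm:ConvThm}, once the limit integral is known to be meaningful. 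That $\tilde{W}$ is a cylindrical Wiener process relative to $\tilde{\mathcal{F}}_t$ — so that the adapted, hence predictable, process $\tilde{U}$ may legitimately be integrated against it — follows by the Lévy martingale characterization, passing to the limit in the defining identities valid for each $\tilde{W}^{m_k}$ with respect to its smaller natural filtration, exactly as in \cite{Bensoussan1} and in the proof of Proposition~\ref{thm:SubsequentialConv}. Lemma~\ref{thm:ConvThm} then gives $\int_0^\cdot\sigma^{m_k}(\tilde{U}^{m_k})\,d\tilde{W}^{m_k}\to\int_0^\cdot\sigma(\tilde{U})\,d\tilde{W}$ in probability in $L^2([0,T];H)$, hence, along a further subsequence we relabel, a.s.\ in $L^2([0,T];H)$, so the scalar integrals against $\Phi_j$ converge for a.e.\ $t$, a.s.

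Passing $k\to\infty$ in the $\Phi_j$-tested integrated form of \eqref{eq:GalerkinCutoffGivenbasis} then yields, for a.e.\ $t$ and every $j$, the identity $(\tilde{U}(t),\Phi_j)+\int_0^t\langle A\tilde{U}+\theta(\|\tilde{U}-\tilde{U}_*\|)B(\tilde{U})+F(\tilde{U}),\Phi_j\rangle\,ds=(\tilde{U}(0),\Phi_j)+\langle\int_0^t\sigma(\tilde{U})\,d\tilde{W},\Phi_j\rangle$; since $\{\Phi_j\}$ is total in $H$ and, by the regularity above, every term is continuous in $t$ with values in $H$ (or $V'$), the identity holds in $H$ for all $t$, a.s. Thus $(\tilde{\mathcal{S}},\tilde{U})$ satisfies \eqref{eq:spdeAbstracMG} for \eqref{eq:modFullSystemCut} with the regularity \eqref{eq:regularityCondMG}, on $[0,T]$ for arbitrary $T$, i.e.\ it is a global martingale solution of \eqref{eq:modFullSystemCut}. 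Finally, $\|\tilde{U}(0)-\tilde{U}_*(0)\|=0<\kappa$ and $\tilde{U},\tilde{U}_*\in C([0,T];V)$, so $\tau$ in \eqref{eq:firstExistenceTime} is a strictly positive $\tilde{\mathcal{F}}_t$-stopping time, and on $[0,t\wedge\tau]$ one has $\theta(\|\tilde{U}-\tilde{U}_*\|)=1$ by \eqref{eq:CuttoffDef}; hence $(\tilde{\mathcal{S}},\tilde{U},\tau)$ solves \eqref{eq:AbsEq} up to $\tau$, which is the asserted local martingale solution. I expect the two delicate points to be the identification of the weak limit of the bilinear term $B$ with only strong $L^2([0,T];V)$ convergence available — handled via the antisymmetry of $B$ together with \eqref{eq:Bprop2} — and the verification that $\tilde{W}$ is a Wiener process adapted to the limiting filtration, which is what legitimizes the limit stochastic integral and the application of Lemma~\ref{thm:ConvThm}.
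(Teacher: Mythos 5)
Your proposal is correct and follows essentially the same route as the paper's proof: transfer of the uniform Galerkin estimates to the new variables, weak compactness to obtain the improved $L^2_tD(A)$ and $L^\infty_tV$ regularity of the candidate, term-by-term identification of the drift in a variational formulation (using the same bilinear splitting of $B$ together with the antisymmetry coming from \eqref{eq:Bprop1} and the bound \eqref{eq:Bprop2}, and Vitali for $F$ and $\sigma$), Lemma~\ref{thm:ConvThm} for the stochastic integral, continuity in $C([0,T];V)$ by subtracting the stochastic contribution and invoking deterministic parabolic regularity, and finally removal of the cutoff up to the stopping time $\tau$. The one adjustment worth making is in your displayed bound for the nonlinear term: apply the antisymmetry to the piece $B(\tilde{U}^{m_k}-\tilde{U},\tilde{U}^{m_k})$ as well (as the paper does), so that only $\|\tilde{U}^{m_k}\|_{L^2([0,T];V)}$ appears --- a quantity that is a.s.\ bounded along the subsequence since it converges --- rather than $\|\tilde{U}^{m_k}\|_{L^2([0,T];D(A))}$, which is bounded only in expectation, so that as written your claimed a.s.\ convergence to zero would require an extra passage through convergence in probability and a further subsequence.
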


The rest of this section is devoted to the proof which proceeds in stages.  
The first step is to establish that the candidate solution is in better spaces 
via \eqref{eq:UniformGalEst1} and weak compactness arguments.
We next establish the almost sure limits of each terms arising in
\eqref{eq:GalerkinCutoffGivenbasis} against sufficiently `smooth' test
functions $U^\sharp \in D(A)$. We then show,
using (\ref{eq:UniformGalEst1}) for $\tilde{U}^{n_k}$
and the Vitali convergence theorem (see e.g. \cite{Folland1}) that each of the deterministic terms
converges in $L^2(\Omega \times [0,T])$.  The convergence of the
stochastic terms in \eqref{eq:GalerkinCutoffGivenbasis} are
facilitated by Lemma~\ref{thm:ConvThm}. With these convergences in hand
we make use of a variational argument (se e.g. \cite{GlattHoltzZiane2}) to finally 
conclude \eqref{eq:modFullSystemCut}
for almost every time $t$ and $\omega \in \tilde{\Omega}$ and with the equality understood in $H$.  
We pass to the limit for every $t$ by establishing the improved continuity of $U$.  See 
Subsection~\ref{sec:impr-regul-time} below.  This improved continuity 
justifies the definition of the stopping time $\tau$ specified by 
\eqref{eq:firstExistenceTime}.
We therefore infer that for every $t \geq 0$,
\begin{equation}\label{eq:LookMaNoCutoffLocally}
   \int_0^{t \wedge \tau} \theta(\| \tilde{U} - \tilde{U}_* \| ) B( \tilde{U}) ds
   =
   \int_0^{t \wedge \tau} B( \tilde{U}) ds.
\end{equation}
In this manner we finally conclude that $(\tilde{S}, \tilde{U}, \tau)$
is a local martingale solution of \eqref{eq:AbsEq} and complete the proof.

\subsection{Improved Regularity of the Candidate Solution}
\label{sec:RegularityCand}

By applying the Banach - Alaoglu theorem with
(\ref{eq:UniformGalEst1}) for $\tilde{U}^{n_k}$ in the case $p = 2$ we
infer the existence of elements $\hat{\tilde{U}} \in L^2(\tilde{\Omega};
L^2([0,T], D(A))$ and $\hat{\hat{\tilde{U}}} \in L^2(\tilde{\Omega};
L^\infty([0,T],V))$ such that
\begin{equation}\label{eq:weakConvDA1}
  \tilde{U}^{n_k} \rightharpoonup \hat{\tilde{U}}
  \quad \textrm{ in } 
  L^2(\tilde{\Omega};L^2([0,T], D(A)),
\end{equation}
and
\begin{equation}\label{eq:weakStarConvV1}
  \tilde{U}^{n_k} \rightharpoonup^* \hat{\hat{\tilde{U}}}
  \quad \textrm{ in } 
  L^2(\tilde{\Omega};L^\infty([0,T], V).
\end{equation}
On the other hand, due to \eqref{eq:dataCondForConv},
applied to Lemma~\ref{thm:UniformEstimateGalerkin},
\eqref{eq:UniformGalEst1} we infer that
for some $q > 2$,
\begin{equation}  \label{eq:dominationofVprm}
  \sup_{k} \E \sup_{t \in [0,T]}|\tilde{U}^{n_k}|_{V'}^q
  \leq c \sup_{k} \E \sup_{t \in [0,T]}\|\tilde{U}^{n_k}\|^q < \infty.
\end{equation}
Thus, with \eqref{eq:meaningoftopX}, the Vitali convergence theorem implies that,
\begin{equation}\label{eq:finalConvVall}
  \tilde{U}^{n_k} \rightarrow \tilde{U}
  \quad
  \textrm{in } L^2(\tilde{\Omega}, L^\infty(0,T;V')).
\end{equation}
Take $\mathcal{R} \subset [0,T] \times \Omega$, measurable and
$U^\sharp \in D(A)$.  By applying \eqref{eq:weakConvDA1},
\eqref{eq:weakStarConvV1}, \eqref{eq:finalConvVall}, we find
\begin{equation}\label{eq:finalVarEqualityCandidates}
  \E\int_0^T \chi_{R}\langle \tilde{U}, U^\sharp \rangle ds
  =  \E\int_0^T \chi_{R}\langle\hat{\hat{\tilde{U}}}, U^\sharp \rangle ds
  =  \E\int_0^T \chi_{R}\langle \hat{\tilde{U}}, U^\sharp\rangle ds,
\end{equation}
which means that $\tilde{U} = \hat{\hat{\tilde{U}}} = \hat{\tilde{U}}$ and we conclude that
\begin{equation}\label{eq:RegConclusionsPart1}
  \tilde{U} \in L^2(\tilde{\Omega}, L^2([0,T], D(A))) 
                \cap L^2(\tilde{\Omega}, L^\infty([0,T], V)).
\end{equation}
Furthermore with \eqref{eq:weakConvDA1} we have
\begin{equation}\label{eq:weakConvDA1ActualLim}
  \tilde{U}^{n_k} \rightharpoonup \tilde{U}
  \quad \textrm{ in } 
  L^2(\tilde{\Omega};L^2([0,T], D(A)).
\end{equation}

\subsection{Variational Equality for the Cutoff System}
\label{sec:VerEQ}

Fix $U^\sharp \in D(A)$.  Since, almost surely,
$\tilde{U}^{n_k} \rightarrow \tilde{U}$ in $L^2([0,T],V)$ and noting
that
\begin{displaymath}
  \begin{split}
    \sup_k\E \left[
    	\left( \int_0^T \| \tilde{U}^{n_k} \|^2 dt \right)^2
	\right]
     \leq& \sup_k c \E \left( \sup_{t \in [0,T]} \| \tilde{U}^{n_k}
       \|^4 \right) < \infty
  \end{split}
\end{displaymath}
we infer that $\tilde{U}^{n_k} \rightarrow \tilde{U}$ in $L^2(\Omega,
L^2([0,T],V))$, by the Vitali convergence theorem. By thinning
the sequence $n_k$ if necessary, we may also conclude that
\begin{equation}\label{eq:pointwiseTimeConv}
  \|\tilde{U}^{n_k} - \tilde{U}\|^2 \rightarrow 0,
\end{equation}
for almost every $(t, \omega) \in [0,T] \times \tilde{\Omega}$.

The pointwise convergence in the linear term is direct:
\begin{displaymath}
  \left|
    \int_0^t \langle A (\tilde{U}^{n_k} - \tilde{U}), U^\sharp \rangle ds
    \right|
    \leq c \|U^\sharp\| \left(
    \int_0^T \|\tilde{U}^{n_k} - \tilde{U}\|^2 ds \right)^{1/2}.
\end{displaymath}
We conclude that for almost every $(t, \omega) \in [0,T] \times \tilde{\Omega}$
\begin{equation}\label{eq:FinalAConv}
  \int_0^t \langle A \tilde{U}^{n_k}, U^\sharp \rangle ds
  \rightarrow 
  \int_0^t \langle A \tilde{U}, U^\sharp \rangle ds.
\end{equation}
For $B$ we estimate
\begin{displaymath}
  \begin{split}
    \biggl|
      \int_0^t \langle
        &\theta(\|\tilde{U}^{n_k} - \tilde{U}^{n_k}_* \|) B^{n_k}(\tilde{U}^{n_k})-
        \theta(\|\tilde{U} - \tilde{U}_* \|) B(\tilde{U}), U^\sharp
      \rangle ds \biggr|\\
      \leq&
          \left|
      \int_0^t \langle
        \theta(\|\tilde{U}^{n_k} - \tilde{U}^{n_k}_* \|) (B^{n_k}(\tilde{U}^{n_k})-
        B(\tilde{U})), U^\sharp
      \rangle ds \right|
         +
          \left|
      \int_0^t \langle
        (\theta(\|\tilde{U}^{n_k} - \tilde{U}^{n_k}_* \|)-
        \theta(\|\tilde{U} - \tilde{U}_* \|))
        B(\tilde{U}), U^\sharp
      \rangle ds \right|\\
      \leq&
          \left|
      \int_0^t \langle
        \theta(\|\tilde{U}^{n_k} - \tilde{U}^{n_k}_* \|) (B(\tilde{U}^{n_k})-
        B(\tilde{U})), P_{n_k} U^\sharp
      \rangle ds \right|
         +
         \left|
           \int_0^t \langle
        \theta(\|\tilde{U}^{n_k} - \tilde{U}^{n_k}_* \|)
        B(\tilde{U}), Q_{n_k} U^\sharp
      \rangle ds \right|\\
         &+
          \left|
            \int_0^t \langle
        (\theta(\|\tilde{U}^{n_k} - \tilde{U}^{n_k}_* \|)-
        \theta(\|\tilde{U} - \tilde{U}_* \|))
        B(\tilde{U}), U^\sharp
      \rangle ds \right|\\
    :=& J_1^{n_k} + J_2^{n_k} + J_3^{n_k}.
  \end{split}
\end{displaymath}
We address the elements on the right hand side in reverse order.  
Due to \eqref{eq:pointwiseTimeConv} we have, for almost
every $(t, \omega) \in [0,T] \times \tilde{\Omega}$:\footnote{Since 
   $\tilde{U}_0^{n} \rightarrow \tilde{U}_0$ in $L^2(\tilde{\Omega};V)$, 
   it follows that $\E \left( \sup_{t\in [0,T]} \|\tilde{U}^{n}_{*} -
      \tilde{U}_{*}\|^2 \right) \rightarrow 0.$}
\begin{displaymath}
  \| \tilde{U}^n- \tilde{U}^n_*\| 
  \rightarrow \| \tilde{U} - \tilde{U}_*\|.
\end{displaymath}
We therefore infer,
\begin{displaymath}
  \theta(\| \tilde{U}^n - \tilde{U}^n_*\|)
  \rightarrow \theta(\| \tilde{U} - \tilde{U}_*\|),
\end{displaymath}
almost everywhere on $[0,T] \times  \tilde{\Omega}$.
By assumptions (\ref{eq:Bprop1}),
(\ref{eq:Bprop2}),
\begin{displaymath}
  \left| \langle
      (\theta(\|\tilde{U}^{n_k} - \tilde{U}^{n_k}_* \|)-
      \theta(\|\tilde{U} - \tilde{U}_* \|))
      B(\tilde{U}), U^\sharp \rangle \right|dt
  \leq c |A U^\sharp| \|\tilde{U}\|^2,
\end{displaymath}
and since,
\begin{displaymath}
  \begin{split}
  \E \int_0^T \int_0^t &\left| \langle
       (\theta(\|\tilde{U}^{n_k} - \tilde{U}^{n_k}_* \|)-
       \theta(\|\tilde{U} - \tilde{U}_* \|))
       B(\tilde{U}), U^\sharp \rangle \right|ds dt\\
     &\leq c   \E \int_0^T  \left| \langle
      (\theta(\|\tilde{U}^{n_k} - \tilde{U}^{n_k}_* \|)-
      \theta(\|\tilde{U} - \tilde{U}_* \|))
      B(\tilde{U}), U^\sharp \rangle \right|dt,\\
\end{split}
\end{displaymath}
the Lebesgue dominated convergence theorem therefore implies:
\begin{displaymath}
  \E \int_0^T J_3^{n_k}dt \rightarrow 0.
\end{displaymath}
Thinning the sequence, if necessary we conclude that
\begin{equation}\label{eq:J3conclusionVERyIMPortant1}
   J_3^{n_k} \rightarrow 0 \quad a.e. \;\; (t, \omega) 
   \in [0,T] \times \tilde{\Omega}.
\end{equation}
We estimate the second term, $J_2^{n_k}$ according to
\begin{equation}\label{eq:BestJ2obs1}
  J_2^{n_k} \leq 
  | Q_{n_k} A U^\sharp | \int_0^T \|\tilde{U}\|^2 ds
  \rightarrow 0,
\end{equation}
for almost every $(t, \omega)$.  Finally for $J^{n_{k}}_{1}$, the bilinearity of $B$ implies that
\begin{displaymath}
  \begin{split}
  B(\tilde{U}^{n_k}, \tilde{U}^{n_k}) - B(\tilde{U}, \tilde{U})
    = B(\tilde{U}^{n_k}- \tilde{U}, \tilde{U}^{n_k}) 
      + B(\tilde{U},\tilde{U}^{n_k} - \tilde{U}).
  \end{split}
\end{displaymath}
Again by the assumptions (\ref{eq:Bprop1}), (\ref{eq:Bprop2}) we infer,
\begin{displaymath}
  \begin{split}
    J_1^{n_k}
    \leq& c
    |A U^\sharp| \int_0^T ( \|\tilde{U}^{n_k}\|  + \|\tilde{U}\|)
    \| \tilde{U}^{n_k} -\tilde{U}\| ds\\
    \leq& c
    |A U^\sharp| 
    \left(\int_0^T ( \|\tilde{U}^{n_k}\|^{2}  + \|\tilde{U}\|^{2}) \, ds \right)^{1/2}
    \left(\int_0^T \| \tilde{U}^{n_k} 
          -\tilde{U}\|^2 ds \right)^{1/2}.\\
  \end{split}
\end{displaymath}
Thus, with assumption \eqref{eq:meaningoftopX}, we infer that
\begin{equation}\label{eq:J1conclusionVERyIMPortant}
 J_1^{n_k} \rightarrow 0 \quad 
 \textrm{ for almost all } (t, \omega).
\end{equation}
Combining \eqref{eq:J1conclusionVERyIMPortant},
\eqref{eq:BestJ2obs1}, \eqref{eq:J3conclusionVERyIMPortant1} we
conclude that, for almost every $(t, \omega)$,
\begin{equation}\label{eq:FinalBconvConc}
  \int_0^t \langle
  \theta( \|\tilde{U}^{n_k} - \tilde{U}^{n_k}_* \|) B^n(\tilde{U}^n),
  U^\sharp \rangle ds
  \rightarrow
  \int_0^t \langle
  \theta( \|\tilde{U} - \tilde{U}_* \|) B(\tilde{U}),
  U^\sharp \rangle ds.
\end{equation}

For the remaining deterministic terms we estimate
\begin{equation}\label{eq:FlowerOrder1}
  \begin{split}
    \biggl| \int_0^t \langle F^{n_k}(\tilde{U}^{n_k}) - F(\tilde{U}), U^\sharp \rangle
      ds
      \biggr|
      \leq& c
       |U^\sharp|\int_0^t |F(\tilde{U}^{n_k}) - F(\tilde{U})|ds
       + |Q_{n_k} U^\sharp| \int_0^t |F(\tilde{U})|ds 
       := J_4^{n_k} + J_5^{n_k}.
  \end{split}
\end{equation}
Due to (\ref{eq:pointwiseTimeConv}) and the continuity assumed
for $F$, \eqref{eq:sublinerConF}, we infer that for almost every
$(\omega, t)$,
\begin{displaymath}
  |F(\tilde{U}^{n_k}) - F(\tilde{U})| \rightarrow 0. 
\end{displaymath}
On the other hand by (\ref{eq:sublinerConF}),
\begin{displaymath}
  |F(\tilde{U}^{n_k}) - F(\tilde{U})| 
    \leq c (1 + \|\tilde{U}^{n_k}\| + \|\tilde{U}\|),
\end{displaymath}
and we infer that,
\begin{displaymath}
  \begin{split}
 \sup_k \E \int_0^T &
    |F(\tilde{U}^{n_k}) - F(\tilde{U})|^{2} dt
  \leq c \sup_k
   \E \int_0^T(1 + \|\tilde{U}^{n_k}\|^2 + \|\tilde{U}\|^2) dt < \infty.\\
  \end{split}
\end{displaymath}
In consequence $\{ |F(\tilde{U}^{n_k}) - F(\tilde{U})| \}_{k \geq0}$
is uniformly integrable over $\tilde{\Omega} \times [0,T]$.  By
applying the Vitali Convergence Theorem we have 
$\E \int_{0}^{T} | F^{n_{k}}(\tilde{U}^{n_{k}}) - F(\tilde{U}) | dt \rightarrow 0$.
Thinning the sequence further if needed, we infer that almost everywhere
in $\tilde{\Omega}$.
\begin{displaymath}
  \begin{split}
	\int_{0}^{t} | F^{n_{k}}(\tilde{U}^{n_{k}}) - F(\tilde{U}) | dt 
	\leq	\int_{0}^{T} | F^{n_{k}}(\tilde{U}^{n_{k}}) - F(\tilde{U}) | dt \rightarrow 0.
  \end{split}
\end{displaymath}
in order to finally conclude that
\begin{equation}  \label{eq:FlowerOrderJ4Concl}
  J_4^{n_k} \rightarrow 0 \quad a.e. \; (\omega, t).
\end{equation}
Turning to the second term $J_5^{n_k}$, we see, again as a consequence
of the assumption  (\ref{eq:sublinerConF}), 
that $\int_0^t
|F(\tilde{U})|ds \leq c \int_0^T (1 + \|\tilde{U}\|)ds < \infty$ and so
\begin{equation}  \label{eq:FlowerOrderJ5Concl}
  J_5^{n_k} \rightarrow 0 \quad
   a.e. \, (\omega, t).
\end{equation}
In conclusion, by  (\ref{eq:FlowerOrderJ4Concl}),
(\ref{eq:FlowerOrderJ5Concl}) we finally have
\begin{equation}\label{eq:finalfinalfinalFpntWise}
  \int_0^t \langle F^{n_k}(\tilde{U}^{n_k}), U^\sharp \rangle ds
  \rightarrow \int_0^t \langle F(\tilde{U}), U^\sharp \rangle ds
\end{equation}
for almost every $(\omega, t) \in \tilde{\Omega} \times [0,T]$.

We next establish the convergences to the deterministic terms in
\eqref{eq:modFullSystemCut} in the space
$L^q(\tilde{\Omega} \times [0,T])$, $1 \leq q < 2$.  Notice that
due to \eqref{eq:Bprop2}, \eqref{eq:sublinerConF},
\begin{displaymath}
  \begin{split}
  \E \int_0^T & 
     \left|
       \int_0^t 
       \langle
       A\tilde{U}^{n_k}+
       \theta( \|\tilde{U}^{n_k}- \tilde{U}^{n_k}_* \|)
         B^n(\tilde{U}^{n_k})+
       F^n(\tilde{U}^{n_k}), U^\sharp
       \rangle ds
       \right|^2dt\\
       \leq& c
   \E \int_0^T 
     |\langle
       A\tilde{U}^{n_k}+
       \theta( \|\tilde{U}^{n_k}- \tilde{U}^{n_k}_* \|)
         B^n(\tilde{U}^{n_k})+
       F^n(\tilde{U}^{n_k}), U^\sharp
       \rangle|^2 ds\\
       \leq& c | AU^\sharp |^2
   \E \int_0^T (
      \|\tilde{U}^{n_k}\|^2 + \|\tilde{U}^{n_k}\|^4 + 1)ds.
  \end{split}
\end{displaymath}
Thus, for every $q \in [1,2)$,
\begin{displaymath}
  \begin{split}
    &\left\{
       \int_0^t 
       \langle
       A\tilde{U}^{n_k}+
       \theta( \|\tilde{U}^{n_k}- \tilde{U}^{n_k}_* \|)
         B^{n_k}(\tilde{U}^{n_k})+
       F^{n_k}(\tilde{U}^{n_k}), U^\sharp
       \rangle ds \right\}_{k \geq 0}\\
    &\quad \textrm{ is uniformly integrable in }
    L^q(\tilde{\Omega} \times [0,T]).
  \end{split}
\end{displaymath}
Combining this with (\ref{eq:FinalAConv}), (\ref{eq:FinalBconvConc}),
(\ref{eq:finalfinalfinalFpntWise}) we conclude that for every
$q \in [1,2)$:
\begin{equation}\label{eq:FinalConConvForDeterministicTerms}
  \begin{split}
    \int_0^t 
       &\langle
       A\tilde{U}^{n_k}+ 
       \theta( \|\tilde{U}^{n_k}- \tilde{U}^{n_k}_* \|)
        B^{n_k}(\tilde{U}^{n_k})+
      F^{n_k}(\tilde{U}^{n_k}), U^\sharp \rangle ds\\
       &\longrightarrow
    \int_0^t 
       \langle
       A\tilde{U} + 
       \theta( \|\tilde{U} - \tilde{U}_* \|)
         B(\tilde{U})+
         F(\tilde{U}), U^\sharp \rangle ds,\\
  \end{split}
\end{equation}
in $L^q([0,T] \times \Omega)$.

The stochastic terms are handled differently.  Using
(\ref{eq:decompEstimates}) and (\ref{eq:BndCondSig}) we estimate
\begin{displaymath}
  \begin{split}
   \|\sigma^{n_k}(\tilde{U}^{n_k})
        -\sigma(\tilde{U})\|_{L_2(\mathfrak{U},H)}
        \leq&    
   \|\sigma(\tilde{U}^{n_k})
        -\sigma(\tilde{U})\|_{L_2(\mathfrak{U},H)} +
     \|Q_{n_k}\sigma(\tilde{U})\|_{L_2(\mathfrak{U},H)}\\
     \leq & 
   \|\sigma(\tilde{U}^{n_k})
        -\sigma(\tilde{U})\|_{L_2(\mathfrak{U},H)} +
     \frac{1}{\lambda_{n_k}^{1/2}}\|\sigma(\tilde{U})\|_{L_2(\mathfrak{U},V)}\\
     \leq&
   \|\sigma(\tilde{U}^{n_k})
        -\sigma(\tilde{U})\|_{L_2(\mathfrak{U},H)} +
     \frac{c}{\lambda_{n_k}^{1/2}}(1 + \|\tilde{U} \|).\\
  \end{split}
\end{displaymath}
Thus, due to (\ref{eq:pointwiseTimeConv}) and the assumed continuity of $\sigma$
(see (\ref{eq:BndCondSig})) we conclude that
\begin{displaymath}
  \|\sigma^{n_k}(\tilde{U}^{n_k})
        -\sigma(\tilde{U})\|_{L_2(\mathfrak{U},H)} \rightarrow 0,
\end{displaymath}
for almost every $(\omega, t) \in \tilde{\Omega} \times [0,T]$.   On
the other hand, we observe that
\begin{displaymath}
  \sup_{n_k}\E \left( \int_0^T \|
  \sigma^{n_k}(\tilde{U}^{n_k})\|_{L_2(\mathfrak{U}, H)}^4 ds \right)
  \leq   c \sup_{n_k} \E \left( \int_0^T (1 + \|
  \tilde{U}^{n_k}\|^4) ds \right),
\end{displaymath}
where we have again made use of the sublinear condition
(\ref{eq:BndCondSig}).   We therefore
infer that $\|\sigma^{n_k}(\tilde{U}^{n_k})\|_{L_{2}(\mathfrak{U}, H)}$ 
is uniformly integrable in $L^p(\Omega \times [0,T] )$ for any $p \in [1, 4)$.
With the Vitali convergence theorem we infer, for all such $p \in [1, 4)$,
\begin{equation}\label{eq:suffCondLemma21}
  \sigma^{n_k}(\tilde{U}^{n_k}) 
  \rightarrow  \sigma(\tilde{U})
  \quad \textrm{ in } L^p(\tilde{\Omega};L^p([0,T], L_2(\mathfrak{U},H))).
\end{equation}
In particular \eqref{eq:suffCondLemma21} implies the convergence
in probability of $\sigma^{n_k}(\tilde{U}^{n_k})$ in $L^2([0,T],$ $L_2(\mathfrak{U},H)))$.
Thus, along with the assumption \eqref{eq:meaningoftopW},
we apply Lemma~\ref{eq:StochasticVitaliConc} and infer that
\begin{equation}\label{eq:lemma21conlusions}
  \int_0^t 
     \sigma^{n_k}(\tilde{U}^{n_k})
  d\tilde{W}_{n_k}
        \rightarrow   
    \int_0^t
   \sigma(\tilde{U})
    d\tilde{W},
\end{equation}
in probability $L^2([0,T], H))$.   Another application of the Vitali convergence 
theorem using estimates involving \eqref{eq:BDG}, \eqref{eq:suffCondLemma21} 
shows that the convergence in \eqref{eq:lemma21conlusions} occurs moreover in
$L^{2}(\Omega; L^2([0,T], H))$.

With the above details in hand we now establish \eqref{eq:modFullSystemCut}
in a variational sense.
Fix any $U^\sharp \in D(A)$, $\mathcal{R} \subset \tilde{\Omega}
\times [0,T]$ measurable.  Using \eqref{eq:weakConvDA1ActualLim} 
and then \eqref{eq:FinalConConvForDeterministicTerms} 
and \eqref{eq:lemma21conlusions} we observe that

\begin{displaymath}
  \begin{split}
    \E &\int_0^T \chi_{\mathcal{R}} \langle \tilde{U}, U^\sharp\rangle dt
   =
    \lim_{k \rightarrow \infty}\E 
     \int_0^T \chi_{\mathcal{R}} \langle \tilde{U}^{n_k}, U^\sharp\rangle dt\\
   =& 
   \lim_{k \rightarrow \infty}\E 
     \int_0^T \chi_{\mathcal{R}} \langle \tilde{U}_0^{n_k}, U^\sharp\rangle dt\\
     &-
     \lim_{k \rightarrow \infty}\E 
       \int_0^T \chi_{\mathcal{R}} 
       \left(\int_0^t \langle 
         A\tilde{U}^{n_k}
        ,  U^\sharp\rangle  ds
      \right) dt\\
     &-
     \lim_{k \rightarrow \infty}\E 
      \int_0^T \chi_{\mathcal{R}} 
      \left( 
        \int_0^t \langle 
        \theta( \|\tilde{U}^{n_k}- \tilde{U}^{n_k}_* \|)
        B^{n_k}(\tilde{U}^{n_k})
       ,  U^\sharp\rangle  ds
     \right) dt\\
     &-
     \lim_{k \rightarrow \infty}\E 
     \int_0^T \chi_{\mathcal{R}} 
     \left( 
       \int_0^t \langle 
      F^{n_k}(\tilde{U}^{n_k})
       ,  U^\sharp\rangle  ds
    \right) dt\\
     &+
     \lim_{k \rightarrow \infty}\E 
     \int_0^T \chi_{\mathcal{R}} 
     \left( 
       \int_0^t \langle 
      \sigma^{n_k}(\tilde{U}^{n_k})
       ,  U^\sharp\rangle  dW^{n_k}
    \right) dt\\
  =& 
   \E 
      \int_0^T \chi_{\mathcal{R}} \left( 
        \langle \tilde{U}_0, U^\sharp \rangle -
      \int_0^t \langle 
        A\tilde{U}+
        \theta( \|\tilde{U}- \tilde{U}_* \|)
      B(\tilde{U}) +
      F(\tilde{U})
       ,  U^\sharp\rangle  ds
     \right) dt\\
   &+
  \E 
    \int_0^T \chi_{\mathcal{R}} 
    \left( 
      \int_0^t \langle 
    \sigma(\tilde{U})
     ,  U^\sharp\rangle  dW
   \right) dt.
\end{split}
\end{displaymath}
Since this equality holds over all such $\mathcal{R}$ we may
conclude that for almost every $(\omega, t) \in \tilde{\Omega} \times
[0,T]$  and every $U^{\sharp} \in D(A)$ that,
\begin{equation}\label{eq:conclusionsOnVariationalEquality}
  \begin{split}
 \langle \tilde{U}(t), U^\sharp \rangle  +
  \int_0^t \langle 
        A\tilde{U}+
        \theta( \|\tilde{U}- \tilde{U}_* \|)&
      B(\tilde{U}) +
      F(\tilde{U})
       ,  U^\sharp\rangle  ds
  =   \langle \tilde{U}_0, U^\sharp \rangle +
    \int_0^t \langle 
    \sigma(\tilde{U})
     ,  U^\sharp\rangle  dW.
  \end{split}
\end{equation}
Moreover, due to \eqref{eq:RegConclusionsPart1} established above,
it follows by density that \eqref{eq:conclusionsOnVariationalEquality}
holds also over $U^{\sharp} \in H$ and hence \eqref{eq:modFullSystemCut}
in the analogous sense to \eqref{eq:spdeAbstracMG}.

\subsection{Improved Regularity In Time}
\label{sec:impr-regul-time}

With \eqref{eq:conclusionsOnVariationalEquality} and \eqref{eq:RegConclusionsPart1} 
in hand it remains only
to establish better continuity, in time, for $U$.  More precisely, we 
must show that 
$\tilde{U} \in C([0,T]; V)$ a.s.  Of course, such a condition is needed
in order to justify the definition \eqref{eq:firstExistenceTime}.

To this end we define
\begin{equation}\label{eq:AuxLinearSystem}
  d Z + AZ = \sigma(\tilde{U}) d\tilde{W}, \quad Z(0) = \tilde{U}_0.
\end{equation}
Observe that since
$\sigma(\tilde{U}) \in L^2(\Omega, L^2([0,T], L_2(\mathfrak{U}, V)))$
we have
\begin{equation}\label{eq:ZedReg}
Z \in L^2(\tilde{\Omega}, C([0,T], V)) \cap L^2(\tilde{\Omega}, L^2([0,T]; D(A))).
\end{equation}
Now take $\bar{U} = \tilde{U} - Z$.  Subtracting
\eqref{eq:AuxLinearSystem}  from \eqref{eq:modFullSystemCut}
we find
\begin{equation}\label{eq:equationForUbar}
  \begin{split}
  \frac{d}{dt} \bar{U} + A \bar{U} 
        + \theta(\| \bar{U}  &+ Z - \tilde{U}_* \|) B (\bar{U} + Z)
        + F(\bar{U} + Z)  = 0, \\
        \bar{U}(0) &= \tilde{U}_0.
 \end{split}
\end{equation}
Due to \eqref{eq:ZedReg}, \eqref{eq:RegConclusionsPart1},
we infer that $\bar{U} \in L^2(\tilde{\Omega}, L^2(0,T; D(A)) \cap L^{\infty}([0,T], V)$
and hence that,
\begin{equation}\label{eq:RegularityRHSUbarEqn}
    A \bar{U}, \;\;
    \theta(\| \bar{U}  + Z - \tilde{U}_* \|) B (\bar{U} + Z), \;\;
    F(\bar{U} + Z)
    \in L^2(\tilde{\Omega}, L^2([0,T], H).
\end{equation}
We conclude with \eqref{eq:equationForUbar} that
\begin{equation}\label{eq:SetUpForTemamTimeRegThm}
  \frac{d}{dt} A^{1/2} \bar{U} \in L^{2}(\Omega; L^2(0,T;V')), \quad
  A^{1/2} \bar{U} \in L^{2}(\Omega; L^{2}(0,T;V)).
\end{equation}
By applying
\cite[Chapter 3, Lemma 1.2]{Temam1} we infer that $A^{1/2} \bar{U} \in C([0,T], H)$ so
that, with \eqref{eq:ZedReg}, we deduce that
\begin{equation}\label{eq:almostsureContinuityConclusionBarU}
  \tilde{U} \in C([0,T];V), \; a.s.
\end{equation}
With \eqref{eq:conclusionsOnVariationalEquality}, \eqref{eq:RegConclusionsPart1},
and \eqref{eq:almostsureContinuityConclusionBarU} we finally conclude that 
$(\tilde{\mathcal{S}}, \tilde{U})$ is a global Martingale solution of \eqref{eq:modFullSystemCut}.
Furthermore, having justified \eqref{eq:firstExistenceTime} and applying 
\eqref{eq:LookMaNoCutoffLocally} to \eqref{eq:conclusionsOnVariationalEquality}
we have that $(\tilde{\mathcal{S}}, \tilde{U}, \tau)$ is a local Martingale solution of \eqref{eq:AbsEq}.
The proof of Proposition~\ref{thm:PassageToLimitThm} is therefore complete.

\section{Appendix: Proof of the Convergence Theorem}
\label{sec:append-proof-conv}

In this final section we provide a proof of Lemma~\ref{thm:ConvThm}. 
Convergence results similar to Lemma~\ref{thm:ConvThm} have appeared in previous works 
(see e.g. \cite{Bensoussan1}, \cite{GyongyKrylov1}). However, to the best of our knowledge, 
no one up to the present has provided a detailed proof.
Note that in the present work Lemma~\ref{thm:ConvThm} is an important technical tool for the 
passage to the limit, as
detailed above in Section~\ref{sec:passToLim}.    

To simplify the exposition, we begin by introducing the notations:
\begin{displaymath}
 \begin{split}
    \mathcal{I}^n := \int_0^t G^n dW^n = \sum_{k\geq 0} \int_0^t G^n_kdW_k^n 
    = \sum_{k \geq 0} Y_k^n, \quad \quad
    \mathcal{I} :=\int_0^t G dW = \sum_k \int_0^t G_kdW_k = \sum_k Y_k.
  \end{split}
\end{displaymath}
For the truncations we set
\begin{displaymath}
 \begin{split}
    \mathcal{I}_N^n :=  \sum_{ N \geq k \geq 0} Y_k^n,  \quad
    \mathcal{J}_{N}^{n} := \mathcal{I}^{n} -    \mathcal{I}_N^n, \quad
    \mathcal{I}_N :=  \sum_{ N \geq k \geq 0} Y_k, \quad
    \mathcal{J}_N := \mathcal{I}^{n} -    \mathcal{I}_N^n.
  \end{split}
\end{displaymath}
With these notations we now split 
\begin{displaymath}
\begin{split}
   |\mathcal{I}^n& - \mathcal{I}|_{L^2([0,T],X)}
    \leq |\mathcal{I}^n - \mathcal{I}^n_N|_{L^2([0,T],X)}+
    |\mathcal{I}^n_N - \mathcal{I}_N|_{L^2([0,T],X)}
    +
    |\mathcal{I}_N - \mathcal{I}|_{L^2([0,T],X)}\\
  \end{split}
\end{displaymath}
and observe that the proof of Lemma~\ref{thm:ConvThm} is complete once we 
establish that
\begin{equation}\label{eq:SufficentCond3TermsConvThm}
\begin{cases}
\textrm{For every } \epsilon > 0, 
\lim\limits_{N \rightarrow \infty} \sup\limits_{n \geq N} \Prb \left(
     |\mathcal{J}^n_N|_{L^2([0,T],X)} > \epsilon
          \right) =0,\\
\lim\limits_{n \rightarrow \infty} |Y^{n}_{k} - Y_{k}|_{L^2([0,T],X)} = 0 \textrm{ in Probability, for each fixed } k,\\
\lim\limits_{N \rightarrow \infty} |\mathcal{J}_N |_{L^2([0,T],X)} = 0 \textrm{ in Probability}.
\end{cases}
\end{equation}

To establish each of the convergences in \eqref{eq:SufficentCond3TermsConvThm} we 
make extensive use of the following martingale inequality (see e.g. \cite{GihmanSkorohod1})
\begin{equation}\label{eq:GilSkInEq}
\begin{split}
    \Prb \left( \int_{0}^{T} \left| \int_{0}^{t} F dW \right|^{2}_{X} dt  > c \right)
    \leq \frac{\kappa T}{c} + \Prb \left( \int_{0}^{T} |F|^{2}_{L_{2}(\mathfrak{U},X)} dt > \kappa \right).
\end{split}
\end{equation}
Here $c$, $\kappa$ may be any positive constants and $F$ any $\mathcal{F}_{t}$ predictable
element in $L^{2}([0,T]; L_{2}(\mathfrak{U}, X))$.  For the first item in \eqref{eq:SufficentCond3TermsConvThm}
we apply \eqref{eq:GilSkInEq} and observe that for any $\epsilon, \delta >0$
\begin{displaymath}
\begin{split}
   \Prb \left(|\mathcal{J}^n_{N} |_{L^2([0,T],X)} > \epsilon
          \right) 
          &\leq \frac{\delta}{3} +    
          \Prb \left(  \sum_{k \geq N} \int_{0}^{T}| G^{n}_{k} |^{2} dt >  \frac{\delta \epsilon^{2}}{3 T} \right)\\
          &\leq \frac{\delta}{3} +    
          \Prb \left(  \int_{0}^{T} \! \! |G^{n} - G|^{2}_{L_{2}(\mathfrak{U}, X)}  dt >  \frac{\delta \epsilon^{2}}{12 T} \right)
          + \Prb \left(   \sum_{k \geq N}  \int_{0}^{T}  \! \!  | G|^{2} dt >  \frac{\delta \epsilon^{2}}{12 T} \right).
\end{split}
\end{displaymath}
With this estimate, the assumptions on $G$ and \eqref{eq:ConvAsmpStochIntegrand} we infer
the first item in \eqref{eq:SufficentCond3TermsConvThm}.  The third item in 
\eqref{eq:SufficentCond3TermsConvThm} is established in similar manner via an application of
\eqref{eq:GilSkInEq}.

It remains to address the second item in \eqref{eq:SufficentCond3TermsConvThm}.
In order to treat these terms we introduce the functional:
\begin{equation}\label{eq:smoothingFunctional}
  \mathcal{R}_{\rho}(F) 
  = \frac{1}{\rho} \int_0^t 
  \exp\left( - \frac{t -s}{\rho} \right) F(s) ds
  \quad F \in L^{1}([0,T], X), \rho > 0.
\end{equation}
Using this functional and then integrating by parts we estimate
\begin{equation}\label{eq:FirstSplittingEsp}
  \begin{split}
  |Y_{k}^{n}& - Y_{k}|_{X} = \biggl| \int_0^t G^n_k dW^n_k - \int_0^t G_k dW_k  \biggr|_{X}\\
  \leq&
 \left| \int_0^t ( G^n_k - 
   \mathcal{R}_\rho(G^n_{k}) )dW^n_k\right|_X +
 \left| \int_0^t (
  \mathcal{R}_\rho(G_{k}) - G_k) dW_k \right|_X
  +
 \left|\int_0^t \mathcal{R}_\rho(G^n_{k}) dW^n_k
   -\int_0^t \mathcal{R}_\rho(G_{k}) dW_k \right|^2_X\\
 \leq&
  \left| \int_0^t ( G^n_k - 
   \mathcal{R}_\rho(G^n_{k}) )dW^n_k\right|_X +
 \left| \int_0^t (
  \mathcal{R}_\rho(G_{k}) - G_k) dW_k \right|_X 
  + |\mathcal{R}_\rho(G_k) W_k -\mathcal{R}_\rho(G^n_{k}) W^n_k|_X \\
  & + \frac{1}{\rho} \left|\int_0^t(\mathcal{R}_\rho(G_{k}) W_k
        -\mathcal{R}_\rho(G^n_{k}) W^n_k)ds\right|_X
  + \frac{1}{\rho} \left|\int_0^t( G_k  W_k
        -G^n_k W^n_k)ds \right|_X. \\
  \end{split}
\end{equation}

We now proceed to treat each of the term on the right hand side of \eqref{eq:FirstSplittingEsp}.  
Fix $\epsilon, \delta > 0$. For the first term in \eqref{eq:FirstSplittingEsp} we apply \eqref{eq:GilSkInEq} and estimate
\begin{equation}\label{eq:conStTermJ1}
\begin{split}
 \Prb\biggl( \biggl| \int_0^t ( G^n_k& - 
   \mathcal{R}_\rho(G^n_{k}) )dW^n_k \biggr|_{L^{2}([0,T];X)} > \epsilon \biggr)
    \leq \delta 
         + \Prb \left(  \int_{0}^{T} |G^{n}_{k} - \mathcal{R}_{\rho} ( G^{n}_{k}) |^{2}_{X} dt > \frac{\delta \epsilon^{2}}{T} \right)\\
     \leq& \delta  +
       \Prb\left(  \int_0^T |G_k^n - G_k|^2_X dt > \frac{\delta \epsilon^{2}}{3 T} \right) 
       + \Prb\left(  \int_0^T |G_k - \mathcal{R}_\rho(G_{k})|^2_X dt > \frac{\delta \epsilon^{2}}{3 T} \right) \\
       &+ \Prb\left(  \int_0^T | \mathcal{R}_\rho(G_{k}) - \mathcal{R}_\rho(G^n_{k})|^2_X dt > \frac{\delta \epsilon^{2}}{3 T} \right)\\
    \leq& \delta  + 2
       \Prb\left(  \int_0^T |G_k^n - G_k|^2_X dt > \frac{\delta \epsilon^{2}}{3 T} \right) 
       + \Prb\left(  \int_0^T |G_k - \mathcal{R}_\rho(G_{k})|^2_X dt > \frac{\delta \epsilon^{2}}{3 T} \right). \\
\end{split}
\end{equation}
With \eqref{eq:GilSkInEq} we also find that
\begin{equation}\label{eq:conStTermJ2}
\begin{split}
 \Prb \biggl( \biggl|\int_0^t (
  \mathcal{R}_\rho(G_{k}) - G_k) dW_k \biggr|_{L^{2}([0,T];X)} > \epsilon \biggr)
    \leq \delta  + \Prb\left(  \int_0^T |G_k - \mathcal{R}_\rho(G_{k})|^2_X dt > \frac{\delta \epsilon^{2}}{T} \right)
\end{split}
\end{equation}
The last three items are treated differently
\begin{equation}\label{eq:conStTermJ3}
\begin{split}
 \Prb \biggl(& |\mathcal{R}_\rho(G_k) W_k -\mathcal{R}_\rho(G^n_{k}) W^n_k|_{L^{2}([0,T];X)} > \epsilon \biggr)\\
    \leq& \Prb \left( \int_0^T |\mathcal{R}_\rho(G_k) W_k^{n}
            -\mathcal{R}_\rho(G^n_k) W_k^{n}|^2_X dt   > \frac{\epsilon^{2}}{4} \right)
         + \Prb 
         	        \left( \int_0^T 
	        |\mathcal{R}_\rho(G_k) W_k
            -\mathcal{R}_\rho(G_k) W^n_k|^2_X dt >   \frac{\epsilon^{2}}{4} \right)\\
      \leq& \Prb \left( \sup_{t \in [0,T]} |W_{k}^{n}|^{2} \int_0^T |G_k
            -G^n_k |^2_X dt   > \frac{\epsilon^{2}}{4} \right)
         + \Prb 
         	        \left(  \sup_{t \in [0,T]} |W_{k} -W_{k}^{n}|^{2}  \int_0^T 
	        |G_{k}|^2_X dt >   \frac{\epsilon^{2}}{4} \right).\\
\end{split}
\end{equation}
Similar estimates lead to
\begin{equation}\label{eq:conStTermJ4}
\begin{split}
 \Prb \biggl( \biggl| \frac{1}{\rho} &\int_0^t(\mathcal{R}_\rho(G_{k}) W_k
        -\mathcal{R}_\rho(G^n_{k}) W^n_k)ds \biggr|_{L^{2}([0,T];X)}
         > \epsilon \biggr)\\
      \leq& \Prb \left( \sup_{t \in [0,T]} |W_{k}^{n}|^{2} \int_0^T |G_k
            -G^n_k |^2_X dt   > \frac{\epsilon^{2} \rho^{2}}{4T^{2}} \right)
         + \Prb 
         	        \left(  \sup_{t \in [0,T]} |W_{k} -W_{k}^{n}|^{2}  \int_0^T 
	        |G_{k}|^2_X dt >   \frac{\epsilon^{2} \rho^{2}}{4T^{2}} \right).\\
\end{split}
\end{equation}
The final term in \eqref{eq:FirstSplittingEsp} yields to an identical estimate.

Collecting the estimates \eqref{eq:conStTermJ1}, \eqref{eq:conStTermJ2}, \eqref{eq:conStTermJ3}, 
\eqref{eq:conStTermJ4} we infer that
\begin{equation}\label{eq:conStTermSummary}
\begin{split}
   \Prb (& |Y_{k}^{n} - Y_{k}|_{L^{2}([0,T], X)}  > 5\epsilon)\\
       \leq& 2\delta +
       2\Prb\left(  \int_0^T |G_k^n - G_k|^2_X dt > \frac{\delta \epsilon^{2}}{3 T} \right) 
       + 2\Prb\left(  \int_0^T |G_k - \mathcal{R}_\rho(G_{k})|^2_X dt > \frac{\delta \epsilon^{2}}{3 T} \right) \\
       &+ 3\Prb \left( \sup_{t \in [0,T]} |W_{k}^{n}|^{2} \int_0^T |G_k
            -G^n_k |^2_X dt   > \frac{\epsilon^{2} \rho^{2}}{4T} \right)
         + 3\Prb 
         	        \left(  \sup_{t \in [0,T]} |W_{k} -W_{k}^{n}|^{2}  \int_0^T 
	        |G_{k}|^2_X dt >   \frac{\epsilon^{2} \rho^{2}}{4T} \right).\\
\end{split}
\end{equation}
Since $\delta, \epsilon > 0$ are arbitrary and given basic properties of the 
functional \eqref{eq:smoothingFunctional} along with \eqref{eq:ConvAsmp}
we may now infer the second item of \eqref{eq:SufficentCond3TermsConvThm}
from \eqref{eq:conStTermSummary}.  This completes the proof of Lemma~~\ref{thm:ConvThm}.

\section*{Acknowledgments}
This work was partially supported by the National Science Foundation under the grants
NSF grants DMS-1004638, DMS-0906440, and by the Research Fund of Indiana University.

\footnotesize
\bibliographystyle{plain}
\bibliography{/Users/Nathan/Desktop/Work/Research/Reference/ref}
\normalsize

\noindent Arnaud Debussche\\ {\footnotesize
D\'{e}partement de Math\'{e}matiques \\
ENS Cachan Bretagne\\
Web: \url{http://w3.bretagne.ens-cachan.fr/math/people/arnaud.debussche/}\\
Email: \url{arnaud.debussche@bretagne.ens-cachan.fr} } \\[.3cm]
Nathan Glatt-Holtz\\ {\footnotesize
Department of Mathematics\\
Indiana University\\
Web: \url{http://mypage.iu.edu/\~negh/}\\
 Email: \url{negh@indiana.edu}} \\[.3cm]
Roger Temam\\ {\footnotesize
Department of Mathematics\\
Indiana University\\
Web: \url{http://mypage.iu.edu/~temam/}\\
 Email: \url{temam@indiana.edu}}

\end{document}